\def\ccite#1{~\cite{#1}}
\def\inte#1{
\displaystyle\mathop{#1\kern0pt}^\circ }
\def\Supp{\mathop{\rm Supp}\nolimits\ }
\let\pa=\partial
\let\al=\alpha
\let\d=\delta
\let\e=\varepsilon
\let\lam=\lambda
\let\s=\sigma
\let\f=\frac
\let\p=\psi
\let\D=\Delta
\let\wt=\widetilde
\let\wh=\widehat
\def\cB{{\mathcal B}}
\def\cC{{\mathcal C}}
\def\cE{{\mathcal E}}
\def\cF{{\mathcal F}}
\def\pa{\partial}
\def\dH{\dot{H}}
\def\h{{\rm h}}
\def\virgp{\raise 2pt\hbox{,}}
\def\cdotpv{\raise 2pt\hbox{;}}
\def\eqdefa{\buildrel\hbox{\footnotesize def}\over =}
\def\C{\mathop{\mathbb C\kern 0pt}\nolimits}
\def\DD{\mathop{\mathbb D\kern 0pt}\nolimits}
\def\EE{\mathop{{\mathbb E \kern 0pt}}\nolimits}
\def\K{\mathop{\mathbb K\kern 0pt}\nolimits}
\def\N{\mathop{\mathbb N\kern 0pt}\nolimits}
\def\Q{\mathop{\mathbb Q\kern 0pt}\nolimits}
\def\R{\mathop{\mathbb R\kern 0pt}\nolimits}
\def\SS{\mathop{\mathbb S\kern 0pt}\nolimits}
\def\ZZ{\mathop{\mathbb Z\kern 0pt}\nolimits}
\def\TT{\mathop{\mathbb T\kern 0pt}\nolimits}
\def\P{\mathop{\mathbb P\kern 0pt}\nolimits}
\newcommand{\la}{\lambda}
\newcommand{\Z}{{\ZZ}}
\def\dive{\mathop{\rm div}\nolimits}
\def\no{\noindent}
\def\na{\nabla}
\def\p{\partial}
\newcommand{\w}[1]{\langle {#1} \rangle}
\newcommand{\beq}{\begin{equation}}
\newcommand{\eeq}{\end{equation}}
\newcommand{\ben}{\begin{eqnarray}}
\newcommand{\een}{\end{eqnarray}}
\newcommand{\beno}{\begin{eqnarray*}}
\newcommand{\eeno}{\end{eqnarray*}}
\newcommand{\andf}{\quad\hbox{and}\quad}
\newcommand{\with}{\quad\hbox{with}\quad}
\newtheorem{defi}{Definition}[section]
\newtheorem{lem}{Lemma}[section]
\newtheorem{rmk}{Remark}[section]
\newcommand{\vv}[1]{\boldsymbol{#1}}
\def\v{{\rm v}}
\def\div{\text{div}\,}
\newtheorem*{Main Theorem}{Main Theorem}
\newtheorem{theorem}{Theorem}[section]
\newtheorem{lemma}[theorem]{Lemma}
\numberwithin{equation}{section}
\begin{document}
\title[Decay of 3D anisotropic Navier-Stokes equations]
{Enhanced dissipation for the third component of 3D anisotropic Navier-Stokes equations}

\author[L. XU]{Li Xu}
\address{School of Mathematical Sciences, Beihang University\\  100191 Beijing, China}
\email{xuliice@buaa.edu.cn}

\author[P. Zhang]{Ping Zhang}
\address{Academy of Mathematics $\&$ Systems Science
and  Hua Loo-Keng Key Laboratory of Mathematics, Chinese Academy of
Sciences, Beijing 100190, China, and School of Mathematical Sciences,
University of Chinese Academy of Sciences, Beijing 100049, China.}
\email{zp@amss.ac.cn}

\date{}

 \begin{abstract}
In this paper, we study the decay rates for the global small smooth  solutions to 3D anisotropic incompressible Navier-Stokes
equations. In particular, we prove that the horizontal components of the velocity field decay like the solutions of 2D classical Navier-Stokes equations.
While the third component of the velocity field decays as the solutions of 3D Navier-Stokes equations.  We remark that
such enhanced decay
rate for the third component is caused by the interplay between the divergence free condition of the velocity field and the horizontal
Laplacian in the anisotropic Navier-Stokes equations.
 \end{abstract}

\maketitle

\noindent {\sl Keywords:} Anisotropic Navier-Stokes equations, Littlewood-Paley theory, large time behavior

\vskip 0.2cm
\noindent {\sl AMS Subject Classification (2000):} 35Q30, 76D03


\setcounter{equation}{0}
\section{Introduction}
We investigate the large time behavior of  the global small smooth solutions to the following 3D anisotropic Navier-Stokes equations
\beq\label{ANS}\left\{\begin{aligned}
&\p_t\vv v+\vv v\cdot\na\vv v-\Delta_{\h}\vv v+\na p=\vv 0,\quad(t,x)\in\R^+\times\R^3,\\
&\div\vv v=0,\\
&\vv v|_{t=0}=\vv v_0,
\end{aligned}\right.\eeq
where $\Delta_{\h}=\p_{x_1}^2+\p_{x_2}^2$  denotes the horizontal Laplacian operator,
 $\vv v=(v^1,v^2,v^3)$ and $p$ designate the velocity and the pressure of the fluid respectively.

Systems of this type appear in geophysical fluid dynamics (see for instance \cite{CDGG, Pedlovsky}).
In fact, meteorologists often modelize turbulent diffusion by putting
a viscosity of the form: $-\mu_\h\D_\h-\mu_3\pa_3^2$,
where $\mu_\h$ and $\mu_3$ are empirical constants,
and $\mu_3$ is usually much smaller than $\mu_\h$. For simplicity, we shall take $\mu_\h=1$ and $\mu_3=0$ in this context.

Just as the classical Navier-Stokes system where the horizontal Laplacian $\D_\h$ in \eqref{ANS}
is replaced by the full Laplician $\D=\sum_{j=1}^3\p_j^2,$ it is still big open question concerning
whether or not singularity will be developed in finite time for \eqref{ANS} with general initial data.
Considering system \eqref{ANS} has
 only horizontal dissipation, it is reasonable to use  the following anisotropic Sobolev space to study the well-posedness
 of the system \eqref{ANS}:

\begin{defi}\label{defanisob}
{\sl For any $(s,s')\in\R^2$, the anisotropic Sobolev space
$\dH^{s,s'}(\R^3)$ denotes the space of homogeneous tempered distribution
$a$ such that
$$\|a\|^2_{\dH^{s,s'}} \eqdefa \int_{\R^3} |\xi_{\rm
h}|^{2s}|\xi_3|^{2s'} |\wh a (\xi)|^2d\xi <\infty\with \xi_{\rm
h}=(\xi_1,\xi_2).$$
}\end{defi}

  Chemin et al. \cite{CDGG} and Iftimie \cite{Iftimie}
 proved that \eqref{ANS} is locally well-posed
with initial data in $L^2(\R^3)\cap \dH^{0,\f12+\varepsilon}(\R^3)$ for some $\varepsilon>0$,
and is globally well-posed if in addition
\begin{equation}\label{smallCDGG}
\|\vv v_0\|_{L^2}^\varepsilon
\|\vv v_0\|_{\dH^{0,\f12+\varepsilon}}^{1-\varepsilon}\leq c
\end{equation}
for some sufficiently small constant $c$.

Notice that just as the classical Navier-Stokes system,
the system \eqref{ANS} has the following scaling invariant property:
\begin{equation}\label{NSscaling}
\vv v_\lambda(t,x)\eqdefa\lambda \vv v(\lambda^2 t,\lambda x) \andf \vv v_{0,\lambda}(x)\eqdefa \lambda \vv v_0(\lambda x),
\end{equation}
which means  that if $\vv v$ is a solution of \eqref{ANS} with initial data $\vv v_0$ on $[0,T]$, $v_\la$ determined by \eqref{NSscaling}
is also a solution of \eqref{ANS} with initial data $\vv v_{0,\lam}$ on $[0,T/\la^2]$.

It is easy  to observe that
the smallness condition \eqref{smallCDGG}  in \cite{CDGG} is scaling invariant under the scaling transformation \eqref{NSscaling},
nevertheless,  the norm of the  space $H^{0,\f12+\varepsilon}$ is not. To investigate the well-posedness of  \eqref{ANS} with initial data in the critical spaces,
we  recall the following anisotropic dyadic operators from \cite{BCD}:
\begin{equation}\begin{split}\label{defparaproduct}
&\Delta_k^{\rm h}a\eqdefa\cF^{-1}(\varphi(2^{-k}|\xi_{\rm h}|)\widehat{a}),
 \quad \Delta_\ell^{\rm v}a \eqdefa\cF^{-1}(\varphi(2^{-\ell}|\xi_3|)\widehat{a}),\\
&S^{\rm h}_ka\eqdefa\cF^{-1}(\chi(2^{-k}|\xi_{\rm h}|)\widehat{a}),
\quad\ S^{\rm v}_\ell a \eqdefa\cF^{-1}(\chi(2^{-\ell}|\xi_3|)\widehat{a}),
\end{split}\end{equation}
where  $\xi_{\rm h}=(\xi_1,\xi_2),$ $\cF a$ or $\widehat{a}$  denotes the Fourier transform of $a$,
while $\cF^{-1} a$ designates the inverse Fourier transform of $a$,
$\chi(\tau)$ and $\varphi(\tau)$ are smooth functions such that
\begin{align*}
&\Supp \varphi \subset \Bigl\{\tau \in \R\,: \, \frac34 \leq
|\tau| \leq \frac83 \Bigr\}\quad\mbox{and}\quad \forall
 \tau>0\,,\ \sum_{j\in\Z}\varphi(2^{-j}\tau)=1;\\
& \Supp \chi \subset \Bigl\{\tau \in \R\,: \, |\tau|\leq
\frac43 \Bigr\}\quad\mbox{and}\quad \forall
 \tau\in\R\,,\ \chi(\tau)+ \sum_{j\geq 0}\varphi(2^{-j}\tau)=1.
\end{align*}

\begin{defi}\label{anibesov}
{\sl We define $\cB^{0,\f12}(\R^3)$ to be the set of  homogenous tempered distribution $a$ so that
$$
\|a\|_{\cB^{0,\f12}}\eqdefa \sum_{\ell\in\Z}2^{\f{\ell}2}
\|\D_\ell^\v a\|_{L^2(\R^3)}<\infty.
$$
}\end{defi}

Paicu \cite{Paicu} proved the local well-oosedness of the system \eqref{ANS} with initial data in $\cB^{0,\f12}$ and also
global well-posedness of \eqref{ANS}
provided that $\|\vv v_0\|_{\cB^{0,\f12}}$ is sufficiently small. Chemin and the second author \cite{CZ1} introduced a critical
 anisotropic Besov-Sobolev type space with negative index and proved the global well-posedness of \eqref{ANS} with small initial data
  in this space. In particular, this result implies the global well-posedness of \eqref{ANS} with highly oscillatory initial data in the horizontal variables. Those global well-posedness results in \cite{CZ1,Paicu} were improved in \cite{PZ1, Zhang10} that
the system has a unique global solution with two components of the initial data being small enough in the critical spaces.
Very recently, Liu, Paicu and the second author \cite{LZ4} proved the global well-posedness of \eqref{ANS} with uni-directional derivative
of the initial data being sufficiently small. More precisely, as long as $\||D_\h|^{-1}\p_3\vv v_0\|_{\cB^{0,\f12}}$ is small enough,
\eqref{ANS} has a unique global solution, which in particular improves the previous global well-posedness results in \cite{Paicu, PZ1, Zhang10}.
One may check \cite{LZ4} and the references therein for details concerning the well-posedness theory of \eqref{ANS}. The authors
of \cite{LZZ2} studied  the lower bound to the lifespan of the local smooth solutions to the system \eqref{ANS}.

Lately, Ji, Wu and Yang \cite{JWY} investigated the decay rate for the global smooth solution of \eqref{ANS}. More precisely,

\begin{theorem}[Theorem 1.1 of \cite{JWY}]\label{jwythm}
{\sl Let $\f34\leq\s<1.$ Assume that
\beno
\vv v_0\in H^4(\R^3),\quad \na\cdot \vv v_0=0,\quad \vv v_0, \p_3\vv v_0\in \dH^{-\s,0}(\R^3).
\eeno
Then there is $\e>0$ such that if
\beno
\|\vv v_0\|_{H^4}+\bigl\|(\vv v_0,\p_3\vv v_0)\bigr\|_{\dH^{-\s,0}}\leq \e,
\eeno
then \eqref{ANS} has a unique solution $\vv v$ satisfying
\begin{subequations} \label{jyw}
\begin{gather}
\label{jyw1}
\|\vv v\|_{{L}^\infty(\R^+; H^{4})}+ \|\vv v\|_{L^\infty(\R^+;\dH^{-\s,0})}\leq C\e,\\
\label{jyw2}
\|\vv v(t)\|_{{L}^2}+ \|\p_3\vv v(t)\|_{L^2}\leq C\e\w{t}^{-\f\s2},\\
\label{jyw3}
\|\na_\h\vv v(t)\|_{{L}^2}\leq C\e\w{t}^{-\f{\s+1}2}.
\end{gather}
\end{subequations}
}
\end{theorem}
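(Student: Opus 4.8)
The plan is to combine an anisotropic energy method with the Fourier splitting method of Schonbek, the whole argument being organized as a time-weighted bootstrap. First I would produce the global solution together with the uniform-in-time bounds that feed the decay estimates. Running the $H^4$ energy estimate on \eqref{ANS} (apply $\p^\al$ for $|\al|\le 4$ and pair with $\p^\al\vv v$): the pressure drops out by $\dive\vv v=0$, and the nonlinear contributions $\int\p^\al(\vv v\cdot\na\vv v)\cdot\p^\al\vv v\,dx$ are handled by anisotropic product and Gagliardo--Nirenberg estimates in the spirit of \cite{CDGG,Iftimie,Paicu}, the dangerous pieces carrying too many $\p_3$'s being rewritten through $\p_3v^3=-\p_1v^1-\p_2v^2$ so that the horizontal dissipation $\|\na_\h\p^\al\vv v\|_{L^2}$ controls them; smallness of $\|\vv v_0\|_{H^4}$ then closes the estimate and yields $\|\vv v\|_{L^\infty(\R^+;H^4)}\le C\e$ together with $\int_0^\infty\|\na_\h\vv v\|_{H^4}^2\,dt\le C\e^2$. (Global existence itself could alternatively be quoted from \cite{Paicu,CZ1}, since small $H^4$ data are small in $\cB^{0,\f12}$.) In parallel I would propagate the negative-index norms: applying $|D_\h|^{-\s}$ and $|D_\h|^{-\s}\p_3$ to \eqref{ANS} and pairing against themselves --- the bilinear estimate being valid precisely because $\s<1$, so that $|D_\h|^{-\s}$ behaves like a subcritical two-dimensional operator --- gives $\|\vv v\|_{L^\infty(\R^+;\dH^{-\s,0})}+\|\p_3\vv v\|_{L^\infty(\R^+;\dH^{-\s,0})}\le C\e$.

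The $L^2$ decay of $\vv v$ itself is then immediate from Schonbek's method: one has the exact identity $\f{d}{dt}\|\vv v\|_{L^2}^2+2\|\na_\h\vv v\|_{L^2}^2=0$, and with the Fourier cut $S(t)=\{\xi:|\xi_\h|^2\le C_0/(1+t)\}$ one gets $\|\na_\h\vv v\|_{L^2}^2\ge\f{C_0}{1+t}\bigl(\|\vv v\|_{L^2}^2-\int_{S(t)}|\wh{\vv v}|^2\,d\xi\bigr)$, while on $S(t)$ the elementary bound $|\wh{\vv v}(\xi)|^2\le|\xi_\h|^{2\s}\bigl(|\xi_\h|^{-2\s}|\wh{\vv v}(\xi)|^2\bigr)$ yields $\int_{S(t)}|\wh{\vv v}|^2\,d\xi\le(C_0/(1+t))^{\s}\|\vv v\|_{\dH^{-\s,0}}^2\le C\e^2(1+t)^{-\s}$. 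A Gronwall argument with integrating factor $(1+t)^{C_0}$, $C_0>\s$, then gives $\|\vv v(t)\|_{L^2}\le C\e\w t^{-\s/2}$.

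The heart of the matter is upgrading this to the finer estimates \eqref{jyw2}--\eqref{jyw3}, which I would do by a time-weighted bootstrap. Introduce a functional of the form $\cD(T)=\sup_{0\le t\le T}\bigl(\w t^{\s}\|\p_3\vv v\|_{L^2}^2+\w t^{1+\s}\|\na_\h\vv v\|_{L^2}^2+\w t^{1+\s}\|\na_\h\p_3\vv v\|_{L^2}^2+\cdots\bigr)$, including weighted $L^2$-norms of a few horizontal-gradient and $\p_3$ quantities (there is room up to $H^4$). Differentiating \eqref{ANS} by $\p_3$, resp.\ by $\na_\h$, and pairing with $\p_3\vv v$, resp.\ $\na_\h\vv v$, again kills the pressure and produces $\f{d}{dt}\|\p_3\vv v\|_{L^2}^2+2\|\na_\h\p_3\vv v\|_{L^2}^2=-2\int(\p_3\vv v\cdot\na\vv v)\cdot\p_3\vv v\,dx$ and its horizontal analogue. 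The decisive structural point is that, once the transport piece is integrated away and $\dive\vv v=0$ is used (in particular $\p_3v^3=-\p_1v^1-\p_2v^2$), every surviving nonlinear term carries a factor that is a horizontal derivative of $\vv v$; combining this with anisotropic Gagliardo--Nirenberg inequalities such as $\|f\|_{L^4(\R^3)}^4\lesssim\|f\|_{L^2}\|\p_3 f\|_{L^2}\|\na_\h f\|_{L^2}^2$, each term can be estimated by a piece absorbable into $\|\na_\h\p_3\vv v\|_{L^2}^2$ (resp.\ $\|\na_\h^2\vv v\|_{L^2}^2$) plus a piece which, under the bootstrap ansatz $\cD(T)\le$ const and the uniform $H^4$ bound, decays strictly faster than the Schonbek source (e.g.\ $\lesssim\e\,\cD(T)^{3/2}\w t^{-1-3\s/2}$ in the $\p_3\vv v$ estimate). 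Fourier splitting --- now cutting the weights $|\xi_3|^2$, resp.\ $|\xi_\h|^2$, against $|\xi_\h|^{-2\s}$ on $S(t)$, which is exactly where the uniform $\dH^{-\s,0}$-bounds on $\vv v$ and $\p_3\vv v$ enter --- together with the integrating factor $(1+t)^{C_0}$, $C_0$ large, should then close a superlinear inequality $\cD(T)\le C\e^2+C\e\,\cD(T)^{3/2}$, whence $\cD(T)\le C\e^2$ uniformly in $T$ by continuity, giving \eqref{jyw2} and \eqref{jyw3}.

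I expect the main obstacle to lie in the $\na_\h\vv v$ (and $\na_\h\p_3\vv v$) estimate, precisely because there is no vertical viscosity: a crude bound of the nonlinear term by $\|\na\vv v\|_{L^\infty}\|\na_\h\vv v\|_{L^2}^2$ is useless (the needed decay of $\|\na\vv v\|_{L^\infty}$ is not available, as $\p_3\vv v$ is not time-integrable in $L^\infty$), and, worse, the genuinely mixed terms --- of the schematic type $\int v^3\,\p_3\vv v\cdot\D_\h\vv v\,dx$ that appear after integration by parts --- only yield the slow rate $\w t^{-\s}$ under naive estimates. One must therefore split the nonlinearity term by term, systematically using $\p_3v^3=-\p_1v^1-\p_2v^2$ and choosing for each piece the anisotropic interpolation that makes it either absorbable by the horizontal dissipation or integrable against the time weight with a definite margin --- possibly at the price of a multi-step bootstrap that first establishes an intermediate rate and then improves it. This interplay between $\dive\vv v=0$ and $\D_\h$ is exactly the mechanism responsible for the extra half-power of decay of the horizontal gradient, and, in the refinement pursued in the present paper, for the enhanced decay of the third component.
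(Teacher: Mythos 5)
You should first be aware that the paper you are reading does not prove this statement at all: it is quoted as Theorem 1.1 of \cite{JWY}, and the authors explicitly remark that the Wiegner--Schonbek Fourier splitting method ``can not be applied'' to \eqref{ANS}, the proof in \cite{JWY} proceeding instead through the integral (Duhamel) formulation of the system and a continuity argument. Your proposal is therefore built around exactly the device the paper singles out as unavailable, and while the elementary part of your splitting is not wrong --- on $S(t)=\{|\xi_\h|^2\le C_0/(1+t)\}$ the bound $\int_{S(t)}|\wh{\vv v}|^2d\xi\le (C_0/(1+t))^{\s}\|\vv v(t)\|_{\dH^{-\s,0}}^2$ is legitimate \emph{provided} $\|\vv v(t)\|_{\dH^{-\s,0}}$ stays uniformly small --- everything hinges on that proviso, and this is precisely the step you wave through. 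Your claim that applying $|D_\h|^{-\s}$ and $|D_\h|^{-\s}\p_3$ and ``pairing against themselves'' closes because ``$|D_\h|^{-\s}$ behaves like a subcritical two-dimensional operator'' ignores the term $v^3\p_3\vv v$: the vertical derivative is invisible to the horizontal dissipation and to any 2D product law, and handling it requires $\dive\vv v=0$ (to trade $\p_3v^3$ for $\dive_\h\vv v^\h$) together with anisotropic paraproduct estimates in which quantities such as $\|\p_3\vv v\|_{L^2_t(\dH^{\f12,0})}$ appear; in the present paper this is the content of Lemmas \ref{lemma for partial 3 v} and \ref{lemma for v in H -s}, and controlling that scaling-critical norm needs its own continuity argument fed by the very decay estimates you are trying to prove. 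Nothing in your sketch supplies this circle-breaking mechanism.

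A second, related gap: the hypothesis $\f34\le\s<1$ is never used in your argument, yet the admissible range of $\s$ is exactly what these nonlinear and time-integrability estimates determine (in the present paper the analogous constraint $s>\f{1+3s_1}{10(s_1-1)}$ arises from requiring $\bigl(\|\na_\h v^3\|_{L^2}\|\dive_\h\vv v^\h\|_{L^2}\bigr)^{\f{2(s_1-1)}{3s_1-2}}$ to be integrable in time, and the authors state that reaching all $s\in(0,1)$ is open). An argument that works verbatim for every $\s<1$ ``with room to spare'' cannot be complete as stated. Finally, for the derivative decay \eqref{jyw2}--\eqref{jyw3} you correctly identify the dangerous mixed terms of the type $\int v^3\,\p_3\vv v\cdot\D_\h\vv v\,dx$ and concede that naive bounds only give the slow rate, but you resolve this only by appeal to an unspecified ``multi-step bootstrap''; that is the heart of the proof, not a detail. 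To make the scheme rigorous you would need either the term-by-term anisotropic Littlewood--Paley analysis of the kind carried out in Section 3 of this paper (energy method with negative-index interpolation replacing the explicit Fourier cut), or the integral-formulation bootstrap of \cite{JWY}; as written, the proposal is a plausible strategy outline rather than a proof.
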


We remark that although the linear part of the system \eqref{ANS} is 2D Stokes system, yet the Fourier splitting method
introduced by  Wiegner in\ccite {Wiegner}
(see also the works \ccite{HM06, Schonbek, Schonbek2}) to
study the decay-in-time
 estimates for 2D
   Navier-Stokes equation  can not be applied to investigate the decay rates for the global small solutions of \eqref{ANS}.
 Instead the authors of \cite{JWY} employed the integral formulation of the system \eqref{ANS} and used a continuous argument
 to prove Theorem \ref{jwythm}.

Our first observation in this paper is that under the critical smallness condition \eqref{initial ansatz}, we can propagate the regularity of
the solution to \eqref{ANS} in the
vertical variable.
We can also propagate the full high regularity of the solution to \eqref{ANS}. Precisely,
our first result  states as follows:

\begin{theorem}\label{global existence thm}
{\sl Let $s\geq 1$ and $\vv v_0\in \bigl(\cB^{0,\f12}\cap \dH^s\bigr)(\R^3)$ be a solenoidal vector field which satisfies
\beq\label{initial ansatz}
\|\vv v_0\|_{\cB^{0,\f12}}\leq c_0,
\eeq
for some $c_0>0$ sufficiently small. Then  the system \eqref{ANS} has a unique global solution $\vv v$  so that
\beno
\vv v\in C([0,\infty);(\cB^{0,\f12}\cap \dH^s)(\R^3))\andf  \na_{\h}\vv v\in L^2(\R_+; (\cB^{0,\f12}\cap \dH^s)(\R^3)),
\eeno
and
\begin{subequations} \label{S3eq0}
\begin{gather}
\label{S3eq2}
\|\vv v\|_{{L}^\infty(\R^+;\dH^{0,s})}^2+ \|\na_\h\vv v\|_{L^2(\R^+;\dH^{0,s})}^2\leq C\|\vv v_0\|_{\dH^{0,s}}^2,\\
\label{S3eq2a}
\|\vv v\|_{{L}^\infty(\R^+;\dH^{s})}^2+\|\na_\h\vv v\|_{L^2(\R^+;\dH^{s})}^2\leq C\|\vv v_0\|_{\dH^{s}}^2.
\end{gather}
\end{subequations}
Here and in what follows $C>0$ denotes a universal constant that may vary from line to line.
}
\end{theorem}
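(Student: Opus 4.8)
The global existence and uniqueness in the critical space $\cB^{0,\f12}$ under the sole hypothesis \eqref{initial ansatz} is essentially the theorem of Paicu~\cite{Paicu}: it yields a unique global solution with $\vv v\in C([0,\infty);\cB^{0,\f12})$ and $\na_\h\vv v\in L^2(\R^+;\cB^{0,\f12})$, together with the scaling invariant bound
\beq
\|\vv v\|_{L^\infty(\R^+;\cB^{0,\f12})}^2+\|\na_\h\vv v\|_{L^2(\R^+;\cB^{0,\f12})}^2\leq C\|\vv v_0\|_{\cB^{0,\f12}}^2\leq Cc_0^2.
\eeq
Thus the real content of the theorem is the propagation of the extra regularity, i.e. \eqref{S3eq2} and \eqref{S3eq2a}. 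I would establish these as a priori bounds on the solution, which is legitimate after a routine smoothing/approximation procedure (for $s\geq1$ all the norms at play are subcritical), and then recover the stated functional setting and time continuity by the usual arguments.

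\emph{Proof of \eqref{S3eq2}.} Apply the vertical dyadic block $\D_\ell^\v$ to \eqref{ANS} and take the $L^2(\R^3)$ inner product with $\D_\ell^\v\vv v$; the pressure drops out by $\dive\vv v=0$, whence
\beq
\f12\f{d}{dt}\|\D_\ell^\v\vv v\|_{L^2}^2+\|\na_\h\D_\ell^\v\vv v\|_{L^2}^2=-\bigl(\D_\ell^\v(\vv v\cdot\na\vv v)\bigm|\D_\ell^\v\vv v\bigr)_{L^2}.
\eeq
Using $\dive\vv v=0$ I rewrite $\vv v\cdot\na\vv v=\dive_\h(\vv v_\h\otimes\vv v)+\p_3(v^3\vv v)$, and — this is the one genuinely structural input — I use the relation $v^3=-\p_3^{-1}\dive_\h\vv v_\h$, which shows that the third component always carries one horizontal derivative (so the dangerous vertical derivative in $\p_3(v^3\vv v)$ can, after integration by parts, be traded for a horizontal one). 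Expanding the products by Bony's decomposition in the vertical variable and invoking the anisotropic product laws together with the critical bound above, the nonlinear term should be controlled, schematically, by
\beno
\f1{10}d_\ell^2 2^{-2\ell s}\|\na_\h\vv v\|_{\dH^{0,s}}^2+Cd_\ell^2 2^{-2\ell s}\|\na_\h\vv v\|_{\cB^{0,\f12}}^2\|\vv v\|_{\dH^{0,s}}^2,
\eeno
where $(d_\ell)_{\ell\in\Z}$ is a generic nonnegative sequence of unit $\ell^1$ norm (depending on $t$). Multiplying by $2^{2\ell s}$, summing over $\ell$, integrating on $[0,t]$, absorbing the first term into the left-hand side, and applying Gronwall's lemma with $\int_0^\infty\|\na_\h\vv v\|_{\cB^{0,\f12}}^2\,d\tau$ finite (indeed $\leq Cc_0^2$ by the critical bound) yields \eqref{S3eq2}.

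\emph{Proof of \eqref{S3eq2a}.} This is subtler because there is no vertical diffusion: the isotropic energy estimate with $\ppd_k$,
\beq
\f12\f{d}{dt}\|\ppd_k\vv v\|_{L^2}^2+\|\na_\h\ppd_k\vv v\|_{L^2}^2=-\bigl(\ppd_k(\vv v\cdot\na\vv v)\bigm|\ppd_k\vv v\bigr)_{L^2},
\eeq
produces only the horizontal part $\|\na_\h\ppd_k\vv v\|_{L^2}^2$ of the full gradient, so $\dH^s$ cannot be closed on its own. The remedy is to feed in \eqref{S3eq2}: when estimating the nonlinearity I route — again using $v^3=-\p_3^{-1}\dive_\h\vv v_\h$ and integration by parts — every genuine $\p_3$-derivative onto a factor measured by the vertical regularity $\dH^{0,s}$, which by \eqref{S3eq2} is already controlled (and, crucially, bounded on $\R^+$ in time), leaving on the other factors either a horizontal derivative matched to the dissipation or a $\cB^{0,\f12}$ norm. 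This should give a bound of the form
\beno
\f1{10}d_k^2 2^{-2ks}\|\na_\h\vv v\|_{\dH^s}^2+Cd_k^2 2^{-2ks}\|\na_\h\vv v\|_{\cB^{0,\f12}}^2\bigl(\|\vv v\|_{\dH^s}^2+\|\vv v\|_{\dH^{0,s}}^2\bigr),
\eeno
and then multiplying by $2^{2ks}$, summing, integrating, absorbing the first term, and applying Gronwall's lemma (using that $\|\vv v\|_{\dH^{0,s}}^2\leq C\|\vv v_0\|_{\dH^{0,s}}^2$ is bounded by \eqref{S3eq2}, and that $\|\na_\h\vv v\|_{\cB^{0,\f12}}^2\in L^1(\R^+)$) yields \eqref{S3eq2a}. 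Combining the critical bound with \eqref{S3eq2}--\eqref{S3eq2a} then gives $\vv v\in C([0,\infty);\cB^{0,\f12}\cap\dH^s)$ and $\na_\h\vv v\in L^2(\R^+;\cB^{0,\f12}\cap\dH^s)$.

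\emph{Main obstacle.} The crux is precisely the lack of vertical viscosity: every vertical derivative appearing in the transport term $\vv v\cdot\na\vv v$ has to be disposed of — via the divergence-free relation $v^3=-\p_3^{-1}\dive_\h\vv v_\h$ and integration by parts — either onto a factor absorbed by the vertical regularity $\dH^{0,s}$ or onto a horizontal derivative covered by the horizontal dissipation. Arranging the anisotropic paraproduct bookkeeping so that no uncontrolled $\p_3$ survives, while keeping the remaining coefficients small or time-integrable so that Young's and Gronwall's inequalities close the estimates, is the main point; moreover the $\dH^{0,s}$ bound has to be proved first and then used as an input to the $\dH^s$ bound, which otherwise could not be closed.
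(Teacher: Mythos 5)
Your overall skeleton is the one the paper uses: quote Paicu for the global solution with the critical bound \eqref{S3eq1}, propagate the vertical regularity $\dH^{0,s}$ first by an energy estimate on $\D_\ell^\v\vv v$ with Bony decomposition in $x_3$ and the trick $\pa_3v^3=-\dive_\h\vv v^\h$, and then feed that bound into the estimate for the remaining regularity, closing both with Gronwall and the time-integrability of $\|\na_\h\vv v\|_{\cB^{0,\f12}}^2$. Your first step is essentially the paper's Lemma~\ref{S3lem1} argument and is fine as a sketch.

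The gap is in your second step, and it is not mere bookkeeping. You assert, for the isotropic blocks $\ppd_k$, that the nonlinearity is bounded by an absorbable piece plus $Cd_k^22^{-2ks}\|\na_\h\vv v\|_{\cB^{0,\f12}}^2\bigl(\|\vv v\|_{\dH^s}^2+\|\vv v\|_{\dH^{0,s}}^2\bigr)$, on the grounds that ``every genuine $\pa_3$-derivative'' can be routed onto a factor measured in $\dH^{0,s}$. That is not what happens. In the term $T_{\pa_3\vv v}v^3$ (low-frequency $\pa_3\vv v$, high-frequency $v^3$), integration by parts in $x_3$ produces, besides the harmless piece with $\pa_3v^3=-\dive_\h\vv v^\h$, a piece where $\pa_3$ lands on the test block, i.e. a pairing against $\ppd_k\pa_3\vv v$; this derivative sits on the full-regularity unknown, not on anything controlled by $\dH^{0,s}$. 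In the paper's horizontal-block version the loss $2^{k}$ from this $\pa_3$ is compensated by gaining $2^{-k'}$ through horizontal Bernstein on the high-frequency factor $\D_{k'}^\h v^3$, and the leftover is exactly a factor $\|\pa_3\vv v\|_{\dH^{s-1,0}}^{\f12}\|\pa_3\vv v\|_{\dH^{s,0}}^{\f12}$, which is then absorbed via the interpolation $\|\pa_3 a\|_{\dH^{s-1,0}}\leq\|a\|_{\dH^{s,0}}^{\f{s-1}{s}}\|a\|_{\dH^{0,s}}^{\f1s}$ --- the only place the hypothesis $s\geq1$ enters, and it enters essentially. Your sketch never uses $s\geq1$ and your claimed right-hand side contains no such $\pa_3$-terms, which signals that this term has not been handled. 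Moreover, with isotropic blocks the compensation step itself breaks down: on $\ppd_k$ one cannot trade a factor $2^{-k}$ for $\na_\h$ (the frequency may be mostly vertical), nor is $\pa_3^{-1}\dive_\h$ bounded on an isotropic annulus, so the symbol $|\xi_\h|/|\xi_3|$ coming from $v^3=-\pa_3^{-1}\dive_\h\vv v^\h$ is uncontrolled there. To make your route work you would have to further split each isotropic block according to $|\xi_3|\gtrless|\xi_\h|$, at which point you are effectively back to the paper's strategy of estimating $\dH^{s,0}$ with horizontal blocks (Lemma~\ref{S3lem2}), combining with \eqref{S3eq2}, and using the $s\geq1$ interpolation; as written, the estimate you display is unjustified and the most dangerous term does not close.
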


\medskip

Our second  result is concerned with the large time behavior of the global small solutions to the system \eqref{ANS}.

\begin{theorem}\label{decay theorem}
{\sl Let {$s_1>2$} and $s\in \bigl(\f{1+3s_1}{10(s_1-1)},1\bigr).$
 Let $\vv v_0\in \bigl(\dH^{0,s_1}\cap \dot{H}^{-s,0}\cap\dH^{-s,-\f{s}{2}-\f14}\bigr)(\R^3)$ with $\p_3\vv v_0\in\dot{H}^{-\f12,0}(\R^3).
$  We denote
\beq \label{S1eq1}
A_s\eqdefa\|\vv v_0\|_{L^2}^2+\|\vv v_0\|_{\dH^{-s,0}}^2 \andf B_s\eqdefa\|\vv v_0^3\|_{L^2}^2+ \|\vv v_0\|_{\dH^{-s,-\f{s}{2}-\f14}}^2+A_s^{\f32}.
\eeq
Then there exists $\varepsilon_0>0$ such that if besides \eqref{initial ansatz}, there hold in addition
\beq\label{initial ansatz a}
A_sB_s\leq \e_0 \andf
\mathcal{E}_0\eqdefa \|\p_3\vv v_0\|_{ \dot{H}^{-\f12,0}}^2+\|\vv v_0\|_{\dH^{0,s_1}}^2(A_sB_s)^{\f{s_1-1}{3s_1-2}}\leq \e_0.
\eeq
Then \eqref{ANS} has a unique global solution so that
\begin{itemize}
  \item[(1)] (propagation the regularity)
\begin{subequations} \label{S1eq2}
\begin{gather}
\label{S1eq2a} \|\vv v\|_{L^\infty(\R^+; \dot{H}^{-s,0})}^2+\|\na_{\h}\vv v\|_{L^2(\R^+; \dot{H}^{-s,0})}^2\leq C\|\vv v_0\|_{ \dot{H}^{-s,0}}^2,\\
\label{S1eq2b}
\|\vv v\|_{{L}^\infty(\R^+;\dH^{0,s_1})}^2+ \|\na_\h\vv v\|_{L^2(\R^+;\dH^{0,s_1})}^2\leq C\|\vv v_0\|_{\dH^{0,s_1}}^2,\\
\label{S1eq2c}
\|\p_3\vv v\|_{L^\infty(\R^+; \dot{H}^{-\f12,0})}^2+\|\p_3\vv v\|_{ L^2(\R^+;\dot{H}^{\f12,0})}^2\leq C\cE_0,
\end{gather}
\end{subequations}

\item[(2)] (decay estimates)
\begin{subequations} \label{S1eq3}
\begin{gather}
\label{S1eq3a}
\|\vv v(t)\|_{L^2}^2+t\|\na_\h\vv v(t)\|_{L^2}^2
\leq  C A_s\w{t}^{-s},\\
\label{S1eq3b}
\|\p_3\vv v(t)\|_{L^2}^2\leq e\bigl(\|\p_3\vv v_0\|_{L^2}^2+\cE_0
\bigr)\w{t}^{-\f12},
\end{gather}
\end{subequations}

\item[(3)] (enhanced dissipation for the third component)
\beq\label{S1eq4}
\|v^3(t)\|_{L^2}^2+t\|\na_\h v^3(t)\|_{L^2}^2\leq CB_s\w{t}^{-\f32s}t^{-\f14}
\eeq
\end{itemize}
}
\end{theorem}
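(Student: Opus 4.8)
My plan is to combine anisotropic energy estimates with a Fourier–splitting scheme tailored to the horizontal dissipation, the decisive new ingredient for the third component being the relation $\p_3 v^3=-\div_\h v_\h$ forced by $\div\vv v=0$, which slaves $v^3$ to horizontal derivatives of the faster–decaying $v_\h$. \emph{Step 1 (global solution and propagation, i.e. \eqref{S1eq2}).} Global existence under \eqref{initial ansatz} comes from the well–posedness theory recalled above (\cite{Paicu}, or Theorem \ref{global existence thm}). For \eqref{S1eq2a}, \eqref{S1eq2b}, \eqref{S1eq2c} I would run energy estimates in $\dH^{-s,0}$, in $\dH^{0,s_1}$, and — after applying $\p_3$ to \eqref{ANS} — in $\dH^{-\f12,0}$: in each case the nonlinear term is handled with the anisotropic Bony decomposition, every dangerous vertical derivative being traded for a horizontal one via $\p_3 v^3=-\div_\h v_\h$ so that it is absorbed by $\|\na_\h\cdot\|$ up to a coefficient whose time integral is controlled by the small $\cB^{0,\f12}$–norm and by $A_sB_s,\ \cE_0$. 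Interpolating the $\p_3\vv v$ bound between $\dH^{-\f12,0}$ and $\dH^{0,s_1}$ — which is exactly what produces the factor $(A_sB_s)^{\f{s_1-1}{3s_1-2}}$ in $\cE_0$ — then controls $\p_3\vv v$ in the intermediate spaces needed below.

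\emph{Step 2 ($L^2$–decay of $\vv v$ and of $\p_3\vv v$, i.e. \eqref{S1eq3}).} The pressure and the transport term do not enter the $L^2$ balance, so $\frac{d}{dt}\|\vv v\|_{L^2}^2+2\|\na_\h\vv v\|_{L^2}^2=0$ exactly. Running the Fourier–splitting argument with horizontal radius $R(t)^2\simeq\w t^{-1}$ and using, on $\{|\xi_\h|\le R(t)\}$, the bound $\int|\hat{\vv v}|^2\,d\xi\le R(t)^{2s}\|\vv v\|_{\dH^{-s,0}}^2\lesssim A_s\w t^{-s}$ from \eqref{S1eq2a}, then multiplying by $\w t^{\alpha}$ with $\alpha$ large and integrating, yields $\|\vv v(t)\|_{L^2}^2\lesssim A_s\w t^{-s}$; a time–weighted energy estimate upgrades this to $t\|\na_\h\vv v(t)\|_{L^2}^2\lesssim A_s\w t^{-s}$. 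Repeating the scheme on $\p_3\vv v$ — the source $(\p_3\vv v\cdot\na\vv v\mid\p_3\vv v)$ absorbed as in Step 1, and the low–frequency piece bounded by $R(t)\|\p_3\vv v\|_{\dH^{-\f12,0}}^2\lesssim\cE_0\,R(t)$ — gives \eqref{S1eq3b}, the constant $e$ issuing from exponentiating the (small) time integral of the nonlinear coefficient.

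\emph{Step 3 (enhanced decay of $v^3$, i.e. \eqref{S1eq4} — the crux).} For the third equation $\p_t v^3+\vv v\cdot\na v^3-\Delta_\h v^3+\p_3 p=0$ the $L^2$ balance reads $\frac{d}{dt}\|v^3\|_{L^2}^2+2\|\na_\h v^3\|_{L^2}^2=-2(\p_3 p\mid v^3)=-2(p\mid\div_\h v_\h)$, the transport term dropping by $\div\vv v=0$; so the only forcing is the pressure, and it already carries a horizontal derivative of $v_\h$. Estimating $p$ from $-\Delta p=\div\div(\vv v\otimes\vv v)$ and sorting $\vv v\otimes\vv v$ by the number of "$3$"–indices, the dominant block is $\p_i\p_j\Delta^{-1}(v^iv^j)$ with $i,j\in\{1,2\}$, bounded on $L^2$ by $\|v_\h\|_{L^4}^2\lesssim\|v_\h\|_{L^2}^{1/2}\|\na_\h v_\h\|_{L^2}\|\p_3 v_\h\|_{L^2}^{1/2}$ (anisotropic Gagliardo–Nirenberg), each factor decaying by Step 2, while the remaining blocks are either quadratically small in $v^3$ or carry $\p_3\vv v$; hence $(p\mid\div_\h v_\h)$ is an integrable, fast–decaying remainder. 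I would then Fourier–split for $v^3$ with $R(t)^2\simeq\w t^{-1}$ and control the horizontal–low–frequency part by a further split in $\xi_3$: on $\{|\xi_3|\le\rho\}$ one bounds $\int|\widehat{v^3}|^2\le R^{2s}\rho^{s+\f12}\|v^3\|_{\dH^{-s,-\f s2-\f14}}^2$ (here the data norm $\|\vv v_0\|_{\dH^{-s,-\f s2-\f14}}$ enters through the integral formulation — the linear evolution $e^{t\Delta_\h}v_0^3$ interpolated against the high vertical regularity $\dH^{0,s_1}$ where the factor $|\xi_3|^{-1}$ implicit in $\p_3 v^3=-\div_\h v_\h$ is large — bringing in $B_s$), while on $\{|\xi_3|>\rho\}$ one uses $\int|\widehat{v^3}|^2\le\rho^{-2}\|\p_3 v^3\|_{L^2}^2=\rho^{-2}\|\div_\h v_\h\|_{L^2}^2\lesssim\rho^{-2}\|\na_\h v_\h\|_{L^2}^2\lesssim\rho^{-2}A_s t^{-1}\w t^{-s}$ by Step 2 — this last chain being exactly the interplay of the divergence–free condition and the horizontal Laplacian that manufactures the extra decay. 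Optimizing over $\rho$ and running the Gronwall argument (multiplying by $\w t^{\alpha}t^{\beta}$ and integrating) produces \eqref{S1eq4}, the $t\|\na_\h v^3(t)\|_{L^2}^2$ term coming from a time–weighted variant.

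\emph{Main obstacle.} The hard part will be the bookkeeping in Step 3: one has to balance three competing gains — the $\w t^{-1-s}$ coming from $\p_3 v^3=-\div_\h v_\h$, the anisotropic weight $-\f s2-\f14$ in $\dH^{-s,-\f s2-\f14}$ (itself coaxed out by interpolating between the div–free gain of a horizontal derivative and the high vertical regularity $\dH^{0,s_1}$, to absorb the $|\xi_3|^{-1}$ hidden in that gain), and the decay of the quadratic pressure source — and it is precisely this matching that forces the admissible range $s\in\bigl(\f{1+3s_1}{10(s_1-1)},1\bigr)$ together with the particular forms of $B_s$ and $\cE_0$. A secondary difficulty is propagating several negative–order anisotropic norms at once while assuming smallness only of the products $A_sB_s$ and $\cE_0$ rather than of each norm separately; this I would handle by a bootstrap driven by the already–small $\cB^{0,\f12}$–norm.
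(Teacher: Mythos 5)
Your Steps 1--2 are broadly consistent with what the paper does (the paper derives the $\w{t}^{-s}$ decay by interpolating $\|\vv v\|_{L^2}\leq\|\vv v\|_{\dH^{-s,0}}^{\f1{1+s}}\|\na_\h\vv v\|_{L^2}^{\f{s}{1+s}}$ into the energy identity rather than by Fourier splitting, but with the propagated $\dH^{-s,0}$ bound and the bootstrap on $\|\p_3\vv v\|_{L^\infty_t(\dH^{-\f12,0})}+\|\p_3\vv v\|_{L^2_t(\dH^{\f12,0})}$ the two routes are essentially equivalent). The genuine gap is in your Step 3, in two places. First, your low-frequency bound $\int_{|\xi_\h|\le R,\,|\xi_3|\le\rho}|\widehat{v^3}(t)|^2\le R^{2s}\rho^{s+\f12}\|v^3(t)\|_{\dH^{-s,-\f s2-\f14}}^2$ requires a uniform-in-time bound on the solution in the negative anisotropic norm $\dH^{-s,-\f s2-\f14}$, i.e.\ negative regularity in the vertical variable. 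Nothing in your scheme (nor in the theorem's hypotheses) provides such a propagation, and it is not routine: there is no vertical dissipation, and product estimates at negative vertical regularity are exactly what one wants to avoid. The paper sidesteps this entirely by writing $v^3$ through the Duhamel formula \eqref{expression of v 3}, so that the norm $\dH^{-s,-\f s2-\f14}$ is applied only to the initial data in the linear part $e^{t\D_\h}v_0^3$ (where $\dive\vv v_0=0$ converts $\widehat v_0^3$ into $\xi_\h\cdot\widehat{\vv v}_0^\h/\xi_3$ on $\{|\xi_\h|\le|\xi_3|\}$), and pairs the nonlinear Duhamel terms with $\widehat{v}^3$, integrating by parts in $\xi_3$ so that the pairing lands on $\widehat{\na_\h\vv v}(t)$, whose decay $\w{t}^{-\f s2}t^{-\f12}$ supplies the extra factor.

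Second, even granting the propagation of $\|v^3(t)\|_{\dH^{-s,-\f s2-\f14}}$, your split in $\xi_3$ does not produce the claimed rate: the high-vertical-frequency piece is estimated by $\rho^{-2}\|\div_\h\vv v^\h\|_{L^2}^2\lesssim\rho^{-2}A_s\w{t}^{-s}t^{-1}$, and optimizing $\rho$ against $R^{2s}\rho^{s+\f12}$ with $R^2\simeq t^{-1}$ gives at best $t^{-s-\f{2s+1}{2s+5}}$, which is strictly weaker than $\w{t}^{-\f32 s}t^{-\f14}$ for every $s\in(0,1)$ (matching the exponents would force $s\le-\f12$). Relatedly, dismissing the pressure term $(p\,|\,\div_\h\vv v^\h)$ as an "integrable fast-decaying remainder" is circular: in the differential $L^2$ balance for $v^3$ this forcing is the borderline contribution (it is what generates the $A_s^{\f32}$ in $B_s$), and without already knowing the enhanced decay of $\|v^3(t/2)\|_{L^2}$ the time-integrated inequality only returns the $\w{t}^{-s}$ rate; the paper uses such a differential estimate only in its Step 5, for $\na_\h v^3$, after the Duhamel argument has produced \eqref{S1eq4} for $\|v^3\|_{L^2}$. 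A smaller misattribution: the restriction $s>\f{1+3s_1}{10(s_1-1)}$ does not come from the bookkeeping of the $v^3$ decay but from closing the bootstrap for $\|\p_3\vv v\|_{\dH^{-\f12,0}}$, where one needs $\int_0^\infty\bigl(\|\na_\h v^3\|_{L^2}\|\div_\h\vv v^\h\|_{L^2}\bigr)^{\f{2(s_1-1)}{3s_1-2}}dt<\infty$, i.e.\ $\bigl(\f98+\f54 s\bigr)\f{2(s_1-1)}{3s_1-2}>1$.
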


\begin{rmk}
\begin{itemize}
  \item[(1)] The scaling invariant norm $\|\p_3\vv v\|_{L^2_t(\dH^{\f12,0})}$ is  crucial to propagate
the { negative } horizontal regularity of the solution to the system \eqref{ANS} (see \eqref{Estimate for v in H -s} below). That is the reason why we need the smallness condition on $\p_3\vv v_0$ in $\dH^{-\f12,0}(\R^3).$

\item[(2)] $s_1=4$ in Theorem \ref{decay theorem} corresponds to the assumption: $\vv v_0\in H^4(\R^3)$ in Theorem \ref{jwythm}.
Yet with $s_1=4,$ we have $s\in \left(\f{13}{30},1\right)$ in Theorem \ref{decay theorem}, which improves the assumption that $\s\in \left[\f34,1\right)$ in Theorem \ref{jwythm}. We do not know if the above result can be extended to any $s\in (0,1)$ as for the decay rates of
 classical 2D Navier-Stokes equations
(see for instance \cite{Wiegner}).

\item[(3)] We observe from \eqref{S1eq4} that the third component $v^3(t)$  of the $\vv v(t)$ decays faster than the horizontal components $\vv v^\h(t).$
This is purely due to the special structure of the third equation of \eqref{ANS}, precisely, \eqref{expression of v 3}, and the divergence
free condition of $\vv v.$ If we replace the norm
$\|\vv v_0\|_{\dH^{-s,-\f{s}{2}-\f14}}$ in Theorem \ref{decay theorem} by $\|\vv v_0\|_{\dH^{-s,-\f{s}{2}}},$ then  the solution $\vv v(t)$ decays according to
\beq\label{S1eq}
\|v^3(t)\|_{L^2}^2\leq CB_s\w{t}^{-\f32s} \andf \|\na_\h v^3(t)\|_{L^2}^2\leq CB_s\w{t}^{-\f32s}t^{-1},
\eeq
which is exactly as the decay rate for the global solutions of classical  3D Navier-Stokes system (see for instance \cite{Schonbek}).

\item[(4)] In general, the interplay between transport $\vv u\cdot\na$ and the diffusion of the following equation may cause the solution
$f$ decay more rapidly than the diffusion alone (see for instance \cite{CKRZ,CD20, Wei21})
\beno
\p_tf+\vv u\cdot\na f-\nu\D f=0.
\eeno
Here the enhanced decay rate of the third component $v^3$ is caused by interplay bewteen the divergence free condition $\p_1v^1+\p_2v^2+\p_3v^3=0,$
which transports the horizontal regularity of $\vv v^\h$ to the vertical regularity of $v^3,$ and the horizonal Laplacian in the $\vv v$ equations in \eqref{ANS}. That is the reason why we borrow the word "enhanced dissipation" from \cite{CKRZ,CD20, Wei21}.

\end{itemize}
\end{rmk}

\medskip

Let us end this section with some notations that will be used
throughout this paper.

\noindent{\bf Notations:}   Let $A, B$ be two operators, we denote
$[A;B]=AB-BA,$ the commutator between $A$ and $B$. For $a\lesssim b$,
we mean that there is a uniform constant $C,$ which may be different in each occurrence, such that $a\leq Cb$. We shall denote by~$(a|b)$
the $L^2(\R^3)$ inner product of $a$ and $b.$
 $\left(c_k\right)_{k\in\Z}$ (resp. $\left(c_k(t)\right)_{k\in\Z}$) designates a  generic elements on the unit
sphere of $\ell^2(\Z)$, i.e. $\sum_{k\in\Z}c^2_k=1$ (resp.  $\sum_{k\in\Z}c^2_k(t)=1$).
 Finally, we
denote $L^r_T(L^p_\h(L^q_\v))$ to be the space $L^r([0,T];
L^p(\R_{x_1}\times\R_{x_2}; L^q(\R_{x_3}))),$ and $\na_\h\eqdefa (\pa_{x_1},\pa_{x_2}),$ $\dive_\h=
\na_\h\cdot.$

\medskip

\setcounter{equation}{0}\label{Sect3}

\section{The propagation of regularities to Paicu's solution}

In this section, we shall prove the propagation of regularities for the solution obtained
by Paicu in \cite{Paicu} based on the estimate \eqref{S3eq1} below. Namely,
we are going to present the proof of Theorem \ref{global existence thm}.

\begin{proof}[Proof of Theorem \ref{global existence thm}] For simplicity, we just present the {\it a priori}
estimates for smooth enough solutions of \eqref{ANS}. Indeed
under the assumption of \eqref{initial ansatz}, we deduce from \cite{Paicu} that the system \eqref{ANS} has a unique global
solution $\vv v$ so that
\beq \label{S3eq1}
\|\vv v\|_{L^\infty(\R^+;\cB^{0,\f12})}+\|\na_\h\vv v\|_{L^2(\R^+;\cB^{0,\f12})}\leq C\|\vv v_0\|_{\cB^{0,\f12}}.
\eeq

With the estimate \eqref{S3eq1}, we can prove the propagation of regularity of $\vv v$ in the vertical variable.  We first get, by applying $\D_\ell^\v$ to the $\vv v$ equation of \eqref{ANS} and taking $L^2$
inner product of the resulting equation with $\D_\ell^\v \vv v,$ that
\beq\label{S3eq3}
\f12\f{d}{dt}\|\D_\ell^\v \vv v(t)\|_{L^2}^2+\|\na_\h\D_\ell^\v\vv v\|_{L^2}^2=
-\left(\D_\ell^\v(\vv v\cdot\na\vv v)\ |\ \D_\ell^\v\vv v\right).
\eeq

The estimate of term on the right-hand side (r.h.s.) of \eqref{S3eq3} relies on the following lemma:

\begin{lem}\label{S3lem1}
{\sl Let $s>0,$  one has
\beq \label{S3eq3a}
\begin{split}
\bigl|\left(\D_\ell^\v(\vv v\cdot\na\vv v)\ |\ \D_\ell^\v\vv v\right)\bigr|\lesssim c_\ell^2(t)2^{-2\ell s}
\Bigl(&\|\na_\h \vv v\|_{\cB^{0,\f12}}\|\vv v\|_{\dH^{0,s}}
\|\na_\h\vv v\|_{\dH^{0,s}}\\
&+\|\vv v\|_{\cB^{0,\f12}}^{\f12}\|\na_\h \vv v\|_{\cB^{0,\f12}}^{\f12}\|\vv v\|_{\dH^{0,s}}^{\f12}
\|\na_\h\vv v\|_{\dH^{0,s}}^{\f32}\Bigr).
\end{split}
\eeq
Here and in all that follows, we shall always denote $\bigl(c_\ell(t)\bigr)_{\ell\in\Z}$ to be a generic element of $\ell^2(\Z)$ so that
$\sum_{\ell\in\Z}c_\ell^2(t)=1.$
}
\end{lem}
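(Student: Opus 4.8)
The plan is to estimate the trilinear term $\bigl(\D_\ell^\v(\vv v\cdot\na\vv v)\mid \D_\ell^\v\vv v\bigr)$ by exploiting the divergence-free condition to rewrite it in a more symmetric form, and then to apply Bony's anisotropic paradifferential decomposition in the vertical variable. First I would use $\dive\vv v=0$ to write $\vv v\cdot\na\vv v=\dive(\vv v\otimes\vv v)$, so that after taking the $L^2$ inner product with $\D_\ell^\v\vv v$ and integrating by parts, only the horizontal derivatives contribute favourably; more precisely, I would split $\vv v\cdot\na\vv v = \vv v^\h\cdot\na_\h\vv v + v^3\p_3\vv v$ and treat the two pieces. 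For the vertical derivative piece $v^3\p_3\vv v$, the key is to rewrite $\p_3\vv v = -\dive_\h\vv v^\h$ via the divergence-free condition when acting on the third component, or more systematically to put the $\p_3$ onto $v^3$ using $\p_3 v^3 = -\dive_\h\vv v^\h$; this is exactly the mechanism that converts a vertical derivative into a horizontal one, which is what makes the right-hand side of \eqref{S3eq3a} involve only $\na_\h\vv v$ and not $\p_3\vv v$.

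Next I would perform the anisotropic Bony decomposition in the vertical variable on each bilinear term, writing $fg = T^\v_f g + T^\v_g f + R^\v(f,g)$ with the vertical paraproducts and remainder built from the operators $\D_\ell^\v$, $S^\ell_\v$. Applying $\D_\ell^\v$ and using the standard support properties (that $\D_\ell^\v T^\v_f g$ is supported in frequencies $|\xi_3|\sim 2^\ell$, forcing near-diagonal interactions, and that the remainder term couples high-high to low frequencies) I would reduce to estimating sums of the form $\sum 2^{\ell'-\ell}\cdots$ over dyadic blocks. At each step I would estimate the low-frequency factor in $L^\infty_\v$ (or $L^\infty_\h$) via Bernstein's inequality in the appropriate variable, and the high-frequency factors in $L^2$; the choice of which factor to put in $L^\infty$ horizontally versus vertically is what produces the two distinct terms on the right-hand side — the first, "linear-looking" term comes from putting $\na_\h\vv v$ in $L^2_\h(L^\infty_\v)$ controlled by $\|\na_\h\vv v\|_{\cB^{0,1/2}}$, and the second, "$3/2$-power" term arises when one needs an extra horizontal derivative and borrows it from $\na_\h\vv v$ at the cost of an interpolation, using $\|f\|_{L^\infty_\h}\lesssim \|f\|_{L^2_\h}^{1/2}\|\na_\h f\|_{L^2_\h}^{1/2}$ in two horizontal dimensions together with the $L^\infty_\v$ bound via $\cB^{0,1/2}$.

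Then I would sum over $\ell$: after factoring out $2^{-2\ell s}c_\ell^2(t)$ with $\sum_\ell c_\ell^2(t)=1$, the remaining sums are handled by Young's inequality for series convolutions, using that the various $2^{\pm(\ell-\ell')s}$-weighted sequences are in $\ell^1$. I would also need the elementary fact that $\|\vv v\|_{\dH^{0,s}}^2 \sim \sum_\ell 2^{2\ell s}\|\D_\ell^\v\vv v\|_{L^2}^2$ and the analogous statement with $\na_\h$, to repackage the dyadic sums into the stated Sobolev norms. The product/commutator structure is routine once the decomposition is set up, so the bulk of the write-up is bookkeeping of dyadic indices.

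The main obstacle, and the only genuinely delicate point, is the treatment of the vertical-derivative term $v^3\p_3\vv v$: one must be careful to use $\p_3 v^3=-\dive_\h\vv v^\h$ (and, where $\p_3$ falls on $\vv v^\h$, to integrate by parts in $x_3$ and move it onto $v^3$, then again use the divergence-free relation) so that no uncontrolled $\|\p_3\vv v\|$ norm survives on the right-hand side — the entire point of the lemma is that only horizontal derivatives and $\cB^{0,1/2}$/$\dH^{0,s}$ norms appear. A secondary technical care is to make sure, in the remainder term $R^\v$, that the "$3/2$" power is not worse: one checks that the worst near-diagonal contributions indeed only cost one extra half-power of $\|\na_\h\vv v\|_{\dH^{0,s}}$ beyond the linear term, never a full extra derivative, which is why $s>0$ (rather than $s\geq 1$) suffices here.
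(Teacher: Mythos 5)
Your plan follows the paper's proof essentially step for step: split $\vv v\cdot\na\vv v$ into $\vv v^\h\cdot\na_\h\vv v + v^3\p_3\vv v$, apply Bony's decomposition in the vertical variable, estimate the low-frequency factors in $L^4_\h(L^\infty_\v)$ via the $\cB^{0,\f12}$ interpolation and the high-frequency factors in $L^2$ or $L^4_\h(L^2_\v)$, convert the vertical derivative through $\p_3 v^3=-\dive_\h\vv v^\h$, and sum over dyadic blocks using $s>0$. The one step you compress as "routine" — the commutator argument for $T^\v_{v^3}\p_3\vv v$, i.e. the splitting into $[\D_\ell^\v;S^\v_{\ell'-1}v^3]\p_3\D_{\ell'}^\v\vv v$, the difference term, and the diagonal transport term $S_\ell^\v v^3\p_3\D_\ell^\v\vv v$ handled by integration by parts — is precisely what the paper writes out, so the approach is the same.
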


We postpone the proof of this lemma till we finish the proof of Theorem \ref{global existence thm}.

By inserting the estimate \eqref{S3eq3a} into \eqref{S3eq3}, and then multiplying the inequality
by $2^{2\ell s},$ finally by summing up the resulting inequalities for $\ell$ in $\Z,$ we achieve
\beq \label{S3eq10}
\begin{split}
\f12\f{d}{dt}\|\vv v(t)\|_{\dH^{0,s}}^2+ \|\na_\h \vv v(t)\|_{\dH^{0,s}}^2\leq C\Bigl(&\|\na_\h \vv v\|_{\cB^{0,\f12}}\|\vv v\|_{\dH^{0,s}}
\|\na_\h\vv v\|_{\dH^{0,s}}\\
&+\|\vv v\|_{\cB^{0,\f12}}^{\f12}\|\na_\h \vv v\|_{\cB^{0,\f12}}^{\f12}\|\vv v\|_{\dH^{0,s}}^{\f12}
\|\na_\h\vv v\|_{\dH^{0,s}}^{\f32}\Bigr).
\end{split}
\eeq
Applying Young's inequality gives
\beno
\text{r.h.s.  of \eqref{S3eq10}}\leq  {C}\bigl(1+\|\vv v\|_{\cB^{0,\f12}}^{2}\bigr)\|\na_\h \vv v\|_{\cB^{0,\f12}}^2\|\vv v\|_{\dH^{0,s}}^2
+\f12\|\na_\h\vv v\|_{\dH^{0,s}}^2,
\eeno
so that there holds
\beq\label{S3eq10a}
\f{d}{dt}\|\vv v(t)\|_{\dH^{0,s}}^2+ \|\na_\h \vv v\|_{\dH^{0,s}}^2\leq {C}\bigl(1+\|\vv v\|_{\cB^{0,\f12}}^{2}\bigr)\|\na_\h \vv v\|_{\cB^{0,\f12}}^2\|\vv v\|_{\dH^{0,s}}^2.
 \eeq
 Applying Gronwwall's inequality yields for any $s>0$ that
 \begin{align*}
 \|\vv v\|_{L^\infty_t(\dH^{0,s})}^2+ \|\na_\h \vv v(t)\|_{L^2_t(\dH^{0,s)}}^2
 \leq \|\vv v_0\|_{\dH^{0,s}}^2\exp\Bigl({C}\bigl(1+\|\vv v\|_{L^\infty_t(\cB^{0,\f12})}^{2}\bigr)
 \|\na_\h \vv v\|_{L^2_t(\cB^{0,\f12})}^2\Bigr),
 \end{align*}
 which together with \eqref{S3eq1} leads to \eqref{S3eq2}.

To handle the  estimate of \eqref{S3eq2a} for case when $s\geq 1,$ we get, by applying $\D_k^\h$ to the $\vv v$ equation of \eqref{ANS} and then taking $L^2$-inner product
of the resulting equation with $\D_k^\h\vv v,$ that
\beq \label{S3eq12}
\f12\f{d}{dt}\|\D_k^\h\vv v(t)\|_{L^2}^2+ \|\na_\h\D_k^\h\vv v\|_{L^2}^2=-\left(\D_k^\h(\vv v\cdot\na\vv v)\ |\ \D_k^\h\vv v\right).
\eeq

The estimate of the term on the r.h.s. of \eqref{S3eq12} relies on the following lemma, the proof of which will be postponed at the end of this section.

\begin{lem}\label{S3lem2}
{\sl Let $s>-1,$  there hold
\beq \label{S3eq13}
\begin{split}
\bigl|\left(\D_k^\h(\vv v^\h\cdot\na_\h\vv v)\ |\ \D_k^\h\vv v\right)\bigr|\lesssim c_k^2(t)2^{-2k s}
\Bigl(&\|\na_\h \vv v\|_{\cB^{0,\f12}}\|\vv v\|_{\dH^{s,0}}
\|\na_\h\vv v\|_{\dH^{s,0}}\\
&+\|\vv v\|_{\cB^{0,\f12}}^{\f12}\|\na_\h \vv v\|_{\cB^{0,\f12}}^{\f12}\|\vv v\|_{\dH^{s,0}}^{\f12}
\|\na_\h\vv v\|_{\dH^{s,0}}^{\f32}\Bigr),
\end{split}
\eeq
and
\beq \label{S3eq14}
\begin{split}
\bigl|\left(\D_k^\h(v^3\pa_3\vv v)\ |\ \D_k^\h\vv v\right)\bigr|\lesssim &c_k^2(t)2^{-2ks}\Bigl\{\|\na_\h \vv v\|_{\cB^{0,\f12}}\|\vv v\|_{\dH^{s,0}}
\|\na_\h\vv v\|_{\dH^{s,0}}+\|\vv v\|_{\cB^{0,\f12}}^{\f12}\|\na_\h \vv v\|_{\cB^{0,\f12}}^{\f12}
\\
&\times\Bigl(\|\vv v\|_{\dH^{s,0}}^{\f12}
\|\na_\h\vv v\|_{\dH^{s,0}}^{\f32}+\|\na_\h\vv v\|_{\dH^{s,0}}\|\pa_3\vv v\|_{\dH^{s-1,0}}^{\f12}
\|\pa_3\vv v\|_{\dH^{s,0}}^{\f12}\Bigr)\Bigr\}.
\end{split}
\eeq
}
\end{lem}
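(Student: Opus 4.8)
\textbf{Proof proposal for Lemma \ref{S3lem2}.}
The plan is to use Bony's anisotropic paradifferential decomposition in the horizontal variable, exactly as in the proof of Lemma \ref{S3lem1}, and to reduce every term to the two building-block quantities $\|\cdot\|_{\cB^{0,\f12}}$ and $\|\cdot\|_{\dH^{s,0}}$ (plus $\|\pa_3\vv v\|_{\dH^{s-1,0}}$ in the last term). For \eqref{S3eq13} I would write $\vv v^\h\cdot\na_\h\vv v=T^{\rm h}_{\vv v^\h}\na_\h\vv v+T^{\rm h}_{\na_\h\vv v}\vv v^\h+R^{\rm h}(\vv v^\h,\na_\h\vv v)$, apply $\D_k^\h$, and pair with $\D_k^\h\vv v$. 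In each piece one factor will be estimated in $L^\infty_\h(L^2_\v)$ via the anisotropic Bernstein inequality, using that for a horizontally frequency-localized function $\D_j^\h a$ one has $\|\D_j^\h a\|_{L^\infty_\h(L^2_\v)}\lesssim 2^{j}\|\D_j^\h a\|_{L^2}$, and then by the $\cB^{0,\f12}$/$\dH^{1/2}$ interpolation one trades $2^{j}\|\D_j^\h a\|_{L^2}$ for $\|a\|_{\cB^{0,\f12}}^{1/2}\|\na_\h a\|_{\cB^{0,\f12}}^{1/2}$ or controls it by $\|\na_\h a\|_{\cB^{0,\f12}}$ directly when $a$ is low-frequency summed. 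The summation over the paraproduct indices against $\D_k^\h\vv v$ is handled by the standard $\ell^2$ Cauchy--Schwarz convolution argument, producing the generic sequence $c_k(t)$ and the factor $2^{-2ks}$; the condition $s>-1$ is exactly what makes the low-high paraproduct $T^{\rm h}_{\na_\h\vv v}\vv v^\h$ summable (the series $\sum_{j\le k}2^{(s+1)(j-k)}$ converges).

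For \eqref{S3eq14} the new feature is the vertical derivative $\pa_3$, which is not damped by the horizontal dissipation, so one cannot afford to put $\pa_3\vv v$ in $\dH^{s,0}$ with full weight everywhere. I would again split $v^3\pa_3\vv v=T^{\rm h}_{v^3}\pa_3\vv v+T^{\rm h}_{\pa_3\vv v}\vv v+R^{\rm h}(v^3,\pa_3\vv v)$. The paraproduct $T^{\rm h}_{v^3}\pa_3\vv v$ and the remainder are treated as above, with $v^3$ (which carries good horizontal dissipation) playing the role of the ``rough'' factor and $\pa_3\vv v$ the ``smooth'' factor placed in $\dH^{s,0}$; these give the first line and the $\|\vv v\|_{\dH^{s,0}}^{1/2}\|\na_\h\vv v\|_{\dH^{s,0}}^{3/2}$ contribution. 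The delicate term is $T^{\rm h}_{\pa_3\vv v}\vv v$, where $\pa_3\vv v$ is the low-frequency, summed-up factor; after integrating by parts in $x_3$ to move the $\pa_3$ off (using $\div\vv v=0$ only heuristically here — actually one just uses that $\pa_3$ commutes with $\D_k^\h$ and shifts onto $\D_k^\h\vv v$ or onto $v^3$), one is left to estimate $S^{\rm h}_{k-1}\pa_3\vv v$ in $L^\infty_\h(L^2_\v)$. Here I would \emph{not} use the full $\dH^{s,0}$ norm but instead interpolate: $\|S^{\rm h}_{k-1}\pa_3\vv v\|_{L^\infty_\h(L^2_\v)}\lesssim \sum_{j\le k}2^{j}\|\D_j^\h\pa_3\vv v\|_{L^2}$, and bound this convolution-style against $\|\pa_3\vv v\|_{\dH^{s-1,0}}^{1/2}\|\pa_3\vv v\|_{\dH^{s,0}}^{1/2}$ after pairing with $\|\D_k^\h\na_\h\vv v\|_{L^2}$ (one power of $\na_\h\vv v$ comes from the dissipation-friendly factor in the product). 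This is what produces the last term $\|\na_\h\vv v\|_{\dH^{s,0}}\|\pa_3\vv v\|_{\dH^{s-1,0}}^{1/2}\|\pa_3\vv v\|_{\dH^{s,0}}^{1/2}$, and it is the arrangement that keeps only \emph{one} half-power of $\pa_3\vv v$ at the top regularity $\dH^{s,0}$.

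The main obstacle is exactly this last term: deciding the precise distribution of derivatives in $T^{\rm h}_{\pa_3\vv v}\vv v$ so that (i) $\pa_3\vv v$ never appears with more than one half-power in $\dH^{s,0}$, (ii) the horizontal-gradient factor $\na_\h\vv v$ appears linearly (so that, after being squared in Young's inequality downstream, it can be absorbed by the dissipation term $\|\na_\h\vv v\|^2_{\dH^{s,0}}$ on the left of \eqref{S3eq12}), and (iii) the horizontal-frequency summation still converges, which again forces $s>-1$. Once the bookkeeping in (i)--(iii) is fixed, the remaining manipulations — anisotropic Bernstein, Hölder in $(x_{\rm h},x_3)$, $\ell^2$-summation with the $c_k(t)$ sequence — are routine and parallel to Lemma \ref{S3lem1}. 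I would also note that the cross term $R^{\rm h}(v^3,\pa_3\vv v)$ requires, as usual, an extra half-derivative of room in the horizontal variable, which is available here because $v^3$ always comes with the horizontal dissipation; this is why $v^3$ rather than a generic component must sit in the remainder.
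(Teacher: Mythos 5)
Your treatment of \eqref{S3eq13} is essentially the paper's: horizontal Bony decomposition, anisotropic Bernstein and the $L^4_\h(L^\infty_\v)$/$L^4_\h(L^2_\v)$ interpolations, and $\ell^2$ convolution summation (a small slip: the constraint $s>-1$ is needed for the remainder $R^\h$, where $k'$ ranges over $k'\ge k-3$, not for the paraproduct $T^\h_{\na_\h\vv v}\vv v^\h$, whose indices satisfy $|k'-k|\le 4$). For \eqref{S3eq14}, however, your plan has genuine gaps. First, the divergence-free condition is not ``heuristic'' here; it is the essential mechanism. After integrating $T^\h_{\pa_3\vv v}v^3$ and $R^\h(v^3,\pa_3\vv v)$ by parts in $x_3$, the piece where $\pa_3$ lands on $v^3$ must be rewritten as $\pa_3 v^3=-\dive_\h\vv v^\h$, converting the vertical derivative into a horizontal one; without this, you are left with a contribution carrying a full power of $\pa_3\vv v$ at regularity $\dH^{s,0}$ (or an uncontrolled low-frequency sum of $\pa_3\vv v$), which does not appear in \eqref{S3eq14} and cannot be absorbed by the horizontal dissipation in the subsequent Gronwall argument. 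Second, your proposed bound $\|S^\h_{k-1}\pa_3\vv v\|_{L^\infty_\h(L^2_\v)}\lesssim\sum_{j\le k}2^{j}\|\D_j^\h\pa_3\vv v\|_{L^2}$ estimated against $\|\pa_3\vv v\|_{\dH^{s-1,0}}^{1/2}\|\pa_3\vv v\|_{\dH^{s,0}}^{1/2}$ requires the convergence of $\sum_{j\le k}c_j(t)2^{j(\f32-s)}$, hence $s<\f32$, whereas the lemma is stated and used for every $s>-1$ (Theorem \ref{global existence thm} needs arbitrary $s\ge 1$, e.g.\ $s=4$); moreover the surviving power of $2^{k}$ from this sum does not reproduce the weight $2^{-2ks}$, so the estimate would not close in the stated form. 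The paper instead always keeps the summed low-frequency factor in scale-invariant norms ($L^4_\h(L^\infty_\v)$ or $L^2_\h(L^\infty_\v)$, controlled by $\cB^{0,\f12}$ quantities) and interpolates only the single localized block $\D^\h_{k'}\pa_3\vv v$ between $\dH^{s-1,0}$ and $\dH^{s,0}$, which involves no divergent sum and no upper restriction on $s$.

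Your bookkeeping of which Bony piece yields which contribution is also inconsistent with the right-hand side of \eqref{S3eq14}. If in $T^\h_{v^3}\pa_3\vv v$ you place $\pa_3\vv v$ ``in $\dH^{s,0}$'' with full weight, you generate $\|\pa_3\vv v\|_{\dH^{s,0}}$ to the first power, which is absent from the statement; in the paper this very paraproduct is one source of the last term, obtained by bounding $\|\D^\h_{k'}\pa_3\vv v\|_{L^4_\h(L^2_\v)}\lesssim\|\D^\h_{k'}\pa_3\vv v\|_{L^2}^{\f12}\|\D^\h_{k'}\na_\h\pa_3\vv v\|_{L^2}^{\f12}$ (hence the half powers in $\dH^{s-1,0}$ and $\dH^{s,0}$) and by trading the factor $2^{-k}$ from $\|\D_k^\h\vv v\|_{L^2}$ for the linear factor $\|\na_\h\vv v\|_{\dH^{s,0}}$. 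Likewise the remainder $R^\h(v^3,\pa_3\vv v)$ is not ``treated as above'': it too must be integrated by parts in $x_3$, with the piece $R^\h(\pa_3v^3,\vv v)$ handled through $\pa_3v^3=-\dive_\h\vv v^\h$ and the piece paired with $\D_k^\h\pa_3\vv v$ handled by the same block interpolation. Until these points — the indispensable use of $\dive\vv v=0$, the avoidance of the low-frequency sum on $\pa_3\vv v$, and the correct distribution of derivatives — are fixed, the proposed argument does not yield \eqref{S3eq14} for the full range $s>-1$.
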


By inserting the estimates \eqref{S3eq13} and \eqref{S3eq14} into \eqref{S3eq12}, and then multiplying the
resulting inequalities by $2^{2ks}$ and summing up $k$ over $\Z,$ we arrive at
\beq \label{S3eq15}
\begin{split}
\f12\f{d}{dt}\|\vv v(t)\|_{\dH^{s,0}}^2+ \|\na_\h\vv v(t)\|_{\dH^{s,0}}^2\leq C\Bigl(&\|\vv v\|_{\cB^{0,\f12}}^{\f12}\|\na_\h \vv v\|_{\cB^{0,\f12}}^{\f12}\bigl(\|\na_\h\vv v\|_{\dH^{s,0}}\|\pa_3\vv v\|_{\dH^{s-1,0}}^{\f12}
\|\pa_3\vv v\|_{\dH^{s,0}}^{\f12}\\
&+\|\vv v\|_{\dH^{s,0}}^{\f12}
\|\na_\h\vv v\|_{\dH^{s,0}}^{\f32}\bigr)+\|\na_\h \vv v\|_{\cB^{0,\f12}}\|\vv v\|_{\dH^{s,0}}
\|\na_\h\vv v\|_{\dH^{s,0}}\Bigr).
\end{split}
\eeq
However, notice that for $s\geq 1,$
\beno\begin{aligned}
\|\pa_3a\|_{\dH^{s-1,0}}^2=&\int_{\R^3}|\xi_\h|^{2(s-1)}|\xi_3|^2|\widehat{a}(\xi)|^2\,d\xi\\
\leq &\Bigl(\int_{\R^3}|\xi_\h|^{2s}|\widehat{a}(\xi)|^2\,d\xi\Bigr)^{\f{s-1}s}\Bigl(
\int_{\R^3}|\xi_3|^{2s}|\widehat{a}(\xi)|^2\,d\xi\Bigr)^{\f{1}s}=\|a\|_{\dH^{s,0}}^{\f{2(s-1)}s}\|a\|_{\dH^{0,s}}^{\f{2}s},
\end{aligned}\eeno
then we get, by applying Young's inequality, that
\beno\begin{aligned}
& C\|\vv v\|_{\cB^{0,\f12}}^{\f12}\|\na_\h \vv v\|_{\cB^{0,\f12}}^{\f12}\|\na_\h\vv v\|_{\dH^{s,0}}\|\pa_3\vv v\|_{\dH^{s-1,0}}^{\f12}
\|\pa_3\vv v\|_{\dH^{s,0}}^{\f12}\\
&\leq C\|\vv v\|_{\cB^{0,\f12}}^{\f12}\|\na_\h \vv v\|_{\cB^{0,\f12}}^{\f12}\|\na_\h\vv v\|_{\dH^{s,0}}
\|\vv v\|_{\dH^{s,0}}^{\f{s-1}{2s}}\|\vv v\|_{\dH^{0,s}}^{\f{1}{2s}}\|\na_\h\vv v\|_{\dH^{s,0}}^{\f{s-1}{2s}}\|\na_\h\vv v\|_{\dH^{0,s}}^{\f{1}{2s}}\\
&\leq {C}\|\vv v\|_{\cB^{0,\f12}}^{2}\|\na_\h \vv v\|_{\cB^{0,\f12}}^{2}\bigl(\|\vv v\|_{\dH^{s,0}}^{2}+\|\vv v\|_{\dH^{0,s}}^2\bigr)
+\f{1}6
\|\na_\h\vv v\|_{\dH^{s,0}}^2+\f12\|\na_\h\vv v\|_{\dH^{0,s}}^2,
\end{aligned}\eeno
\beno\begin{aligned}
&C\|\vv v\|_{\cB^{0,\f12}}^{\f12}\|\na_\h \vv v\|_{\cB^{0,\f12}}^{\f12}\|\vv v\|_{\dH^{s,0}}^{\f12}
\|\na_\h\vv v\|_{\dH^{s,0}}^{\f32}\leq {C}\|\vv v\|_{\cB^{0,\f12}}^{2}\|\na_\h \vv v\|_{\cB^{0,\f12}}^{2}\|\vv v\|_{\dH^{s,0}}^2
+\f{1}6
\|\na_\h\vv v\|_{\dH^{s,0}}^2,
\end{aligned}\eeno
and
\beno\begin{aligned}
&C\|\na_\h \vv v\|_{\cB^{0,\f12}}\|\vv v\|_{\dH^{s,0}}
\|\na_\h\vv v\|_{\dH^{s,0}}
\leq {C}\|\na_\h \vv v\|_{\cB^{0,\f12}}^{2}\|\vv v\|_{\dH^{s,0}}^2
+\f{1}6
\|\na_\h\vv v\|_{\dH^{s,0}}^2.
\end{aligned}\eeno
By substituting the above estimates into \eqref{S3eq15}, we achieve
\beno\begin{aligned}
\f{d}{dt}\|\vv v(t)\|_{\dH^{s,0}}^2+ \|\na_\h\vv v\|_{\dH^{s,0}}^2\leq & {C}\|\na_\h \vv v\|_{\cB^{0,\f12}}^{2}
\bigl(1+\|\vv v\|_{\cB^{0,\f12}}^{2}\bigr)\|\vv v\|_{\dH^{s,0}}^2\\
&+{C}\|\vv v\|_{\cB^{0,\f12}}^{2}\|\na_\h \vv v\|_{\cB^{0,\f12}}^{2}\|\vv v\|_{\dH^{0,s}}^2+\|\na_\h\vv v\|_{\dH^{0,s}}^2.
\end{aligned}\eeno
Applying Gronwall's inequality gives rise to
\beno\begin{aligned}
\|\vv v\|_{L^\infty_t(\dH^{s,0})}^2+ \|\na_\h\vv v\|_{L^2_t(\dH^{s,0})}^2\leq & \Bigl(\|\vv v_0\|_{\dH^{s,0}}^2
+{C}\|\vv v\|_{L^\infty_t(\cB^{0,\f12})}^{2}\|\na_\h \vv v\|_{L^2_t(\cB^{0,\f12})}^{2}\|\vv v\|_{L^\infty_t(\dH^{0,s})}^2\\
&+\|\na_\h\vv v\|_{L^2_t(\dH^{0,s})}^2\Bigr)\exp\Bigl(
{C}\|\na_\h \vv v\|_{L^2_t(\cB^{0,\f12})}^{2}
\bigl(1+\|\vv v\|_{L^\infty_t(\cB^{0,\f12})}^{2}\bigr)\Bigr),
\end{aligned}\eeno
which together with \eqref{S3eq1}, \eqref{initial ansatz} and \eqref{S3eq2} ensures
\beq
\label{S3eq11}
\|\vv v\|_{L^\infty_t(\dH^{s,0})}^2+ \|\na_\h\vv v\|_{L^2_t(\dH^{s,0})}^2\leq C\|\vv v_0\|_{\dH^{s}}^2.
\eeq

By combining \eqref{S3eq2} with \eqref{S3eq11}, we deduce \eqref{S3eq2a}.
This completes the proof of Theorem \ref{global existence thm}.
\end{proof}

Theorem \ref{global existence thm} has been proved provided that we provide the proof of Lemmas \ref{S3lem1} and
\ref{S3lem2}, which we present below.

\begin{proof}[Proof of Lemma \ref{S3lem1}]
Observing that
\beno
\left(\D_\ell^\v(\vv v\cdot\na\vv v)\ |\ \D_\ell^\v\vv v\right)=\left(\D_\ell^\v(\vv v^\h\cdot\na_\h\vv v)\ |\ \D_\ell^\v\vv v\right)
+\left(\D_\ell^\v(v^3\pa_3\vv v)\ |\ \D_\ell^\v\vv v\right),
\eeno
 we split the proof  of \eqref{S3eq3a} into the following two steps:

 \medskip

\no {\bf Step 1.} The estimate of $\left(\D_\ell^\v(\vv v^\h\cdot\na_\h\vv v)\ |\ \D_\ell^\v\vv v\right).$

\smallskip

By applying Bony's decomposition \eqref{bony} to $\vv v^\h\cdot\na_\h\vv v$ in the vertical variables, we find
\beno
\vv v^\h\cdot\na_\h\vv v=T^\v_{\vv v^\h}\cdot\na_\h\vv v+\wt{T}^\v_{\na_\h\vv v\cdot}\vv v^\h \with \wt{T}^\v_{\na_\h\vv v\cdot}\vv v^\h\eqdefa T^\v_{\na_\h\vv v\cdot}\vv v^\h+R^\v(\vv v^h,\na_h\vv v),
\eeno
Due to the support properties to the Fourier transform of the terms in $T^\v_{\vv v^\h}\cdot\na_\h\vv v,$ we
infer
\beno\begin{aligned}
\bigl|\left(\D_\ell^\v(T^\v_{\vv v^\h}\cdot\na_\h\vv v)\ |\ \D_\ell^\v\vv v\right)\bigr|
\lesssim & \sum_{|\ell'-\ell|\leq 4}\|S_{\ell'-1}^\v\vv v^\h\|_{L^4_\h(L^\infty_\v)}\|\D_{\ell'}^\v\na_\h\vv v\|_{L^2}\|\D_\ell^\v \vv v\|_{L^4_\h(L^2_\v)}\\
\lesssim & \sum_{|\ell'-\ell|\leq 4}\|\vv v^\h\|_{L^4_\h(L^\infty_\v)}\|\D_{\ell'}^\v\na_\h\vv v\|_{L^2}\|\D_\ell^\v \vv v\|_{L^2}^{\f12}
\|\D_\ell^\v\na_h\vv v\|_{L^2}^{\f12},
\end{aligned}\eeno
which together with the first inequality of Lemma \ref{interpolation lemma}, i.e.,
\beno
\|a\|_{L^4_\h(L^\infty_\v)}\lesssim \|a\|_{\cB^{0,\f12}}^{\f12}\|a\|_{\cB^{0,\f12}}^{\f12},
\eeno
ensures that
\beno\begin{aligned}
\bigl|\left(\D_\ell^\v(T^\v_{\vv v^\h}\na_\h\vv v)\ |\ \D_\ell^\v\vv v\right)\bigr|\lesssim & \Bigl(\sum_{|\ell'-\ell|\leq 4}c_{\ell'}(t)2^{-\ell's}\Bigr)c_\ell(t)2^{-\ell s}\|\vv v\|_{\cB^{0,\f12}}^{\f12}\|\na_\h \vv v\|_{\cB^{0,\f12}}^{\f12}\|\vv v\|_{\dH^{0,s}}^{\f12}
\|\na_\h\vv v\|_{\dH^{0,s}}^{\f32}\\
\lesssim &c_\ell^2(t)2^{-2\ell s}\|\vv v\|_{\cB^{0,\f12}}^{\f12}\|\na_\h \vv v\|_{\cB^{0,\f12}}^{\f12}\|\vv v\|_{\dH^{0,s}}^{\f12}
\|\na_\h\vv v\|_{\dH^{0,s}}^{\f32}.
\end{aligned}\eeno

Along the same line, due to $s>0,$ we get by using Lemma \ref{interpolation inequality} that
\beno\begin{aligned}
\bigl|\left(\D_\ell^\v(\wt{T}^\v_{\na_\h\vv v}{\vv v^\h})\ |\ \D_\ell^\v\vv v\right)\bigr|
\lesssim & \sum_{\ell'\geq \ell-N_0}\|S_{\ell'+2}^\v\na_\h\vv v^\h\|_{L^2_\h(L^\infty_\v)}\|\D_{\ell'}^\v\vv v\|_{L^4_\h(L^2_\v)}\|\D_\ell^\v \vv v\|_{L^4_\h(L^2_\v)}\\
\lesssim & \Bigl(\sum_{\ell'\geq \ell-N_0}c_{\ell'}(t)2^{-\ell's}\Bigr)c_\ell(t)2^{-\ell s}\|\na_\h \vv v\|_{\cB^{0,\f12}}\|\vv v\|_{\dH^{0,s}}
\|\na_\h\vv v\|_{\dH^{0,s}}\\
\lesssim &c_\ell^2(t)2^{-2\ell s}\|\na_\h \vv v\|_{\cB^{0,\f12}}\|\vv v\|_{\dH^{0,s}}
\|\na_\h\vv v\|_{\dH^{0,s}}.
\end{aligned}\eeno

By summarizing the above estimates, we arrive at
\beq \label{S3eq6}
\begin{split}
\bigl|\left(\D_\ell^\v(\vv v^\h\cdot\na_\h\vv v)\ |\ \D_\ell^\v\vv v\right)\bigr|\lesssim c_\ell^2(t)2^{-2\ell s}
\Bigl(&\|\na_\h \vv v\|_{\cB^{0,\f12}}\|\vv v\|_{\dH^{0,s}}
\|\na_\h\vv v\|_{\dH^{0,s}}\\
&+\|\vv v\|_{\cB^{0,\f12}}^{\f12}\|\na_\h \vv v\|_{\cB^{0,\f12}}^{\f12}\|\vv v\|_{\dH^{0,s}}^{\f12}
\|\na_\h\vv v\|_{\dH^{0,s}}^{\f32}\Bigr).
\end{split}
\eeq

\medskip

\no {\bf Step 2.} The estimate of $\left(\D_\ell^\v(v^3\pa_3\vv v)\ |\ \D_\ell^\v\vv v\right).$

We first get, by using a standard commutator's argument as that in \cite{CDGG,CZ1,Paicu}, that
\beq\label{S3eq7}
\begin{split}
\left(\D_\ell^\v(T_{v^3}^\v\pa_3\vv v)\ |\ \D_\ell^\v\vv v\right)=&\sum_{|\ell'-\ell|\leq 4}
\Bigl(\left([\D_\ell^\v; S_{\ell'-1}^\v v^3]\pa_3\D_{\ell'}^\v\vv v\ |\ \D_\ell^\v\vv v\right)\\
&\qquad+\left((S_{\ell'-1}^\v v^3-S_{\ell}^\v v^3)\pa_3\D_\ell^\v\D_{\ell'}^\v\vv v\ |\ \D_\ell^\v\vv v\right)\Bigr)
+\left(S_{\ell}^\v v^3\pa_3\D_\ell^\v\vv v\ |\ \D_\ell^\v\vv v\right).
\end{split}
\eeq
By applying   Lemmas \ref{commutator lemma} and \ref{L2} and using $\dive\vv v=0,$ we find
\beno\begin{aligned}
\sum_{|\ell'-\ell|\leq 4}&
\bigl|\left([\D_\ell^\v; S_{\ell'-1}^\v v^3]\pa_3\D_{\ell'}^\v\vv v\ |\ \D_\ell^\v\vv v\right)\bigr|\\
\lesssim & \sum_{|\ell'-\ell|\leq 4} 2^{-\ell}\|S_{\ell'-1}^\v\pa_3 v^3\|_{L^2_\h(L^\infty_\v)}\|\pa_3\D_{\ell'}^\v\vv v\|_{L^4_\h(L^2_\v)}
\|\D_{\ell}^\v\vv v\|_{L^4_\h(L^2_\v)}\\
\lesssim & \sum_{|\ell'-\ell|\leq 4} \|\dive_\h\vv v^\h\|_{L^2_\h(L^\infty_\v)}\|\D_{\ell'}^\v\vv v\|_{L^2}^{\f12}\|\na_\h\D_{\ell'}^\v\vv v\|_{L^2}^{\f12}\|\D_{\ell}^\v\vv v\|_{L^2}^{\f12}\|\na_\h\D_{\ell}^\v\vv v\|_{L^2}^{\f12}\\
\lesssim &c_\ell^2(t)2^{-2\ell s}\|\na_\h \vv v\|_{\cB^{0,\f12}}\|\vv v\|_{\dH^{0,s}}
\|\na_\h\vv v\|_{\dH^{0,s}}.
\end{aligned}\eeno
Similar estimate holds for the second term in \eqref{S3eq7}.

While we get, by using integration by parts
and $\dive v=0,$ that
\beno\begin{aligned}
\bigl|\left(S_{\ell}^\v v^3\pa_3\D_\ell^\v\vv v\ |\ \D_\ell^\v\vv v\right)\bigr|=&\f12\bigl|\int_{\R^3}
S_{\ell}^\v\pa_3 v^3|\D_\ell^\v\vv v|^2\,dx\bigr|\\
\leq&\f12\|\dive_\h\vv v^\h\|_{L^2_\h(L^\infty_\v)}\|\D_\ell^\v\vv v\|_{L^4_\h(L^2_\v)}^2\\
\lesssim &c_\ell^2(t)2^{-2\ell s}\|\na_\h \vv v\|_{\cB^{0,\f12}}\|\vv v\|_{\dH^{0,s}}
\|\na_\h\vv v\|_{\dH^{0,s}}.
\end{aligned}\eeno
This leads to
\beq \label{S3eq8}
\bigl|\left(\D_\ell^\v(T_{v^3}^\v\pa_3\vv v)\ |\ \D_\ell^\v\vv v\right)\bigr|\lesssim c_\ell^2(t)2^{-2\ell s}\|\na_\h \vv v\|_{\cB^{0,\f12}}\|\vv v\|_{\dH^{0,s}}
\|\na_\h\vv v\|_{\dH^{0,s}}. \eeq

On the other hand, again due to $\dive \vv v=0$ and $s>0,$ we deduce from Lemmas \ref{L2}  and \ref{interpolation inequality} that
\beno\begin{aligned}
\bigl|\left(\D_\ell^\v(\wt{T}^\v_{\pa_3\vv v}{v^3})\ |\ \D_\ell^\v\vv v\right)\bigr|
\lesssim & \sum_{\ell'\geq \ell-N_0}\|S_{\ell'+2}^\v\pa_3\vv v\|_{L^4_\h(L^\infty_\v)}\|\D_{\ell'}^\v v^3\|_{L^2}\|\D_\ell^\v \vv v\|_{L^4_\h(L^2_\v)}\\
\lesssim & \sum_{\ell'\geq \ell-N_0}\|S_{\ell'+2}^\v\vv v\|_{L^4_\h(L^\infty_\v)}\|\D_{\ell'}^\v\pa_3 v^3\|_{L^2}\|\D_\ell^\v \vv v\|_{L^4_\h(L^2_\v)}\\
\lesssim & \Bigl(\sum_{\ell'\geq \ell-N_0}c_{\ell'}(t)2^{-\ell's}\Bigr)c_\ell(t)2^{-\ell s}\|\vv v\|_{\cB^{0,\f12}}^{\f12}
\|\na_\h \vv v\|_{\cB^{0,\f12}}^{\f12}\|\vv v\|_{\dH^{0,s}}^{\f12}
\|\na_\h\vv v\|_{\dH^{0,s}}^{\f32}\\
\lesssim &c_\ell^2(t)2^{-2\ell s}\|\vv v\|_{\cB^{0,\f12}}^{\f12}
\|\na_\h \vv v\|_{\cB^{0,\f12}}^{\f12}\|\vv v\|_{\dH^{0,s}}^{\f12}
\|\na_\h\vv v\|_{\dH^{0,s}}^{\f32},
\end{aligned}\eeno
which together with \eqref{S3eq8} ensures that
\beq \label{S3eq9}
\begin{split}
\bigl|\left(\D_\ell^\v(v^3\pa_3\vv v)\ |\ \D_\ell^\v\vv v\right)\bigr|\lesssim c_\ell^2(t)2^{-2\ell s}
\Bigl(&\|\na_\h \vv v\|_{\cB^{0,\f12}}\|\vv v\|_{\dH^{0,s}}
\|\na_\h\vv v\|_{\dH^{0,s}}\\
&+\|\vv v\|_{\cB^{0,\f12}}^{\f12}\|\na_\h \vv v\|_{\cB^{0,\f12}}^{\f12}\|\vv v\|_{\dH^{0,s}}^{\f12}
\|\na_\h\vv v\|_{\dH^{0,s}}^{\f32}\Bigr).
\end{split}
\eeq

By combining \eqref{S3eq6} with \eqref{S3eq9}, we conclude the proof of \eqref{S3eq3a}.
\end{proof}

\begin{proof}[Proof of Lemma \ref{S3lem2}]
We divide the proof into the following two steps:

\no {\bf Step 1.} The estimate of $\left(\D_k^\h(\vv v^\h\cdot\na_\h\vv v)\ |\ \D_k^\h\vv v\right).$

By applying Bony's decomposition \eqref{bony} to $\vv v^\h\cdot\na_\h\vv v$ in the horizontal variables, we find
\beno
\vv v^\h\cdot\na_\h\vv v=T^\h_{\vv v^\h}\cdot\na_\h\vv v+{T}^\h_{\na_\h\vv v\cdot}\vv v^\h+R^\h(\vv v^\h,\na_\h\vv v).
\eeno
Due to the support properties to the Fourier transform of the terms in $T^\h_{\vv v^\h}\cdot\na_\h\vv v,$ we
infer by using Lemma \ref{interpolation lemma}  that
\beno\begin{aligned}
\bigl|\left(\D_k^\h(T^\h_{\vv v^\h}\cdot\na_\h\vv v)\ |\ \D_k^\h\vv v\right)\bigr|
\lesssim & \sum_{|k'-k|\leq 4}\|S_{k'-1}^\h\vv v^\h\|_{L^4_\h(L^\infty_\v)}\|\D_{k'}^\h\na_\h\vv v\|_{L^2}\|\D_k^\h \vv v\|_{L^4_\h(L^2_\v)}\\
\lesssim & \sum_{|k'-k|\leq 4}\|\vv v^\h\|_{L^4_\h(L^\infty_\v)}\|\D_{k'}^\h\na_\h\vv v\|_{L^2}\|\D_k^\h \na_\h\vv v\|_{L^2}^{\f12}
\|\D_k^\h \vv v\|_{L^2}^{\f12}\\
\lesssim &c_k^2(t)2^{-2k s}\|\vv v\|_{\cB^{0,\f12}}^{\f12}\|\na_\h \vv v\|_{\cB^{0,\f12}}^{\f12}\|\vv v\|_{\dH^{s,0}}^{\f12}
\|\na_\h\vv v\|_{\dH^{s,0}}^{\f32}.
\end{aligned}\eeno

Along the same line, we get
\beno\begin{aligned}
\bigl|\left(\D_k^\h({T}^\h_{\na_\h\vv v\cdot}{\vv v^\h})\ |\ \D_k^\h\vv v\right)\bigr|
\lesssim & \sum_{|k'-k|\leq 4}\|S_{k'-1}^\h\na_\h\vv v^\h\|_{L^2_\h(L^\infty_\v)}\|\D_{k'}^\h\vv v\|_{L^4_\h(L^2_\v)}\|\D_k^\h \vv v\|_{L^4_\h(L^2_\v)}\\
\lesssim & \Bigl(\sum_{|k'-k|\leq 4}c_{k'}(t)2^{-k's}\Bigr)c_k(t)2^{-k s}\|\na_\h \vv v\|_{\cB^{0,\f12}}\|\vv v\|_{\dH^{s,0}}
\|\na_\h\vv v\|_{\dH^{s,0}}\\
\lesssim &c_k^2(t)2^{-2k s}\|\na_\h \vv v\|_{\cB^{0,\f12}}\|\vv v\|_{\dH^{s,0}}
\|\na_\h\vv v\|_{\dH^{s,0}}.
\end{aligned}\eeno

While by applying Lemma \ref{L2} and using the fact that $s>-1,$ we find
\begin{align*}
\bigl|\left(\D_k^\h({R}^\h({\vv v^\h},{\na_\h\vv v}))\ |\ \D_k^\h\vv v\right)\bigr|
\lesssim & 2^k\sum_{k'\geq k-3}\|\D_{k'}^\h\vv v\|_{L^2}\|\wt{\D}_{k'}^\h\na_\h\vv v^\h\|_{L^2_\h(L^\infty_\v)}\|\D_k^\h \vv v\|_{L^2}\\
\lesssim & 2^k\Bigl(\sum_{k'\geq k-3}c_{k'}(t)2^{-k'(s+1)}\Bigr)c_k(t)2^{-k s}\|\na_\h \vv v\|_{\cB^{0,\f12}}\|\na_\h\vv v\|_{\dH^{s,0}}\|\vv v\|_{\dH^{s,0}}
\\
\lesssim &c_k^2(t)2^{-2k s}\|\na_\h \vv v\|_{\cB^{0,\f12}}\|\vv v\|_{\dH^{s,0}}
\|\na_\h\vv v\|_{\dH^{s,0}}.
\end{align*}

By summarizing the above estimates, we obtain \eqref{S3eq13}.

\medskip

\no {\bf Step 2.} The estimate of $\left(\D_k^\h(v^3\pa_3\vv v)\ |\ \D_k^\h\vv v\right).$

Once again we  get, by applying Bony's decomposition \eqref{bony} to $v^3\pa_3\vv v$ in the horizontal variables, that
\beq \label{S3eq13a}
v^3\pa_3\vv v=T^\h_{v^3}\pa_3\vv v+{T}^\h_{\pa_3\vv v}v^3+R^\h(v^3,\pa_3\vv v).
\eeq

By virtue of Lemma \ref{interpolation lemma}, it is easy to observe that
\beno\begin{aligned}
\bigl|\left(\D_k^\h(T^\h_{v^3}\pa_3\vv v)\ |\ \D_k^\h\vv v\right)\bigr|\lesssim
&\sum_{|k-k'|\leq 4}\|S_{k'-1}^\h v^3\|_{L^4_\h(L^\infty_\v)}\|\D_{k'}^\h\pa_3\vv v\|_{L^4_\h(L^2_\v)}\|\D_k^\h\vv v\|_{L^2}\\
\lesssim& \Bigl(\sum_{|k'-k|\leq 4}c_{k'}(t)2^{-k'(s-1)}\Bigr)c_k(t)2^{-k(s+1)}\\
&\qquad\qquad\times\|v^3\|_{L^4_\h(L^\infty_\v)}\|\pa_3\vv v\|_{\dH^{s-1,0}}^{\f12}
\|\pa_3\vv v\|_{\dH^{s,0}}^{\f12}\|\na_\h\vv v\|_{\dH^{s,0}}\\
\lesssim &c_k^2(t)2^{-2ks}\|v^3\|_{\cB^{0,\f12}}^{\f12}\|\na_\h v^3\|_{\cB^{0,\f12}}^{\f12}
\|\pa_3\vv v\|_{\dH^{s-1,0}}^{\f12}
\|\pa_3\vv v\|_{\dH^{s,0}}^{\f12}\|\na_\h\vv v\|_{\dH^{s,0}}.
\end{aligned}\eeno
While we get, by using integration by parts, that
\beno\begin{aligned}
\left(\D_k^\h({T}^\h_{\pa_3\vv v}{ v^3})\ |\ \D_k^\h\vv v\right)=-\left(\D_k^\h({T}^\h_{\vv v}{\pa_3 v^3})\ |\ \D_k^\h\vv v\right)
-\left(\D_k^\h({T}^\h_{\vv v}{v^3})\ |\ \D_k^\h\pa_3\vv v\right).
\end{aligned}\eeno
Using $\dive\vv v=0$ and Lemma \ref{interpolation lemma}, we find
\beno\begin{aligned}
\bigl|\left(\D_k^\h({T}^\h_{\vv v}{\pa_3v^3})\ |\ \D_k^\h\vv v\right)\bigr|
\lesssim &\sum_{|k'-k|\leq 4}\|S_{k'-1}^\h\vv v\|_{L^4_\h(L^\infty_\v)}\|\D_{k'}^\h\dive_\h\vv v^\h\|_{L^2}\|\D_k^\h\vv v\|_{L^4_\h(L^2_\v)}\\
\lesssim &\Bigl(\sum_{|k'-k|\leq 4} c_{k'}(t)2^{-k's}\Bigr)c_k(t)2^{-ks}\|\vv v\|_{L^4_\h(L^\infty_\v)}\|\na_\h \vv v^\h\|_{\dH^{s,0}}\|\vv v\|_{\dH^{s,0}}^{\f12}
\|\na_\h\vv v\|_{\dH^{s,0}}^{\f12}\\
\lesssim &c_k^2(t)2^{-2ks}\|\vv v\|_{\cB^{0,\f12}}^{\f12}\|\na_\h \vv v\|_{\cB^{0,\f12}}^{\f12}\|\vv v\|_{\dH^{s,0}}^{\f12}
\|\na_\h\vv v\|_{\dH^{s,0}}^{\f32}.
\end{aligned}\eeno
Similarly, one has
\beno\begin{aligned}
\bigl|\left(\D_k^\h({T}^\h_{\vv v}{v^3})\ |\ \D_k^\h\pa_3\vv v\right)\bigr|\lesssim &
 \sum_{|k'-k|\leq 4}\|S_{k'-1}^\h\vv v\|_{L^4_\h(L^\infty_\v)}\|\D_{k'}^\h v^3\|_{L^2}\|\D_k^\h \pa_3\vv v\|_{L^4_\h(L^2_\v)}\\
\lesssim &\Bigl(\sum_{|k'-k|\leq 4} c_{k'}(t)2^{-k'(s+1)}\Bigr)c_k(t)2^{-k(s-1)}\|\vv v\|_{L^4_\h(L^\infty_\v)}\\
&\qquad\qquad\qquad\qquad\times\|\na_\h v^3\|_{\dH^{s,0}}\|\pa_3\vv v\|_{\dH^{s-1,0}}^{\f12}\|\pa_3\vv v\|_{\dH^{s,0}}^{\f12}
\\
\lesssim &c_k^2(t)2^{-2ks}\|\vv v\|_{\cB^{0,\f12}}^{\f12}\|\na_\h \vv v\|_{\cB^{0,\f12}}^{\f12}\|\na_\h v^3\|_{\dH^{s,0}}\|\pa_3\vv v\|_{\dH^{s-1,0}}^{\f12}
\|\pa_3\vv v\|_{\dH^{s,0}}^{\f12}.
\end{aligned}\eeno

Along the same line, by using integration by parts, we find
\beno\begin{aligned}
\left(\D_k^\h({R}^\h({v^3},{\pa_3\vv v}))\ |\ \D_k^\h\vv v\right)=-\left(\D_k^\h(R^\h({\pa_3v^3},{\vv v}))\ |\ \D_k^\h\vv v\right)
-\left(\D_k^\h({R}^\h({v^3},{\vv v}))\ |\ \D_k^\h\pa_3\vv v\right).
\end{aligned}\eeno
Notice that $s>-1,$ by using $\dive\vv v=0$ and Lemma \ref{L2}, we obtain
\beno\begin{aligned}
\bigl|\left(\D_k^\h({R}^\h({\pa_3v^3},{\vv v}))\ |\ \D_k^\h\vv v\right)\bigr|
\lesssim &2^k\sum_{k'\geq k-3}\|\D_{k'}^\h\pa_3v^3\|_{L^2_\h(L^\infty_\v)}\|\wt{\D}_{k'}^\h\vv v\|_{L^2}\|\D_k^\h\vv v\|_{L^2}\\
\lesssim &2^k\Bigl(\sum_{k'\geq k-3} c_{k'}(t)2^{-k'(s+1)}\Bigr)c_k(t)2^{-ks}\|\na_\h\vv v^\h\|_{L^2_\h(L^\infty_\v)}
\|\na_\h\vv v^\h\|_{\dH^{s,0}}\|\vv v\|_{\dH^{s,0}}
\\
\lesssim &c_k^2(t)2^{-2ks}\|\na_\h \vv v\|_{\cB^{0,\f12}}\|\vv v\|_{\dH^{s,0}}
\|\na_\h\vv v\|_{\dH^{s,0}}.
\end{aligned}\eeno
Similarly, by virtue of Lemma \ref{interpolation lemma}, one has
\beno\begin{aligned}
\bigl|\left(\D_k^\h({R}^\h({v^3},{\vv v}))\ |\ \D_k^\h\pa_3\vv v\right)\bigr|\lesssim &
 \sum_{k'\geq k-3}\|\D_{k'}^\h\vv v\|_{L^4_\h(L^\infty_\v)}\|\wt{\D}_{k'}^\h v^3\|_{L^2}\|\D_k^\h \pa_3\vv v\|_{L^4_\h(L^2_\v)}\\
\lesssim &\Bigl(\sum_{k'\geq k-3} c_{k'}(t)2^{-k'(s+1)}\Bigr)c_k(t)2^{-k(s-1)}\|\vv v\|_{L^4_\h(L^\infty_\v)}\\
&\qquad\qquad\qquad\qquad\times\|\na_\h v^3\|_{\dH^{s,0}}\|\pa_3\vv v\|_{\dH^{s-1,0}}^{\f12}\|\pa_3\vv v\|_{\dH^{s,0}}^{\f12}
\\
\lesssim &c_k^2(t)2^{-2ks}\|\vv v\|_{\cB^{0,\f12}}^{\f12}\|\na_\h \vv v\|_{\cB^{0,\f12}}^{\f12}\|\na_\h v^3\|_{\dH^{s,0}}\|\pa_3\vv v\|_{\dH^{s-1,0}}^{\f12}
\|\pa_3\vv v\|_{\dH^{s,0}}^{\f12}.
\end{aligned}\eeno

By summarizing the above estimates, we obtain \eqref{S3eq14}. This finishes the proof of Lemma \ref{S3lem2}.
\end{proof}

\setcounter{equation}{0}
\section{Large time behavior of the global small solution to \eqref{ANS}}
In this section, we shall present the  proof of  Theorem \ref{decay theorem} concerning the large time behavior of the global small solution to \eqref{ANS},
especially the enhanced dissipation for the third component of the solution. In order to do so,
we need several auxiliary estimates which will be presented  in the following subsections.

\subsection{The estimate of $\|\na_{\h}\vv v(t)\|_{L^2}$}
The goal of this subsection is to present the $L^2$ estimate to
the horizontal derivatives  of $\vv v.$

\begin{lemma}\label{lemma for na h v}
{\sl Let $\vv v$ be a  smooth  enough solution of \eqref{ANS} on $[0,T].$ Then for $t\leq T,$ one has
\beq\label{Estimate for na h v 1}\begin{aligned}
&\f{d}{dt}\|\na_{\h}\vv v(t)\|_{L^2}^2+\|\D_{\h}\vv v\|_{L^2}^2\leq {C}\bigl(\|\na_{\h}\vv v\|_{\cB^{0,\f12}}^2
+\|\p_3\vv v\|_{\dH^{\f12,0}}^2\big)\|\na_{\h}\vv v\|_{L^2}^2.
\end{aligned}\eeq}
\end{lemma}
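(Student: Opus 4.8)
The plan is to perform a standard energy estimate on $\na_\h\vv v$ in $L^2(\R^3)$ and then control the resulting nonlinear terms using Bony's anisotropic paraproduct decomposition together with the interpolation inequalities and the anisotropic Bernstein estimates already set up in the excerpt. Concretely, I would apply $\pa_j$ (for $j=1,2$) to the $\vv v$-equation of \eqref{ANS}, take the $L^2(\R^3)$ inner product with $\pa_j\vv v$, sum over $j$, and use that the pressure term and the divergence-free condition make $\bigl(\na_\h p \mid \na_\h\vv v\bigr)=0$. Since $\D_\h$ commutes with $\pa_j$, this gives
\beno
\f12\f{d}{dt}\|\na_\h\vv v(t)\|_{L^2}^2 + \|\na_\h\na_\h\vv v\|_{L^2}^2 = -\sum_{j=1}^2\bigl(\pa_j(\vv v\cdot\na\vv v)\mid \pa_j\vv v\bigr),
\eeno
and the left-hand side dissipation term is exactly $\|\D_\h\vv v\|_{L^2}^2$ up to relabeling (using that $\|\na_\h\na_\h\vv v\|_{L^2}=\|\D_\h\vv v\|_{L^2}$ by Plancherel, or simply keeping $\|\na_\h\na_\h\vv v\|_{L^2}^2$ which dominates a constant multiple of $\|\D_\h\vv v\|_{L^2}^2$). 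So the whole game is to bound the convection term on the right by
\beno
C\bigl(\|\na_\h\vv v\|_{\cB^{0,\f12}}^2 + \|\pa_3\vv v\|_{\dH^{\f12,0}}^2\bigr)\|\na_\h\vv v\|_{L^2}^2 + \f12\|\na_\h\na_\h\vv v\|_{L^2}^2,
\eeno
the last term being absorbed by the left-hand side.

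The next step is to split $\vv v\cdot\na\vv v = \vv v^\h\cdot\na_\h\vv v + v^3\pa_3\vv v$ and treat the two pieces separately, exactly mirroring the structure of the proofs of Lemmas \ref{S3lem1} and \ref{S3lem2}. For the horizontal-transport part $\pa_j(\vv v^\h\cdot\na_\h\vv v)$ one integrates by parts to move a derivative back onto $\pa_j\vv v$ where convenient, applies Bony's decomposition in the vertical variable to $\vv v^\h\cdot\na_\h\vv v$, and estimates the resulting paraproduct and remainder terms using Hölder in the split norms $L^4_\h(L^\infty_\v)$, $L^2$, etc., together with the interpolation inequality $\|a\|_{L^4_\h(L^\infty_\v)}\lesssim\|a\|_{\cB^{0,\f12}}^{1/2}\|\na_\h a\|_{\cB^{0,\f12}}^{1/2}$ (Lemma \ref{interpolation lemma}) — this is what produces a factor $\|\na_\h\vv v\|_{\cB^{0,\f12}}$. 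The key point for getting the form on the right of \eqref{Estimate for na h v 1} is that one must arrange the estimates so that $\vv v^\h$ only ever enters through a $\na_\h\vv v^\h$-type quantity (possible since $\vv v^\h\cdot\na_\h\vv v$ with one derivative taken out and one integration by parts leaves two $\na_\h$'s to distribute) and, crucially, so that $\na_\h\na_\h\vv v$ appears to the first power only, giving $\|\na_\h\na_\h\vv v\|_{L^2}\cdot\|\na_\h\vv v\|_{L^2}\cdot\|\na_\h\vv v\|_{\cB^{0,\f12}}$, which Young's inequality converts to the desired form.

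For the vertical-transport part $v^3\pa_3\vv v$ I would use the divergence-free condition in the crucial way: $\pa_3 v^3 = -\dive_\h\vv v^\h = -(\pa_1v^1+\pa_2v^2)$, so whenever a $\pa_3$ falls on $v^3$ it is converted into a horizontal derivative of $\vv v^\h$, which is controlled by $\na_\h\vv v$. The terms where $\pa_3$ stays on $\vv v$ carry a full $\pa_3\vv v$, and these are exactly the ones that must be estimated by the scaling-invariant quantity $\|\pa_3\vv v\|_{\dH^{1/2,0}}$; concretely one expects bounds of the shape $\|\pa_3\vv v\|_{\dH^{1/2,0}}\,\|\na_\h\vv v\|_{L^2}\,\|\na_\h\na_\h\vv v\|_{L^2}$ or $\|\pa_3\vv v\|_{\dH^{1/2,0}}\,\|\na_\h\vv v\|_{L^2}^{1/2}\|\na_\h\na_\h\vv v\|_{L^2}^{3/2}$, again closed by Young. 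The main obstacle — and the place requiring genuine care rather than routine bookkeeping — is precisely this accounting in the $v^3\pa_3\vv v$ term: one must check that the commutator/paraproduct structure together with $\dive\vv v=0$ never forces two copies of $\|\na_\h\na_\h\vv v\|_{L^2}$ (which would be non-absorbable) and never requires a vertical-regularity norm of $\vv v$ that is not available, so that every term genuinely reduces to the two quantities $\|\na_\h\vv v\|_{\cB^{0,\f12}}^2$ and $\|\pa_3\vv v\|_{\dH^{1/2,0}}^2$ multiplying $\|\na_\h\vv v\|_{L^2}^2$. Once that is verified, summing everything and absorbing the half of the dissipation yields \eqref{Estimate for na h v 1}.
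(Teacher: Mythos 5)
Your strategy is the paper's: an $L^2$ energy estimate on $\na_\h\vv v$ (equivalently, testing the equation with $-\D_\h\vv v$), vanishing of the pressure term, the splitting $\vv v\cdot\na\vv v=\vv v^\h\cdot\na_\h\vv v+v^3\p_3\vv v$ after one integration by parts, the conversion $\p_3v^3=-\dive_\h\vv v^\h$, anisotropic Bernstein/interpolation estimates, and Young's inequality to absorb a single power of $\|\D_\h\vv v\|_{L^2}$. For the horizontal part your plan works, and in fact no Bony decomposition is needed there: once $\p_j$ has landed on $\vv v^\h$ one simply uses $\|\p_j\vv v^\h\|_{L^2_\h(L^\infty_\v)}\lesssim\|\na_\h\vv v\|_{\cB^{0,\f12}}$ and $\|\na_\h\vv v\|_{L^4_\h(L^2_\v)}\lesssim\|\na_\h\vv v\|_{L^2}^{1/2}\|\D_\h\vv v\|_{L^2}^{1/2}$; note that applying $\|a\|_{L^4_\h(L^\infty_\v)}\lesssim\|a\|_{\cB^{0,\f12}}^{1/2}\|\na_\h a\|_{\cB^{0,\f12}}^{1/2}$ to $\vv v^\h$ itself would leave a factor $\|\vv v\|_{\cB^{0,\f12}}^{1/2}$ that is not permitted on the right-hand side of \eqref{Estimate for na h v 1}, so your remark that only $\na_\h\vv v^\h$-type quantities may enter is exactly the right constraint.

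The genuine gap is that the heart of the lemma is left unproved: everything reduces to the bilinear estimate $\|\na_\h v^3\,\p_3\vv v\|_{L^2}\lesssim\|\p_3\vv v\|_{\dH^{\f12,0}}\|\D_\h\vv v\|_{L^2}$, and this is precisely the step you label as ``requiring genuine care'' and defer. Moreover, of the two shapes you anticipate, only $\|\p_3\vv v\|_{\dH^{\f12,0}}\|\na_\h\vv v\|_{L^2}\|\D_\h\vv v\|_{L^2}$ is compatible with the statement; your fallback $\|\p_3\vv v\|_{\dH^{\f12,0}}\|\na_\h\vv v\|_{L^2}^{1/2}\|\D_\h\vv v\|_{L^2}^{3/2}$ Young-closes to $\|\p_3\vv v\|_{\dH^{\f12,0}}^{4}\|\na_\h\vv v\|_{L^2}^{2}$, which is not the claimed right-hand side and is not controlled by the quantities propagated in the subsequent continuity argument, so that shape must be ruled out, not merely hoped against. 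The missing verification is exactly the paper's computation: apply Bony in the horizontal variables to $\na_\h v^3\,\p_3\vv v$; for $T^\h_{\na_\h v^3}\p_3\vv v$ bound $\|S^\h_{k'-1}\na_\h v^3\|_{L^\infty}\lesssim\sum_{l\leq k'-2}2^{2l}\|\D^\h_l v^3\|_{L^2}^{1/2}\|\D^\h_l\p_3 v^3\|_{L^2}^{1/2}\lesssim 2^{k'/2}\|\D_\h\vv v\|_{L^2}$, where $\p_3v^3=-\dive_\h\vv v^\h$ is what turns the vertical derivative into one (and only one) copy of $\D_\h\vv v$, with analogous treatments of $T^\h_{\p_3\vv v}\na_\h v^3$ and the remainder, each yielding $c_k(t)\|\p_3\vv v\|_{\dH^{\f12,0}}\|\D_\h\vv v\|_{L^2}$; square-summing in $k$ gives the product estimate and then Cauchy--Schwarz against $\|\na_\h\vv v\|_{L^2}$ and Young finish the proof. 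So the route you chose is viable and is the paper's route, but as written the decisive estimate is asserted rather than established.
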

\begin{proof} By taking  $L^2$-inner product of the $\vv v$ equation of \eqref{ANS} with $-\D_{\h}\vv v$ and using integration by parts,
we find
\beq\label{E1}
\f12\f{d}{dt}\|\na_\h\vv v\|_{L^2}^2+\|\D_\h\vv v\|_{L^2}^2=\left(\vv v\cdot\na\vv v\,|\,\D_\h\vv v\right).
\eeq
To handle the term $\left(\vv v\cdot\na\vv v\,|\,\D_\h\vv v\right)$,  we get, by using integrating by parts
and $\dive \vv v=0,$ that
\beno\begin{aligned}
\left(\vv v\cdot\na\vv v\,|\,\D_\h\vv v\right)&=-\sum_{j=1}^2\bigl(\left(\p_j\vv v\cdot\na\vv v\,|\,\p_j\vv v\right)
+\left(\vv v\cdot\na\p_j\vv v\,|\,\p_j\vv v\right)\bigr)\\
&=-\sum_{j=1}^2\bigl(\left(\p_j\vv v^\h\cdot\na_\h\vv v\,|\,\p_j\vv v\right)
+\left(\p_jv^3\p_3\vv v\,|\,\p_j\vv v\right)\big).
\end{aligned}\eeno
Yet it follows  from \eqref{interpolation inequality} that
\beq\label{E2}
\begin{aligned}
\sum_{j=1}^2\bigl|\left(\p_j\vv v^\h\cdot\na_\h\vv v\,|\,\p_j\vv v\right)\bigr|\lesssim
&\|\na_\h\vv v^\h\|_{L^2_{\h}(L^\infty_\v)}\|\na_{\h}\vv v\|_{L^4_{\h}(L^2_{\v})}^2\\
\lesssim &\|\na_\h\vv v^h\|_{\cB^{0,\f12}}\|\na_{\h}\vv v\|_{L^2}\|\D_{\h}\vv v\|_{L^2}.
\end{aligned}\eeq

On the other hand, we observe that
\beq\label{E13}
\sum_{j=1}^2\bigl|\left(\p_jv^3\p_3\vv v\,|\,\p_j\vv v\right)\bigr|\lesssim\|\na_{\h}v^3\p_3\vv v\|_{L^2}\|\na_{\h}\vv v\|_{L^2}.
\eeq
Applying Bony's decomposition \eqref{bony} to $\na_{\h}v^3\p_3\vv v$ in the horizontal variables yields
\beno
\na_{\h}v^3\p_3\vv v=T^{\h}_{\na_{\h}v^3}\p_3\vv v+{T}^{\h}_{\p_3\vv v}{\na_{\h}v^3}+R^{\h}(\na_{\h}v^3,\p_3\vv v).
\eeno
For the term $T^{\h}_{\na_{\h}v^3}\p_3\vv v$, we have
\beno
\|\D_k^\h(T^{\h}_{\na_{\h}v^3}\p_3\vv v)\|_{L^2}\lesssim\sum_{|k-k'|\leq4}\|S_{k'-1}^{\h}\na_{\h}v^3\|_{L^\infty}\|\D_{k'}^{\h}\p_3\vv v\|_{L^2}
\eeno
Yet it follows from Lemma \ref{L2}, \eqref{interpolation inequality} and $\div\vv v=0$ that
\beno\begin{aligned}
\|S_{k'-1}^{\h}\na_{\h}v^3\|_{L^\infty}&\lesssim\sum_{l\leq k'-2}2^{2l}\|\D_{l}^{\h}v^3\|_{L_{\h}^2(L_{\v}^\infty)}
\lesssim\sum_{l\leq k'-2}2^{2l}\|\D_l^{\h}v^3\|_{L^2}^{\f12}\|\D_l^{\h}\p_3v^3\|_{L^2}^{\f12}\\
&\lesssim\sum_{l\leq k'-2}2^{2l}\|\D_l^{\h}v^3\|_{L^2}^{\f12}\|\D_l^{\h}(\na_{\h}\cdot\vv v^{\h})\|_{L^2}^{\f12}
\lesssim2^{\f{k'}{2}}\|\D_{\h}\vv v\|_{L^2},
\end{aligned}\eeno
from which, we infer
\beno\begin{aligned}
\|\D_k^{\h}(T^{\h}_{\na_{\h}v^3}\p_3\vv v)\|_{L^2}\lesssim &\|\D_{\h}\vv v\|_{L^2}\sum_{|k-k'|\leq4}2^{\f{k'}{2}}\|\D_{k'}^{\h}\p_3\vv v\|_{L^2}
\lesssim c_k(t)\|\D_{\h}\vv v\|_{L^2}\|\p_3\vv v\|_{\dH^{\f12,0}}.
\end{aligned}\eeno

Similarly, one has
\beno\begin{aligned}
\|\D_k^{\h}(T^{\h}_{\p_3\vv v}{\na_{\h}v^3})\|_{L^2}\lesssim & \sum_{|k-k'|\leq4}\|S_{k'-1}^{\h}\p_3\vv v\|_{L_{\v}^2(L_{\h}^\infty)}\|\D_{k'}^{\h}\na_{\h}v^3\|_{L_{\v}^\infty(L_{\h}^2)}\\
\lesssim&\|\p_3\vv v\|_{\dH^{\f12,0}}\cdot\sum_{|k'-k|\leq4} 2^{\f{k'}{2}}\|\D_{k'}^{\h}\na_{\h}v^3\|_{L^2}^{\f12}\|\D_{k'}^{\h}\na_{\h}\p_3v^3\|_{L^2}^{\f12}\\
\lesssim & c_k(t)\|\D_{\h}\vv v\|_{L^2}\|\p_3\vv v\|_{\dH^{\f12,0}}.
\end{aligned}\eeno

Finally we deduce from Lemma \ref{L2} and $\div \vv v=0$ that
\beno\begin{aligned}
\|\D_k^{\h}(R^{\h}(\na_{\h}v^3,\p_3\vv v))\|_{L^2}\lesssim &2^{k}\sum_{k'\geq k-3}\|\D_{k'}^{\h}\na_{\h}v^3\|_{L_{\v}^\infty(L_{\h}^2)}
\|\wt{\D}_{k'}^{\h}\p_3\vv v\|_{L^2}\\
\lesssim &2^{k}\sum_{k'\geq k-3}\|\D_{k'}^{\h}\na_{\h}v^3\|_{L^2}^{\f12}\|\D_{k'}^{\h}\na_{\h}\p_3v^3\|_{L^2}^{\f12}\cdot \|\wt{\D}_{k'}^{\h}\p_3\vv v\|_{L^2}\\
\lesssim &2^k\sum_{k'\geq k-3} c_{k'}(t)2^{-k'}\|\D_{\h}\vv v\|_{L^2}\|\p_3\vv v\|_{\dH^{\f12,0}}\\
\lesssim & c_k(t)\|\D_{\h}\vv v\|_{L^2}\|\p_3\vv v\|_{\dH^{\f12,0}}.
\end{aligned}\eeno

As a result, it comes out
\beno
\|\na_{\h}v^3\p_3\vv v\|_{L^2}\lesssim\|\p_3\vv v\|_{\dH^{\f12,0}}\|\D_{\h}\vv v\|_{L^2}.
\eeno
By inserting the above estimate into \eqref{E13}, we achieve
\beq\label{E14}
\sum_{j=1}^2\bigl|\left(\p_jv^3\p_3\vv v\,|\,\p_j\vv v\right)\bigr|\lesssim \|\p_3\vv v\|_{\dH^{\f12,0}}\|\D_{\h}\vv v\|_{L^2}\|\na_{\h}\vv v\|_{L^2}.
\eeq
Substituting \eqref{E2} and \eqref{E14} into \eqref{E1} gives rise to
\beno\begin{aligned}
&\f12\f{d}{dt}\|\na_{\h}\vv v\|_{L^2}^2+\|\D_{\h}\vv v\|_{L^2}^2\lesssim
\bigl(\|\na_{\h}\vv v\|_{\cB^{0,\f12}}
+\|\p_3\vv v\|_{\dH^{\f12,0}}\bigr)\|\na_{\h}\vv v\|_{L^2}\|\D_{\h}\vv v\|_{L^2},
\end{aligned}\eeno
which implies \eqref{Estimate for na h v 1}. This finishes the proof of Lemma \ref{lemma for na h v}.
\end{proof}

\subsection{The estimate of $\|\p_3\vv v(t)\|_{ \dot{H}^{-\f12,0}}$}
In order to close the estimate \eqref{Estimate for na h v 1}, we need to handle the estimate
of $\|\p_3\vv v\|_{L^2_t(\dot{H}^{\f12,0})},$ which is the purpose of this subsection.

\begin{lemma}\label{lemma for partial 3 v}
{Let $\vv v$ be a smooth enough solution of \eqref{ANS} on $[0,T].$ Then for any $t\leq T$, one has
\beq\label{Estimate for partial 3 v 1}\begin{aligned}
\f{d}{dt}\|\p_3\vv v(t)\|_{\dot{H}^{-\f12,0}}^2+\|\p_3\vv v\|_{\dot{H}^{\f12,0}}^2\leq {C}\Bigl(&\|\na_{\h}\vv v\|_{\cB^{0,\f12}}^2\|\p_3\vv v\|_{\dot{H}^{-\f12,0}}^2\\
&
+\|\na_{\h} v^3\|_{L^2}^{\f12}\|\dive_{\h}\vv v^\h\|_{L^2}^{\f12}\|\p_3^2\vv v\|_{L^2}\|\p_3\vv v\|_{\dH^{-\f12,0}}\Bigr).
\end{aligned}\eeq}
\end{lemma}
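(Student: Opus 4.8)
The plan is to run a Littlewood--Paley energy estimate for $\p_3\vv v$ at the scaling--critical regularity $\dH^{-\f12,0}$. Concretely, I would differentiate the $\vv v$-equation of \eqref{ANS} in $x_3$, apply the horizontal dyadic block $\D_k^\h$, take the $L^2$ inner product of the resulting equation with $\D_k^\h\p_3\vv v$, multiply by $2^{-k}$ and sum over $k\in\Z$. Up to the usual equivalence of norms this yields
\[
\f12\f{d}{dt}\|\p_3\vv v\|_{\dH^{-\f12,0}}^2+\|\p_3\vv v\|_{\dH^{\f12,0}}^2=-\bigl(\p_3(\vv v\cdot\na\vv v)\ \big|\ |D_\h|^{-1}\p_3\vv v\bigr),
\]
the pressure term dropping out because the Fourier multiplier $|D_\h|^{-1}$ commutes with $\na$ and $\dive\p_3\vv v=0$, and the dissipative term becoming $\|\na_\h\p_3\vv v\|_{\dH^{-\f12,0}}^2=\|\p_3\vv v\|_{\dH^{\f12,0}}^2$. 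It then remains to control the nonlinear pairing on the right-hand side.

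Next I would decompose the source term. Writing $\vv v\cdot\na\vv v=\vv v^\h\cdot\na_\h\vv v+v^3\p_3\vv v$, distributing the $\p_3$-derivative, and using the incompressibility identity $\p_3v^3=-\dive_\h\vv v^\h$ to recast $\p_3v^3\,\p_3\vv v=-\dive_\h\vv v^\h\,\p_3\vv v$, I would split the nonlinearity into the four contributions $\p_3\vv v^\h\cdot\na_\h\vv v$, $\vv v^\h\cdot\na_\h\p_3\vv v$, $-\dive_\h\vv v^\h\,\p_3\vv v$ and $v^3\p_3^2\vv v$. In the first three a horizontal derivative sits on a factor other than $\p_3\vv v$, so they are amenable to the scheme already used for the term $\bigl(\D_\ell^\v(v^3\p_3\vv v)\ |\ \D_\ell^\v\vv v\bigr)$ in Step 2 of the proof of Lemma \ref{S3lem1} and for \eqref{E13}--\eqref{E14} in the proof of Lemma \ref{lemma for na h v}: Bony's decomposition \eqref{bony} in the horizontal variable, a commutator estimate (Lemma \ref{commutator lemma}) together with $\dive\vv v=0$ for the diagonal paraproduct, and the anisotropic Bernstein and interpolation inequalities (Lemmas \ref{L2} and \ref{interpolation lemma}). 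After summing in $k$ and applying Young's inequality, each of these pieces is bounded by $\f14\|\p_3\vv v\|_{\dH^{\f12,0}}^2+C\|\na_\h\vv v\|_{\cB^{0,\f12}}^2\|\p_3\vv v\|_{\dH^{-\f12,0}}^2$ (absorbing the $\|\vv v\|_{\cB^{0,\f12}}$-factors into the constant via \eqref{initial ansatz}--\eqref{S3eq1}), which produces the first term on the right-hand side of \eqref{Estimate for partial 3 v 1}.

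The core difficulty is the remaining term $\bigl(v^3\p_3^2\vv v\ \big|\ |D_\h|^{-1}\p_3\vv v\bigr)$, which carries two vertical derivatives on $\vv v$ and offers no horizontal gain. Here incompressibility enters a second time: the identity $\p_3v^3=-\dive_\h\vv v^\h$ converts a vertical derivative of $v^3$ into a horizontal derivative of $\vv v^\h$, and repeating the block-wise interpolation from the proof of Lemma \ref{lemma for na h v} gives $\|S_j^\h v^3\|_{L^\infty}\lesssim 2^{\f j2}\|\na_\h v^3\|_{L^2}^{\f12}\|\dive_\h\vv v^\h\|_{L^2}^{\f12}$. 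I would then perform Bony's decomposition of $v^3\p_3^2\vv v$ in the horizontal variable: for the low--high paraproduct $T^\h_{v^3}(\p_3^2\vv v)$ the output sits at the frequency $2^{k'}$ of $\D_{k'}^\h\p_3^2\vv v$, so $|D_\h|^{-1}$ contributes a factor $2^{-k'}$ which, combined with the $2^{k'/2}$ growth of $\|S^\h_{k'-1}v^3\|_{L^\infty}$, leaves exactly the weight that distributes $\p_3\vv v$ into $\dH^{-\f12,0}$ while keeping $\p_3^2\vv v$ in $L^2$; the high--low and high--high pieces $T^\h_{\p_3^2\vv v}v^3$ and $R^\h(v^3,\p_3^2\vv v)$ are first integrated by parts in $x_3$ to shift one vertical derivative off $\p_3^2\vv v$, reducing them to a tame piece (again via $\p_3v^3=-\dive_\h\vv v^\h$) plus a term of the type just treated. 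This yields the contribution $\|\na_\h v^3\|_{L^2}^{\f12}\|\dive_\h\vv v^\h\|_{L^2}^{\f12}\|\p_3^2\vv v\|_{L^2}\|\p_3\vv v\|_{\dH^{-\f12,0}}$, and collecting all pieces gives \eqref{Estimate for partial 3 v 1}. I expect this last term to be the main obstacle: one must arrange the horizontal Bony decomposition and the anisotropic Hölder splitting so that both vertical derivatives land on the single $L^2$-factor $\|\p_3^2\vv v\|_{L^2}$ (which is absorbed later using the propagated high vertical regularity of Theorem \ref{global existence thm}, where the hypothesis $s_1>2$ enters), so that $v^3$ enters only through the hidden-derivative quantity $\|\na_\h v^3\|_{L^2}^{\f12}\|\dive_\h\vv v^\h\|_{L^2}^{\f12}$, with no stray non-critical norm such as $\|\vv v^\h\|_{L^2}$ produced, all while keeping the dyadic sums convergent at low horizontal frequencies.
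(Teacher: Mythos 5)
Your plan is essentially the paper's proof: the same $\dH^{-\f12,0}$-level dyadic energy estimate for $\p_3\vv v$, the same splitting of $\p_3(\vv v\cdot\na\vv v)$ into the four pieces with $\div\vv v=0$ used both for the diagonal/commutator part of the transport term and to rewrite $\p_3 v^3=-\dive_\h\vv v^\h$, and the same key block estimate $\|S^\h_j v^3\|_{L^\infty}\lesssim 2^{\f j2}\|\na_\h v^3\|_{L^2}^{\f12}\|\dive_\h\vv v^\h\|_{L^2}^{\f12}$ producing the second term of \eqref{Estimate for partial 3 v 1}; your integration by parts in $x_3$ for the high-low and remainder pieces of $v^3\p_3^2\vv v$ is a harmless variant of the paper's direct estimate of $T^\h_{\na\p_3\vv v}\vv v$ and $R^\h(\vv v,\na\p_3\vv v)$. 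One small point: the paper's bounds involve only $\|\na_\h\vv v\|_{\cB^{0,\f12}}$ (obtained via Bernstein on horizontally localized blocks rather than the $L^4_\h(L^\infty_\v)$ interpolation), so the lemma is unconditional; your step of ``absorbing the $\|\vv v\|_{\cB^{0,\f12}}$-factors via \eqref{initial ansatz}'' is unnecessary and would make the stated inequality depend on the smallness hypothesis, so those factors should instead be removed by the $L^2_\h(L^\infty_\v)$/Bernstein route.
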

\begin{proof}
We first get, by applying the operator $\D_k^{\h}$ to  \eqref{ANS} and then taking $L^2$-inner product of the resulting
equations  with $-\p_3^2\D_k^{\h}\vv v,$ that
\beq\label{E20}
\f12\f{d}{dt}\|\D_k^{\h}\p_3\vv v\|_{L^2}^2+\|\na_{\h}\D_k^{\h}\p_3\vv v\|_{L^2}^2
=\left(\D_k^{\h}(\p_3\vv v\cdot\na\vv v)\,|\,\D_k^{\h}\p_3\vv v\right)+\left(\D_k^{\h}(\vv v\cdot\na\p_3\vv v)\,|\,\D_k^{\h}\p_3\vv v\right).
\eeq

\begin{itemize}
\item {\bf Estimate of $\left(\D_k^{\h}(\p_3\vv v\cdot\na\vv v)\,|\,\D_k^{\h}\p_3\vv v\right)$. }
\end{itemize}

 Due to $\div\vv v=0$, we write
\beq\label{E21}\begin{aligned}
\left(\D_k^{\h}(\p_3\vv v\cdot\na\vv v)\,|\,\D_k^{\h}\p_3\vv v\right)=\left(\D_k^{\h}(\p_3\vv v^{\h}\cdot\na_{\h}\vv v)\,|\,\D_k^{\h}\p_3\vv v\right)-\left(\D_k^{\h}((\div_{\h}\vv v^{\h})\p_3\vv v)\,|\,\D_k^{\h}\p_3\vv v\right).
\end{aligned}\eeq

 By applying Bony's decomposition \eqref{bony} to $\p_3\vv v^{\h}\cdot\na_{\h}\vv v$ in the horizontal variables,  we get
\beno
\p_3\vv v^{\h}\cdot\na_{\h}\vv v=T^\h_{\p_3\vv v^{\h}}\cdot\na_{\h}\vv v+{T}^{\h}_{\na_{\h}\vv v\cdot}{\p_3\vv v^{\h}}+R^\h(\p_3\vv v^{\h},\na_{\h}\vv v).
\eeno

For the term $T^{\h}_{\p_3\vv v^{\h}}\na_{\h}\vv v$, we have
\beno\begin{aligned}
\bigl|\left(\D_k^{\h}(T^\h_{\p_3\vv v^{\h}}\cdot\na_{\h}\vv v)\,|\,\D_k^{\h}\p_3\vv v\right)\bigr|
\lesssim &\sum_{|k-k'|\leq4}\|S_{k'-1}^{\h}\p_3\vv v^{\h}\|_{L^\infty_\h(L^2_\v)}\|\D_{k'}^{\h}\na_{\h}\vv v\|_{L^2_\h(L^\infty_\v)}\|\D_k^\h\pa_3v\|_{L^2}\\
\lesssim &2^{-k}\sum_{|k-k'|\leq4}2^{k'}\|S_{k'-1}^{\h}\p_3\vv v^{\h}\|_{L^2}\|\D_{k'}^{\h}\na_{\h}\vv v\|_{L^2_\h(L^\infty_\v)}\|\D_k^\h\na_\h\pa_3v\|_{L^2}\\
\lesssim & c_k^2(t)2^{k}\|\na_\h\vv v\|_{\cB^{0,\f12}}\|\pa_3\vv v\|_{\dH^{-\f12,0}}\|\na_\h\pa_3\vv v\|_{\dH^{-\f12,0}}.
\end{aligned}\eeno

While for term $T^\h_{\na_{\h}\vv v}{\p_3\vv v^{\h}},$ we get, by applying \eqref{interpolation inequality}, that
\beno\begin{aligned}
\bigl|\left(\D_k^{\h}(T^\h_{\na_{\h}\vv v\cdot}{\p_3\vv v^{\h}})\,|\,\D_k^{\h}\p_3\vv v\right)\big|
\lesssim &\sum_{|k-k'|\leq4}\|S_{k'-1}^{\h}\na_{\h}\vv v\|_{L^2_\h(L^\infty_\v)}\|\D_{k'}^{\h}\p_3\vv v^{\h}\|_{L^4_\h(L^2_\v)}
\|\D_{k}^{\h}\p_3\vv v^{\h}\|_{L^4_\h(L^2_\v)}\\
\lesssim & c_k^2(t)2^{k}\|\na_\h\vv v\|_{\cB^{0,\f12}}\|\pa_3\vv v\|_{\dH^{-\f12,0}}\|\na_\h\pa_3\vv v\|_{\dH^{-\f12,0}}.
\end{aligned}\eeno

Finally by applying Lemma \ref{L2}, we obtain
\beno\begin{aligned}
\bigl|\left(\D_k^{\h}(R^\h({\p_3\vv v^{\h}},\na_{\h}\vv v))\,|\,\D_k^{\h}\p_3\vv v\right)\bigr|
&\lesssim2^k\sum_{k'\geq k-3}\|\D_{k'}^{\h}\p_3\vv v^{\h}\|_{L^2}\|\wt{\D}_{k'}^{\h}\na_{\h}\vv v\|_{L^2_\h(L^\infty_\v)}\|\D_{k}^{\h}\p_3\vv v\|_{L^2}\\
&\lesssim2^{\f{3k}{2}}\Bigl(\sum_{k'\geq k-3}c_{k'}(t)2^{-\f{k'}2}\Bigr)c_k(t)\|\na_\h\pa_3\vv v\|_{\dH^{-\f12,0}}\|\na_\h\vv v\|_{\cB^{0,\f12}}\|\pa_3\vv v\|_{\dH^{-\f12,0}}\\
&\lesssim  c_k^2(t)2^{k}\|\na_\h\vv v\|_{\cB^{0,\f12}}\|\pa_3\vv v\|_{\dH^{-\f12,0}}\|\na_\h\pa_3\vv v\|_{\dH^{-\f12,0}}.
\end{aligned}\eeno
As a result, it comes out
\beq\label{E23}\begin{aligned}
\bigl|\left(\D_k^{\h}(\p_3\vv v^{\h}\cdot\na_{\h}\vv v)\,|\,\D_k^{\h}\p_3\vv v\right)\bigr|
\lesssim  c_k^2(t)2^{k}\|\na_\h\vv v\|_{\cB^{0,\f12}}\|\pa_3\vv v\|_{\dH^{-\f12,0}}\|\na_\h\pa_3\vv v\|_{\dH^{-\f12,0}}.
\end{aligned}\eeq
The estimate of the second term in \eqref{E21} is the same. Therefore, we obtain
\beq\label{E25}
\bigl|\left(\D_k^{\h}(\p_3\vv v\cdot\na\vv v)\,|\,\D_k^{\h}\p_3\vv v\right)\bigr|\lesssim  c_k^2(t)2^{k}\|\na_\h\vv v\|_{\cB^{0,\f12}}\|\pa_3\vv v\|_{\dH^{-\f12,0}}\|\na_\h\pa_3\vv v\|_{\dH^{-\f12,0}}.
\eeq

\medskip

\begin{itemize}
\item{\bf Estimate for term $\bigl|\left(\D_k^{\h}(\vv v\cdot\na\p_3\vv v)\,|\,\D_k^{\h}\p_3\vv v\right)\bigr|$. }
\end{itemize}

By applying  Bony's decomposition \eqref{bony} for $\vv v\cdot\na\p_3\vv v$ in the horizontal variable, we write
\beno
\vv v\cdot\na\p_3\vv v=T^\h_{\vv v}\cdot\na\p_3\vv v+T^\h_{\na\p_3\vv v\cdot}{\vv v}+R^\h({\vv v},\na\p_3\vv v).
\eeno
Let us first deal with the estimate of $(\D_k^{\h}(T^{\h}_{\vv v}\na\p_3\vv v)\,|\,\D_k^{\h}\p_3\vv v).$
Indeed we get, by using a standard commutator's argument, that
\beno\begin{aligned}
&(\D_k^{\h}(T^{\h}_{\vv v}\cdot\na\p_3\vv v)\,|\,\D_k^{\h}\p_3\vv v)=\sum_{|k-k'|\leq 4}\left(\D_k^{\h}(S_{k'-1}^{\h}\vv v\cdot\D_{k'}^{\h}\na\p_3\vv v)\,|\,\D_k^{\h}\p_3\vv v\right)
=I_k+II_k+III_k,
\end{aligned}\eeno
where
\beno\begin{aligned}
&I_k\eqdefa\sum_{|k-k'|\leq 4}\left(\D_k^{\h}([S_{k'-1}^{\h}\vv v-S_{k-1}^{\h}\vv v]\cdot\na\D_{k'}^{\h}\p_3\vv v)\,|\,\D_k^{\h}\p_3\vv v\right),\\
&II_k\eqdefa\sum_{|k-k'|\leq 4}\left([\D_k^{\h};S_{k-1}^{\h}\vv v]\cdot\na\D_{k'}^{\h}\p_3\vv v)\,|\,\D_k^{\h}\p_3\vv v\right),\\
&III_k\eqdefa\left(S_{k-1}^{\h}\vv v\cdot\na\D_k^{\h}\p_3\vv v\,|\,\D_k^{\h}\p_3\vv v\right).
\end{aligned}\eeno

Due to $\div\vv v=0$, we find
\beq\label{E26}
III_k=-\f12\left(\na\cdot S_{k-1}^{\h}\vv v\D_k^{\h}\p_3\vv v\,|\,\D_k^{\h}\p_3\vv v\right)=0.
\eeq

For $I_k$, we have
\beno\begin{aligned}
|I_k|&\lesssim\sum_{|k-k'|\leq 4}\sum_{|k-j|\leq 4}|(\D_k^{\h}(\D_j^{\h}\vv v\cdot\na\D_{k'}^{\h}\p_3\vv v)\,|\,\D_k^{\h}\p_3\vv v)|\\
&\lesssim\sum_{|k-k'|\leq 4}\sum_{|k-j|\leq 4}\Bigl(\|\D_j^{\h}\vv v^{\h}\|_{L^\infty}\|\na_{\h}\D_{k'}^{\h}\p_3\vv v\|_{L^2}
+\|\D_j^{\h}v^3\|_{L^\infty}\|\D_{k'}^{\h}\p_3^2\vv v\|_{L^2}\Bigr)\|\D_k^{\h}\p_3\vv v\|_{L^2}.
\end{aligned}\eeno
Yet it follows from Lemma \ref{L2} that
\beno\begin{aligned}
&\|\D_j^{\h}\vv v^{\h}\|_{L^\infty}\lesssim 2^j\|\D_j^{\h}\vv v^{\h}\|_{L^2_\h(L^\infty_\v)}
\lesssim\|\D_j^{\h}\na_{\h}\vv v\|_{L^2_\h(L^\infty_\v)}\lesssim\|\na_{\h}\vv v\|_{\cB^{0,\f12}},\\
&\|\D_j^{\h}v^3\|_{L^\infty}\lesssim 2^j\|\D_j^{\h}v^3\|_{L^2}^{\f12}\|\D_j^{\h}\p_3v^3\|_{L^2}^{\f12}
\lesssim2^{\f{j}{2}}\|\na_{\h}v^3\|_{L^2}^{\f12}\|\div_{\h}\vv v^\h\|_{L^2}^{\f12},
\end{aligned}\eeno
so that we obtain
\beq\label{E27}\begin{aligned}
|I_k|\lesssim&\|\na_{\h}\vv v\|_{\cB^{0,\f12}}\sum_{|k-k'|\leq 4}\|\D_{k'}^{\h}\na_\h\p_3\vv v\|_{L^2}\|\D_k^{\h}\p_3\vv v\|_{L^2}\\
&+\|\na_{\h} v^3\|_{L^2}^{\f12}\|\div_{\h}\vv v^\h\|_{L^2}^{\f12}\sum_{|k-k'|\leq 4}2^{\f{k'}{2}}\|\D_{k'}^{\h}\p_3^2\vv v\|_{L^2}\|\D_k^{\h}\p_3\vv v\|_{L^2}\\
\lesssim & c_k^2(t)2^{k}\bigl(\|\na_\h\vv v\|_{\cB^{0,\f12}}\|\na_\h\pa_3\vv v\|_{\dH^{-\f12,0}}+\|\na_{\h} v^3\|_{L^2}^{\f12}\|\div_{\h}\vv v^\h\|_{L^2}^{\f12}\|\p_3^2\vv v\|_{L^2}\bigr)\|\pa_3\vv v\|_{\dH^{-\f12,0}}.
\end{aligned}\eeq

For $II_k$, we get, by applying the commutator's estimate \eqref{commutator estimate}, that
\beno\begin{aligned}
&|II_k|\lesssim\sum_{|k-k'|\leq 4}2^{-k}\Bigl(\|\na_{\h}S_{k-1}^{\h}\vv v^{\h}\|_{L^\infty}\|\na_{\h}\D_{k'}^{\h}\p_3\vv v\|_{L^2}
+\|\na_{\h}S_{k-1}^{\h}v^3\|_{L^\infty}\|\D_{k'}^{\h}\p_3^2\vv v\|_{L^2}\Bigr)\|\D_k^{\h}\p_3\vv v\|_{L^2}.
\end{aligned}\eeno
Notice that
\beno\begin{aligned}
&\|\na_{\h}S_{k-1}^{\h}\vv v^{\h}\|_{L^\infty}\lesssim 2^k\|\na_{\h}S_{k-1}^{\h}\vv v^{\h}\|_{L^2_\h(L^\infty_{\v})}
\lesssim 2^k\|\na_{\h}\vv v^{\h}\|_{\cB^{0,\f12}},\\
&\|\na_{\h}S_{k-1}^{\h}v^3\|_{L^\infty}\lesssim 2^k\|\na_{\h}S_{k-1}^{\h}v^3\|_{L^2}^{\f12}\|\p_3\na_{\h}S_{k-1}^{\h}v^3\|_{L^2}^{\f12}
\lesssim2^{\f{3}{2}k}\|\na_{\h}v^3\|_{L^2}^{\f12}\|\div_{\h}\vv v^\h\|_{L^2}^{\f12},
\end{aligned}\eeno
from which, we deduce from a similar derivation of \eqref{E27}, that
\beq\label{E28}\begin{aligned}
|II_k| \lesssim  c_k^2(t)2^{k}\bigl(\|\na_\h\vv v\|_{\cB^{0,\f12}}\|\na_\h\pa_3\vv v\|_{\dH^{-\f12,0}}+\|\na_{\h}v^3\|_{L^2}^{\f12}\|\div_{\h}\vv v^\h\|_{L^2}^{\f12}\|\p_3^2\vv v\|_{L^2}\bigr)\|\pa_3\vv v\|_{\dH^{-\f12,0}}.
\end{aligned}\eeq

By summarizing the estimates \eqref{E26}, \eqref{E27} and \eqref{E28}, we obtain
\beq\label{E29}\begin{aligned}
\bigl|(\D_k^{\h}(T^{\h}_{\vv v}\cdot\na\p_3\vv v)\,|\,\D_k^{\h}\p_3\vv v)\bigr|\lesssim  c_k^2(t)2^{k}\bigl(&\|\na_\h\vv v\|_{\cB^{0,\f12}}\|\na_\h\pa_3\vv v\|_{\dH^{-\f12,0}}\\
&+\|\na_{\h}v^3\|_{L^2}^{\f12}\|\div_{\h}\vv v^\h\|_{L^2}^{\f12}\|\p_3^2\vv v\|_{L^2}\bigr)\|\pa_3\vv v\|_{\dH^{-\f12,0}}.
\end{aligned}\eeq

\smallskip

{For the term $(\D_k^{\h}(T^{\h}_{\na\p_3\vv v}{\vv v})\,|\,\D_k^{\h}\p_3\vv v)$,} we observe that
\beno\begin{aligned}
\bigl|\left(\D_k^{\h}({T}^{\h}_{\na\p_3\vv v}{\vv v})\,|\,\D_k^{\h}\p_3\vv v\right)\bigr|\lesssim
\sum_{|k-k'|\leq 4}\Bigl(&\|\na_{\h}S_{k'-1}^{\h}\p_3\vv v\|_{L^2}\|\D_{k'}^{\h}\vv v^{\h}\|_{L^\infty}\\
&+\|S_{k'-1}^{\h}\p^2_3\vv v\|_{L^\infty_{\h}(L^2_{\v})}\|\D_{k'}^{\h}v^3\|_{L^2_{\h}(L^\infty_{\v})}\Bigr)\|\D_k^{\h}\p_3\vv v\|_{L^2}.
\end{aligned}\eeno
It follows from Lemma \ref{L2} and $\div\vv v=0$, that
\beno\begin{aligned}
&\|\na_{\h}S_{k'-1}^{\h}\p_3\vv v\|_{L^2}\|\D_{k'}^{\h}\vv v^{\h}\|_{L^\infty}\lesssim c_{k'}(t)2^{\f{3}{2}k'}\|\na_\h \vv v^\h\|_{\cB^{0,\f12}}\|\p_3\vv v\|_{\dot{H}^{-\f12,0}},\\
&\|S_{k'-1}^{\h}\p^2_3\vv v\|_{L^\infty_{\h}(L^2_{\v})}\|\D_{k'}^{\h}v^3\|_{L^2_{\h}(L^\infty_{\v})}\lesssim
2^{k'}\|\p_3^2\vv v\|_{L^2}\|\D_{k'}^{\h}v^3\|_{L^2}^{\f12}\|\D_{k'}^{\h}\p_3v^3\|_{L^2}^{\f12}\\
&\qquad\qquad\qquad\qquad\qquad\qquad\qquad\ \lesssim c_{k'}(t)2^{\f{k'}{2}}\|\na_{\h}v^3\|_{L^2}^{\f12}\|\div_{\h}\vv v^\h\|_{L^2}^{\f12}\|\p_3^2\vv v\|_{L^2},
\end{aligned}\eeno
from which, we deduce that
\beq\label{E30}\begin{aligned}
\bigl|\left(\D_k^{\h}({T}^{\h}_{\na\p_3\vv v}{\vv v})\,|\,\D_k^{\h}\p_3\vv v\right)\bigr|\lesssim  c_k^2(t)2^{k}\bigl(&\|\na_\h\vv v\|_{\cB^{0,\f12}}\|\na_\h\pa_3\vv v\|_{\dH^{-\f12,0}}\\
&+\|\na_{\h}v^3\|_{L^2}^{\f12}\|\div_{\h}\vv v^\h\|_{L^2}^{\f12}\|\p_3^2\vv v\|_{L^2}\bigr)\|\pa_3\vv v\|_{\dH^{-\f12,0}}.
\end{aligned}\eeq

\smallskip

{ Finally we deal with the estimate of $(\D_k^{\h}(R^{\h}(\vv v\cdot,\na\p_3\vv v))\,|\,\D_k^{\h}\p_3\vv v)$.} Indeed by applying Lemma \ref{L2},
we find
\beno\begin{aligned}
&\bigl|\left(\D_k^{\h}(R^{\h}(\vv v,\na\p_3\vv v))\,|\,\D_k^{\h}\p_3\vv v\right)\bigr|\\
&\lesssim2^k\sum_{k'\geq k-3}\Bigl(\|\D_{k'}^{\h}\vv v^{\h}\|_{L^2_\h(L^\infty_{\v})}\|\na_{\h}\wt{\D}_{k'}^{\h}\p_3\vv v\|_{L^2}
+\|\D_{k'}^{\h}v^3\|_{L^2_{\h}(L^\infty_{\v})}\|\wt{\D}_{k'}^{\h}\p_3^2\vv v\|_{L^2}\Bigr)\|\D_k^{\h}\p_3\vv v\|_{L^2}\\
&\lesssim2^{\f{3k}2}\Bigl(\sum_{k'\geq k-3}c_{k'}(t)2^{-\f{k'}2}\Bigr)c_k(t)\Bigl(\|\na_\h\vv v\|_{\cB^{0,\f12}}\|\na_\h\pa_3\vv v\|_{\dH^{-\f12,0}}\\
&\qquad\qquad\qquad\qquad\qquad\qquad\qquad\ +\|\na_{\h} v^3\|_{L^2}^{\f12}\|\div_{\h}\vv v^\h\|_{L^2}^{\f12}\|\p_3^2\vv v\|_{L^2}\Bigr)\|\pa_3\vv v\|_{\dH^{-\f12,0}},
\end{aligned}\eeno
which implies
\beq\label{E31}\begin{aligned}
\bigl|\left(\D_k^{\h}(R^{\h}(\vv v\cdot,\na\p_3\vv v))\,|\,\D_k^{\h}\p_3\vv v\right)\bigr|
\lesssim  c_k^2(t)2^{k}\Bigl(&\|\na_\h\vv v\|_{\cB^{0,\f12}}\|\na_\h\pa_3\vv v\|_{\dH^{-\f12,0}}\\
&+\|\na_{\h}\vv v^3\|_{L^2}^{\f12}\|\div_{\h}\vv v^\h\|_{L^2}^{\f12}\|\p_3^2\vv v\|_{L^2}\Bigr)\|\pa_3\vv v\|_{\dH^{-\f12,0}}.
\end{aligned}\eeq

Thanks to \eqref{E29},  \eqref{E30} and  \eqref{E31},  we obtain
\beq\label{E32}\begin{aligned}
\bigl|\left(\D_k^\h(\vv v\cdot\na\p_3\vv v)\,|\,\D_k^{\h}\p_3\vv v\right)\bigr|
\lesssim  c_k^2(t)2^{k}\bigl(&\|\na_\h\vv v\|_{\cB^{0,\f12}}\|\na_\h\pa_3\vv v\|_{\dH^{-\f12,0}}\\
&+\|\na_{\h} v^3\|_{L^2}^{\f12}\|\div_{\h}\vv v^\h\|_{L^2}^{\f12}\|\p_3^2\vv v\|_{L^2}\bigr)\|\pa_3\vv v\|_{\dH^{-\f12,0}}.
\end{aligned}\eeq

\medskip

By inserting the estimates \eqref{E25} and \eqref{E32} into \eqref{E20} and  multiplying the resulting inequality by  $2^{-k}$ and summing  over $\Z$ with respect to $k$, we achieve
\beno\begin{aligned}
\f12\f{d}{dt}\|\p_3\vv v\|_{\dot{H}^{-\f12,0}}^2+\|\p_3\vv v\|_{\dot{H}^{\f12,0}}^2\leq C\bigl(&\|\na_\h\vv v\|_{\cB^{0,\f12}}\|\na_\h\pa_3\vv v\|_{\dH^{-\f12,0}}\\
&+\|\na_{\h}v^3\|_{L^2}^{\f12}\|\div_{\h}\vv v^\h\|_{L^2}^{\f12}\|\p_3^2\vv v\|_{L^2}\bigr)\|\pa_3\vv v\|_{\dH^{-\f12,0}},
\end{aligned}\eeno
which leads to \eqref{Estimate for partial 3 v 1}. This finishes the proof of  Lemma \ref{lemma for partial 3 v}.
\end{proof}

\subsection{The estimate of $\|\vv v(t)\|_{\dot{H}^{-s,0}}$}
In order to derive the decay in time estimate for the solutions of \eqref{ANS}, we need the negative derivative
estimate of $\vv v$ in the horizontal variables (see \eqref{s22} below).

\begin{lemma}\label{lemma for v in H -s}
{\sl Let $s\in (0,1)$ and $\vv v$ be a  smooth enough solution of \eqref{ANS} on $[0,T].$ Then for $t\leq T,$ we have
\beq\label{Estimate for v in H -s}\begin{aligned}
&\f{d}{dt}\|\vv v(t)\|_{\dot{H}^{-s,0}}^2+\|\na_{\h}\vv v\|_{\dot{H}^{-s,0}}^2\leq
{C}\bigl((1+\|\vv v\|_{\cB^{0,\f12}}^2)\|\na_{\h}\vv v\|_{\cB^{0,\f12}}^2
+\|\p_3\vv v\|_{\dH^{\f12,0}}^2\big)\|\vv v\|_{\dot{H}^{-s,0}}^2.
\end{aligned}\eeq}
\end{lemma}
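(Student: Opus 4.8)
The plan is to follow the pattern of the proofs of Lemmas \ref{lemma for na h v} and \ref{S3lem2}: localize in the horizontal frequencies, run a weighted $L^2$ energy estimate, and split the convection term as $\vv v\cdot\na\vv v=\vv v^\h\cdot\na_\h\vv v+v^3\p_3\vv v$. First I would apply $\D_k^\h$ to the $\vv v$-equation of \eqref{ANS}, take the $L^2$ inner product of the resulting equation with $\D_k^\h\vv v$, and use $\dive\vv v=0$ together with integration by parts to get
\[
\f12\f{d}{dt}\|\D_k^\h\vv v(t)\|_{L^2}^2+\|\na_\h\D_k^\h\vv v\|_{L^2}^2=-\left(\D_k^\h(\vv v^\h\cdot\na_\h\vv v)\,|\,\D_k^\h\vv v\right)-\left(\D_k^\h(v^3\p_3\vv v)\,|\,\D_k^\h\vv v\right).
\]
Multiplying by $2^{-2ks}$ and summing over $k\in\Z$, the lemma reduces to bounding the sums of the two families of terms on the right by $C\bigl(1+\|\vv v\|_{\cB^{0,\f12}}^2\bigr)\|\na_\h\vv v\|_{\cB^{0,\f12}}^2\|\vv v\|_{\dH^{-s,0}}^2+C\|\p_3\vv v\|_{\dH^{\f12,0}}^2\|\vv v\|_{\dH^{-s,0}}^2+\f12\|\na_\h\vv v\|_{\dH^{-s,0}}^2$, the last term being absorbed into the dissipation on the left.

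For the horizontal convection $\vv v^\h\cdot\na_\h\vv v$ I would simply invoke estimate \eqref{S3eq13} of Lemma \ref{S3lem2} with $s$ replaced by $-s$; this is legitimate precisely because $s<1$ guarantees $-s>-1$. Multiplying the resulting bound by $2^{-2ks}$, summing in $k$ and using Young's inequality then produces the first and last contributions above, the cube $\|\vv v\|_{\cB^{0,\f12}}^{\f12}\|\na_\h\vv v\|_{\cB^{0,\f12}}^{\f12}\|\vv v\|_{\dH^{-s,0}}^{\f12}\|\na_\h\vv v\|_{\dH^{-s,0}}^{\f32}$ being exactly what forces the factor $1+\|\vv v\|_{\cB^{0,\f12}}^2$.

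The genuinely delicate piece, and the main obstacle, is $v^3\p_3\vv v$: one cannot usefully apply \eqref{S3eq14} with $-s$ in place of $s$, because the term $\|\p_3\vv v\|_{\dH^{s-1,0}}$ appearing there becomes $\|\p_3\vv v\|_{\dH^{-s-1,0}}$, a norm with far too negative a horizontal index that we have no means of controlling (no vertical regularity of $\vv v$ being available). The remedy, exactly the mechanism described in Remark 1 after Theorem \ref{decay theorem}, is to use the divergence-free condition $\p_3v^3=-\dive_\h\vv v^\h$ to trade the vertical derivative for a horizontal one, and to park the leftover factors of $\p_3\vv v$ in the scaling-invariant norm $\|\p_3\vv v\|_{\dH^{\f12,0}}$ --- which is why that norm appears on the right-hand side. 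Concretely I would integrate by parts in $x_3$, writing $v^3\p_3\vv v=\p_3(v^3\vv v)+(\dive_\h\vv v^\h)\vv v$, so that
\[
\left(\D_k^\h(v^3\p_3\vv v)\,|\,\D_k^\h\vv v\right)=-\left(\D_k^\h(v^3\vv v)\,|\,\D_k^\h\p_3\vv v\right)+\left(\D_k^\h\bigl((\dive_\h\vv v^\h)\vv v\bigr)\,|\,\D_k^\h\vv v\right).
\]
The second term contains only horizontal derivatives and is handled just as $\vv v^\h\cdot\na_\h\vv v$ in Step 1 of the proof of Lemma \ref{S3lem2}. In the first term the test factor $\D_k^\h\p_3\vv v$, being a single Littlewood--Paley block, satisfies $\|\D_k^\h\p_3\vv v\|_{L^2}=c_k(t)2^{-k/2}\|\p_3\vv v\|_{\dH^{\f12,0}}$, and I would then Bony-decompose $v^3\vv v$ in the horizontal variables, controlling the factors built out of $v^3$ via $\p_3v^3=-\dive_\h\vv v^\h$ together with the anisotropic interpolation and Bernstein estimates (Lemma \ref{L2}, \eqref{interpolation inequality}, Lemma \ref{interpolation lemma}). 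The hardest part will be the bookkeeping: arranging the low--high and high--high summations over the horizontal dyadic blocks to converge while keeping the weights $2^{\pm ks}$ balanced, which is precisely where the constraint $0<s<1$ enters. Once all these bounds are collected, summing over $k$, applying Young's inequality, and absorbing $\f12\|\na_\h\vv v\|_{\dH^{-s,0}}^2$ into the left-hand side yields \eqref{Estimate for v in H -s}.
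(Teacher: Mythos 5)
Your overall skeleton matches the paper's: localize with $\D_k^\h$, run the weighted energy estimate, reuse \eqref{S3eq12}--\eqref{S3eq13} with $s$ replaced by $-s$ (legitimate since $-s>-1$), and handle the vertical convection by combining $\p_3v^3=-\dive_\h\vv v^\h$ with the norm $\|\p_3\vv v\|_{\dH^{\f12,0}}$. The paper, however, does \emph{not} integrate by parts in $x_3$: it applies Bony's decomposition directly to $v^3\p_3\vv v$ and estimates $T^\h_{v^3}\p_3\vv v$, $T^\h_{\p_3\vv v}v^3$ and $R^\h(v^3,\p_3\vv v)$ against $\D_k^\h\vv v$, getting \eqref{S4eq1}. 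Your variant works for the paraproduct pieces of $v^3\vv v$ and for $(\dive_\h\vv v^\h)\vv v$, but it creates a genuine problem in the high--high piece.

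Concretely, after your integration by parts the remainder contribution is $\bigl(\D_k^\h R^\h(v^3,\vv v)\,|\,\D_k^\h\p_3\vv v\bigr)$, and the only admissible control of the test factor is $\|\D_k^\h\p_3\vv v\|_{L^2}\lesssim c_k2^{-k/2}\|\p_3\vv v\|_{\dH^{\f12,0}}$ (or its $L^4_\h(L^2_\v)$ analogue with weight $2^{0}$), i.e.\ $\p_3\vv v$ now sits at the \emph{low output} frequency $2^k$. Meanwhile the two high-frequency factors can supply at best, with the correct norm powers (one $\|\vv v\|_{\dH^{-s,0}}$ and one $\|\na_\h\vv v\|_{\dH^{-s,0}}$), the bound $\|\D_{k'}^\h v^3\|_{L^2_\h(L^\infty_\v)}\|\wt{\D}_{k'}^\h\vv v\|_{L^2}\lesssim c_{k'}^2 2^{k'(2s-\f12)}\|\vv v\|_{\dH^{-s,0}}\|\na_\h\vv v\|_{\dH^{-s,0}}$, even using Bernstein together with $\p_3v^3=-\dive_\h\vv v^\h$. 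Assembling and weighting by $2^{-2ks}$, one is left with $\sum_{k}\sum_{k'\ge k-3}c_kc_{k'}^2 2^{k(\f12-2s)}2^{k'(2s-\f12)}$, which converges only when $2s-\f12<0$; for $s\in[\f14,1)$ (the regime actually needed in Theorem \ref{decay theorem}) the double sum cannot be closed, and trading factors into $\cB^{0,\f12}$-norms only makes the $k$-sum diverge as $k\to-\infty$. The structural point is that in the high--high interaction one needs the $2^{-k'/2}$ decay that $\|\p_3\vv v\|_{\dH^{\f12,0}}$ provides at the \emph{high} frequency $2^{k'}$: this is exactly what the paper exploits in $R^\h(v^3,\p_3\vv v)$, where $\|\D_{k'}^\h v^3\|_{L^2_\h(L^\infty_\v)}\lesssim c_{k'}2^{k'(s-\f12)}\|\na_\h\vv v\|_{\dH^{-s,0}}$ and $\|\wt\D_{k'}^\h\p_3\vv v\|_{L^2}\lesssim c_{k'}2^{-k'/2}\|\p_3\vv v\|_{\dH^{\f12,0}}$ give the factor $2^{k'(s-1)}$, summable for every $s<1$. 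So your blanket integration by parts must be undone on the remainder piece (write $-\bigl(\D_k^\h R^\h(v^3,\vv v)|\D_k^\h\p_3\vv v\bigr)=\bigl(\D_k^\h R^\h(\p_3v^3,\vv v)|\D_k^\h\vv v\bigr)+\bigl(\D_k^\h R^\h(v^3,\p_3\vv v)|\D_k^\h\vv v\bigr)$ and proceed as the paper does); as written, the claim that the ``bookkeeping'' closes for all $0<s<1$ is where the proof fails.
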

\begin{proof} In view of \eqref{S3eq12} and \eqref{S3eq13}, it remains to handle the estimate of $\left(\D_k^\h(v^3\pa_3\vv v)\ |\ \D_k^\h\vv v\right).$ Indeed  due to $\div \vv v=0$ and $s>0,$ one has
\beno
\|S_{k-1}^\h v^3\|_{L^2_\h(L^\infty_\v)}\lesssim\sum_{j\leq k-2}\|\D_j^\h v^3\|_{L^2}^{\f12}\|\D_j^\h \p_3v^3\|_{L^2}^{\f12}
\lesssim  c_k(t)2^{ks}\|v^3\|_{\dH^{-s,0}}^{\f12}\|\div_\h\vv v^\h\|_{\dH^{-s,0}}^{\f12},
\eeno
so that we deduce
\beno\begin{aligned}
\bigl|\left(\D_k^\h(T^\h_{v^3}\pa_3\vv v)\ |\ \D_k^\h\vv v\right)\bigr|\lesssim
&\sum_{|k-k'|\leq 4}\|S_{k'-1}^\h v^3\|_{L^2_\h(L^\infty_\v)}\|\D_{k'}^\h\pa_3\vv v\|_{L^4_\h(L^2_\v)}\|\D_k^\h\vv v\|_{L^4_\h(L^2_\v)}\\
\lesssim& \Bigl(\sum_{|k'-k|\leq 4}c_{k'}(t)2^{k's}\Bigr)c_k(t)2^{ks}\\
&\qquad\qquad\times\|v^3\|_{\dH^{-s,0}}^{\f12}\|\div_\h\vv v^\h\|_{\dH^{-s,0}}^{\f12}\|\pa_3\vv v\|_{\dH^{\f12,0}}
\|\vv v\|_{\dH^{-s,0}}^{\f12}\|\na_\h \vv v\|_{\dH^{-s,0}}^{\f12}\\
\lesssim &c_k^2(t)2^{2ks}\|\pa_3\vv v\|_{\dH^{\f12,0}}
\|\vv v\|_{\dH^{-s,0}}\|\na_h\vv v\|_{\dH^{-s,0}}.
\end{aligned}\eeno
Similarly again due to $\dive\vv v=0,$ we find
\begin{align*}
\bigl|\left(\D_k^\h({T}^\h_{\pa_3\vv v}{v^3})\ |\ \D_k^\h\vv v\right)\bigr|
\lesssim &\sum_{|k'-k|\leq 4}\|S_{k'-1}^\h\pa_3\vv v\|_{L^4_\h(L^2_\v)}\|\D_{k'}^\h v^3\|_{L^2_\h(L^\infty_\v)}\|\D_k^\h\vv v\|_{L^4_\h(L^2_\v)}\\
\lesssim &\Bigl(\sum_{|k'-k|\leq 4} c_{k'}(t)2^{k's}\Bigr)c_k(t)2^{ks}\\
&\qquad\qquad\times\|\pa_3 \vv v\|_{L^4_\h(L^2_\v)}\|v^3\|_{\dH^{-s,0}}^{\f12}\|\div_\h\vv v^\h\|_{\dH^{-s,0}}^{\f12}\|\vv v\|_{\dH^{-s,0}}^{\f12}
\|\na_\h\vv v\|_{\dH^{-s,0}}^{\f12}\\
\lesssim &c_k^2(t)2^{2ks}\|\pa_3\vv v\|_{\dH^{\f12,0}}
\|\vv v\|_{\dH^{-s,0}}\|\na_\h\vv v\|_{\dH^{-s,0}}.
\end{align*}

While we get, by applying Lemma \ref{L2}
and $\dive\vv v=0,$ that
\beno
\|\D_{k'}^\h v^3\|_{L^2_\h(L^\infty_\v)}\lesssim \|\D_{k'}^\h v^3\|_{L^2}^{\f12}\|\D_{k'}^\h\p_3 v^3\|_{L^2}^{\f12}
\lesssim 2^{-\f{k'}2}\|\D_{k'}^\h\na_\h\vv v\|_{L^2}\lesssim c_{k'}(t)2^{k'\left(s-\f12\right)}\|\na_\h\vv v\|_{\dH^{-s,0}},
\eeno
so that by applying Lemma \ref{L2} once again, for $s<1,$ we infer
\begin{align*}
\bigl|\left(\D_k^\h({R}^\h(v^3,{\pa_3\vv v}))\ |\ \D_k^\h\vv v\right)\bigr|
\lesssim &2^k\sum_{k'\geq k-3}\|\D_{k'}^\h v^3\|_{L^2_\h(L^\infty_\v)}\|\wt{\D}_{k'}^\h\pa_3\vv v\|_{L^2}\|\D_k^\h\vv v\|_{L^2}\\
\lesssim &2^k\Bigl(\sum_{k'\geq k-3} c_{k'}(t)2^{k'(s-1)}\Bigr)c_k(t)2^{ks}\|\pa_3\vv v^\h\|_{\dH^{\f12,0}}\|\na_\h\vv v\|_{\dH^{-s,0}}
\|\vv v\|_{\dH^{-s,0}}
\\
\lesssim &c_k^2(t)2^{2ks}\|\pa_3 \vv v\|_{\dH^{\f12,0}}\|\vv v\|_{\dH^{-s,0}}
\|\na_\h\vv v\|_{\dH^{-s,0}}.
\end{align*}

Then in view of \eqref{S3eq13a}, we deduce that
\beq \label{S4eq1}
\bigl|\left(\D_k^\h(v^3\pa_3\vv v)\ |\ \D_k^\h\vv v\right)\bigr|\lesssim c_k^2(t)2^{-2ks}\|\pa_3 \vv v\|_{\dH^{\f12,0}}\|\vv v\|_{\dH^{-s,0}}
\|\na_\h\vv v\|_{\dH^{-s,0}}.
\eeq

By inserting the estimates \eqref{S3eq13} and \eqref{S4eq1} into \eqref{S3eq12}, and then multiplying the
resulting inequality by $2^{2ks}$ and summing up $k$ over $\Z,$ we arrive at
\beq \label{S4eq2}
\begin{split}
\f12\f{d}{dt}\|\vv v(t)\|_{\dH^{-s,0}}^2+\|\na_\h\vv v(t)\|_{\dH^{-s,0}}^2\lesssim C\Bigl(&\|\vv v\|_{\cB^{0,\f12}}^{\f12}\|\na_\h \vv v\|_{\cB^{0,\f12}}^{\f12}\|\vv v\|_{\dH^{-s,0}}^{\f12}
\|\na_\h\vv v\|_{\dH^{-s,0}}^{\f32}\\
&+\bigl(\|\na_\h \vv v\|_{\cB^{0,\f12}}+\|\pa_3 \vv v\|_{\dH^{\f12,0}}\bigr)\|\vv v\|_{\dH^{-s,0}}
\|\na_\h\vv v\|_{\dH^{-s,0}}\Bigr),
\end{split}
\eeq
which leads to \eqref{Estimate for v in H -s}. This finishes the proof of Lemma \ref{lemma for v in H -s}.
\end{proof}

\subsection{Proof of Theorem \ref{decay theorem}} We are now in a position to complete the proof of
Theorem \ref{decay theorem} which
  relies on the continuity argument and Lemmas \ref{lemma for na h v}, \ref{lemma for partial 3 v} and
 \ref{lemma for v in H -s}.

\begin{proof}[Proof of Theorem \ref{decay theorem}] Under the assumption of \eqref{initial ansatz}, we deduce
from Theorem \ref{global existence thm} that the system \eqref{ANS}
has  a unique global solution which satisfies
\beq
\label{S4eq2a}
\|\vv v\|_{{L}^\infty(\R^+;\dH^{0,s_1})}^2+ \|\na_\h\vv v\|_{L^2(\R^+;\dH^{0,s_1})}^2\leq C\|\vv v_0\|_{\dH^{0,s_1}}^2.
 \eeq
  Next, we shall only present the {\it a priori} estimate for the smooth enough solution of the system \eqref{ANS}.

In what follows, we divide the proof into the following steps:

\smallskip

\no{\bf Step 1. Ansatz for the continuity argument.}

\smallskip

We denote
\beq\label{S4eq4}
T^\star\eqdefa \sup\Bigl\{ \ T>0:\,\, \|\p_3\vv v\|_{L^\infty_T(\dot{H}^{-\f12,0})}^2+\|\p_3\vv v\|_{ L^2_T(\dot{H}^{\f12,0})}^2\leq
\e\ \Bigr\},
\eeq
where $\e$ is a small enough positive constant which will be determined later on.

 Then for $t\leq T^\star$ and $s\in (0,1),$ by using Gronwall's inequality, we deduce from \eqref{S3eq1} and \eqref{Estimate for v in H -s}
that
\beq\label{S4eq3}\begin{aligned}
\|\vv v\|_{L^\infty_t(\dot{H}^{-s,0})}^2+\|\na_{\h}\vv v\|_{L^2_t(\dot{H}^{-s,0})}^2\leq & \|\vv v_0\|_{\dot{H}^{-s,0}}^2
\exp\Bigl({C}\bigl(\|\na_{\h}\vv v\|_{L^2_t(\cB^{0,\f12})}^2
+\|\p_3\vv v\|_{L^2_t(\dH^{\f12,0})}^2\big)\Bigr)\\
\leq & \|\vv v_0\|_{\dot{H}^{-s,0}}^2
\exp\Bigl({C}\bigl(c_0^2
+\e\big)\Bigr)\leq e\|\vv v_0\|_{\dot{H}^{-s,0}}^2,
\end{aligned}\eeq
as long as $c_0$ in \eqref{initial ansatz} and $ \e$ in \eqref{S4eq4} are so small that $Cc_0^2\leq \f12$ and $C\e\leq \f12.$

\smallskip

\no{\bf Step 2. The decay estimate of $\|\vv v(t)\|_{L^2}$.}
\smallskip

Due to $\dive \vv v=0,$ we get, by taking $L^2$-inner product of the $\vv v$ equation of \eqref{ANS} with $\vv v,$ that
\beq \label{S4eq5}
\f12\f{d}{dt}\|\vv v(t)\|_{L^2}^2+\|\na_\h\vv v\|_{L^2}^2=0.
\eeq
While by applying H\"older's inequality in the frequency space and  using \eqref{S4eq3}, we find
\beno
\|\vv v(t)\|_{L^2}\leq\|\vv v(t)\|_{\dH^{-s,0}}^{\f{1}{1+s}}\|\na_{\h}\vv v(t)\|_{L^2}^{\f{s}{1+s}}
\leq e^{\f{1}{1+s}}\|\vv v_0\|_{\dH^{-s,0}}^{\f{1}{1+s}}\|\na_{\h}\vv v(t)\|_{L^2}^{\f{s}{1+s}},
\eeno
which implies
\beq\label{s22}
\|\na_{\h}\vv v(t)\|_{L^2}^2\geq \f{\|\vv v(t)\|_{L^2}^{\f{2(1+s)}{s}}}{(e\|\vv v_0\|_{\dH^{-s,0}})^{\f{2}{s}}}.
\eeq
Thanks to \eqref{S4eq5} and \eqref{s22}, we infer
\beno
\f{d}{dt}\|\vv v(t)\|_{L^2}^2+\f{2}{(e\|\vv v_0\|_{\dH^{-s,0}})^{\f{2}{s}}}\Bigl(\|\vv v(t)\|_{L^2}^2\Bigr)^{1+\f{1}{s}}\leq 0,
\eeno
Then for any $t\leq T^\star,$ we obtain
\beq \label{S4eq6}
\begin{split}
\|\vv v(t)\|_{L^2}^2\leq\Bigl(\f{1}{\|\vv v_0\|_{L^2}^{-\f{2}{s}}+ 2s^{-1}t(e\|\vv v_0\|_{\dH^{-s,0}})^{-\f{2}{s}}}\Bigr)^{s}
&\leq C A_s\w{t}^{-s}\with\\
& A_s\eqdefa \|\vv v_0\|_{L^2}^2+\|\vv v_0\|_{\dH^{-s,0}}^2.
 \end{split}
\eeq

\smallskip
\no{\bf Step 3. The decay estimate of $\|\na_{\h}\vv v(t)\|_{L^2}$.}
\smallskip

Motivated by the study of the decay-in-time estimate for the derivatives of the global solutions to classical
Navier-Stokes system (see \cite{HM06} for instance),
 for any $0\leq t_0<t\leq T^\star,$ we get, by multiplying $t-t_0$ to \eqref{Estimate for na h v 1}, that
\begin{align*}
&\f{d}{dt}\bigl[(t-t_0)\|\na_{\h}\vv v\|_{L^2}^2\bigr]+\bigl[(t-t_0)\|\D_{\h}\vv v\|_{L^2}^2\bigr]\leq \|\na_{\h}\vv v\|_{L^2}^2+ {C}
\bigl(\|\na_{\h}\vv v\|_{\cB^{0,\f12}}^2
+\|\p_3\vv v\|_{\dH^{\f12,0}}^2\big)(t-t_0)\|\na_{\h}\vv v\|_{L^2}^2.
\end{align*}
By applying Gronwall's inequality and using \eqref{initial ansatz} and \eqref{S4eq4} for sufficiently small $c_0$ and $\e,$
we find
\beq \label{S4eq7}
\begin{aligned}
&(t-t_0)\|\na_{\h}\vv v(t)\|_{L^2}^2{+\int_{t_0}^t(t'-t_0)\|\D_{\h}\vv v(t')\|_{L^2}^2\,dt'}\\
&\leq  \int_{t_0}^t\|\na_{\h}\vv v(t')\|_{L^2}^2\,d t'\exp\Bigl( {C}
\bigl(\|\na_{\h}\vv v\|_{L^2_t(\cB^{0,\f12})}^2
+\|\p_3\vv v\|_{L^2_t(\dH^{\f12,0})}^2\big)\Bigr)\\
&\leq e  \int_{t_0}^t\|\na_{\h}\vv v(t')\|_{L^2}^2\,d t'.
\end{aligned}\eeq
On the other hand, we get, by integrating \eqref{S4eq5} over $[\tau,t]$ for any $0\leq\tau<t,$ that
\beno
\int_\tau^t\|\na _\h\vv v(t')\|_{L^2}^2\,dt'\leq \f12\|\vv v(\tau)\|_{L^2}^2,
\eeno
from which, \eqref{S4eq6} and \eqref{S4eq7}, we deduce that
\beq \label{S4eq8}\begin{aligned}
&\|\na_{\h}\vv v(t)\|_{L^2}^2+{\f{2}{t}\int_{\f{t}{2}}^t(t'-\f{t}{2})\|\D_{\h}\vv v\|_{L^2}^2dt'}\\
&\quad\leq \f{2e}t\int_{\f{t}2}^t\|\na_{\h}\vv v(t')\|_{L^2}^2\,d t'\leq \f{e}t\|\vv v(t/2)\|_{L^2}^2\leq
C_sA_s\w{t}^{-s}t^{-1}\quad\forall\ t\leq T^\star.
\end{aligned}\eeq
This together with \eqref{S4eq6} ensures that \eqref{S1eq3a} holds for $t\leq T^\star.$

\smallskip
\no
{\bf Step 4. The decay estimate of $\|v^3(t)\|_{L^2}.$}

\smallskip

By taking space divergence to the $\vv v$ equation of \eqref{ANS}, we write
\beno
-\D p=\na\cdot(\vv v\cdot\na\vv v)=\sum_{j,k=1}^3\p_j\p_k(v^jv^k),
\eeno
which yields
\beq\label{expression of p}
\na p=\na(-\D)^{-1}\sum_{j,k=1}^3\p_j\p_k(v^jv^k).
\eeq

While by applying Fourier transform to the $v^3$ equation of \eqref{ANS}, we find
\beno
\p_t\widehat{v}^3+|\xi_{\h}|^2\widehat{v}^3=-\mathcal{F}\left(\vv v\cdot \na v^3+\p_3p\right).
\eeno
Using $\div\vv v=0$ and \eqref{expression of p}, we infer
\beq\label{expression of v 3}\begin{aligned}
\widehat{v}^3(t,\xi)&=e^{-t|\xi_{\h}|^2}\widehat{v}_0^3(\xi)+\int_0^te^{-(t-\tau)|\xi_{\h}|^2}F_1(\tau,\xi)d\tau
+\int_0^te^{-(t-\tau)|\xi_{\h}|^2}F_2(\tau,\xi)d\tau\\
&
\eqdefa \widehat{v_L^3}(t,\xi)+\widehat{v_{N1}^3}(t,\xi)+\widehat{v_{N2}^3}(t,\xi) \with\\
F_1(t,\xi)&\eqdefa -i|\xi|^{-2}|\xi_{\h}|^2\sum_{j=1}^3\xi_j\widehat{v^j v^3}(t,\xi) \andf F_2(t,\xi)\eqdefa i\xi_3|\xi|^{-2}\sum_{j=1}^3\sum_{k=1}^2\xi_j\xi_k\widehat{v^jv^k}(t,\xi).
\end{aligned}\eeq

By taking $L^2$-inner product of \eqref{expression of v 3} with $\widehat{v}^3(t,\xi)$, we obtain
\beno
\|\widehat{v}^3(t,\cdot)\|_{L^2}^2\leq\|\widehat{v^3_L}(t,\cdot)\|_{L^2}\|\widehat{v}^3(t,\cdot)\|_{L^2}
+\left(\widehat{v^3_{N1}}(t,\xi)+\widehat{v^3_{N2}}(t,\xi)\,|\,\widehat{v}^3(t,\xi)\right),
\eeno
from which, we infer
\beq\label{A1}
\|\widehat{v}^3(t,\cdot)\|_{L^2}^2\leq\|\widehat{v^3_L}(t,\cdot)\|_{L^2}^2
+2\bigl|\bigl(\widehat{v^3_{N1}}(t,\xi)+\widehat{v^3_{N2}}(t,\xi)\,|\,\widehat{v}^3(t,\xi)\bigr)\bigr|.
\eeq

\smallskip

\begin{itemize}
\item[(1)] { Decay estimate of $\|v^3_L(t)\|_{L^2}$.}
\end{itemize}
\smallskip

 By virtue of the definition of $\widehat{v^3_L}(t,\xi)$ in \eqref{expression of v 3}, we write
\beq\label{s25}\begin{aligned}
\|\widehat{v^3_L}(t)\|_{L^2}^2&=\int_{\R^3}e^{-2 t|\xi_{\h}|^2}|\widehat{v}_0^3(\xi)|^2\,d\xi\\
&=\int_{|\xi_{\h}|\leq|\xi_3|}e^{-2t|\xi_{\h}|^2}|\widehat{v}_0^3(\xi)|^2\,d\xi
+\int_{|\xi_{\h}|\geq|\xi_3|}e^{-2 t|\xi_{\h}|^2}|\widehat{v}_0^3(\xi)|^2\,d\xi\\
&\eqdefa A_1+A_2.
\end{aligned}\eeq
Due to $s\in (0,1),$ we  get, by using $\div\vv v_0=0$, that
\beno\begin{aligned}
A_1&=\int_{|\xi_{\h}|\leq|\xi_3|}e^{-2 t|\xi_{\h}|^2}\f{1}{|\xi_3|^2}\cdot|i\xi_3\widehat{v}_0^3(\xi)|^2d\xi\\
&\leq\int_{\R^2}\int_{|\xi_3|\geq|\xi_{\h}|}\f{|\widehat{\vv v}_0^{\h}(\xi_{\h},\xi_3)|^2}{|\xi_3|^2}d\xi_3
\cdot e^{-2t|\xi_{\h}|^2}|\xi_{\h}|^2d\xi_{\h}\\
&\lesssim\int_{\R^2}e^{-2 t|\xi_{\h}|^2}|\xi_{\h}|^{3s+\f12}\cdot|\xi_{\h}|^{-2s}\int_{\R}|\widehat{\vv v_0^{\h}}(\xi_{\h},\xi_3)|^2|\xi_3|^{-\left(s+\f12\right)}\,d\xi_3\,d\xi_{\h}\\
&\lesssim t^{-\left(\f{3s}{2}+\f14\right)}\int_{\R^3}|\widehat{\vv v_0^{\h}}(\xi_{\h},\xi_3)|^2|\xi_{\h}|^{-2s}|\xi_3|^{-\left(s+\f12\right)}d\xi,
\end{aligned}\eeno
which implies
\beq\label{s23}
A_1\lesssim t^{-\left(\f{3s}{2}+\f14\right)}\|\vv v_0^{\h}\|_{\dH^{-s,-\f{s}{2}-\f14}}^2.
\eeq

While we observe that
\beno\begin{aligned}
A_2&=\int_{|\xi_{\h}|\geq|\xi_3|}e^{-2 t|\xi_{\h}|^2}|\xi_{\h}|^{2s}|\xi_3|^{s+\f12}\cdot|\xi_{\h}|^{-2s}|\xi_3|^{-\left(s+\f12\right)}|\widehat{v}_0^3(\xi)|^2\,d\xi\\
&\lesssim\int_{\R^3}e^{-2 t|\xi_{\h}|^2}|\xi_{\h}|^{3s+\f12}\cdot|\xi_{\h}|^{-2s}|\xi_3|^{-\left(s+\f12\right)}|\widehat{v}_0^3(\xi)|^2\,d\xi\\
&\lesssim t^{-\left(\f{3s}{2}+\f14\right)}\int_{\R^3}|\widehat{v}_0^3(\xi_{\h},\xi_3)|^2|\xi_{\h}|^{-2s}|\xi_3|^{-\left(s+\f12\right)}d\xi,
\end{aligned}\eeno
which gives rise to
\beq\label{s24}
A_2\lesssim t^{-\left(\f{3s}{2}+\f14\right)}\|v_0^3\|_{\dH^{-s,-\f{s}{2}-\f14}}^2.
\eeq

By inserting the estimates \eqref{s23} and \eqref{s24} into  \eqref{s25}, we obtain
\beno
\|v^3_L(t)\|_{L^2}\lesssim t^{-\left(\f{3s}{4}+\f18\right)}\|\vv v_0\|_{\dH^{-s,-\f{s}{2}-\f14}},
\eeno
which together with the fact that $\|v^3_L(t)\|_{L^2}\leq \|v_0^3\|_{L^2}$ ensures that
\beq\label{decay estimate for the linear part of v 3}
\|v^3_L(t)\|_{L^2}\leq C \bigl(\|v_0^3\|_{L^2}+ \|\vv v_0\|_{\dH^{-s,-\f{s}{2}-\f14}}\bigr)\w{t}^{-\left(\f{3s}{4}+\f18\right)}.
\eeq

\smallskip

\begin{itemize}
\item[(2)]
{ Decay estimate for  term involving $v^3_{N1}+v^3_{N2}$.}
\end{itemize}

\smallskip

By virtue  of the definition of $F_2(t,\xi)$ in \eqref{expression of v 3}, we get, by using $\div\vv v=0$, that
\beno\begin{aligned}
\bigl(\widehat{v^3_{N2}}(t,\xi)\,|\,\widehat{v}^3(t,\xi)\bigr)
&=
-\Bigl(\int_0^te^{-(t-\tau)|\xi_{\h}|^2}|\xi|^{-2}\sum_{j=1}^3\sum_{k=1}^2\xi_j\xi_k\widehat{v^jv^k}(\tau,\xi)\,d\tau\,|\,i\xi_3\widehat{v}^3(t,\xi)\Bigr)\\
&=\Bigl(\int_0^te^{- (t-\tau)|\xi_{\h}|^2}i\xi_{\h}|\xi|^{-2}\sum_{j=1}^3\sum_{k=1}^2\xi_j\xi_k\widehat{v^jv^k}(\tau,\xi)\,d\tau\,|\,\widehat{\vv v^{\h}}(t,\xi)\Bigr),
\end{aligned}\eeno
from which and the definition of $F_1(t,\xi)$ in \eqref{expression of v 3}, we infer
\beq\label{A2}
\bigl|\bigl(\widehat{v^3_{N1}}(t,\xi)+\widehat{v^3_{N2}}(t,\xi)\,|\,\widehat{v}^3(t,\xi)\bigr)\bigr|
\leq \bigl\|\int_0^te^{-(t-\tau)|\xi_{\h}|^2}|\xi_{\h}||\xi|^{-1}\widehat{\vv v\otimes\vv v}(\tau,\xi)\,d\tau\bigr\|_{L^2_{\xi}}\|\widehat{\na_\h\vv v}(t)\|_{L^2}.
\eeq
Yet by applying H\"older's inequalities, we have
\beno\begin{aligned}
&\bigl\|\int_0^te^{- (t-\tau)|\xi_{\h}|^2}|\xi_{\h}||\xi|^{-1}\widehat{\vv v\otimes\vv v}(\tau,\xi)d\tau\bigr\|_{L^2_{\xi}}\\
&\leq \bigl\|\||\xi|^{-1}\|_{L^2_{\xi_3}} \bigl\|\int_0^te^{- (t-\tau)|\xi_{\h}|^2}|\xi_{\h}|\widehat{\vv v\otimes\vv v}(\tau,\xi)\,d\tau\bigr\|_{L^\infty_{\xi_3}}\bigr\|_{L^2_{\xi_{\h}}},
\end{aligned}\eeno
Observing that for $\delta>0$,
\beno\begin{aligned}
&\|e^{- (t-\tau)|\xi_{\h}|^2}|\xi_{\h}|^{2\delta}\|_{L^\infty_{\xi_{\h}}}\lesssim (t-\tau)^{-\delta},\\
&\||\xi|^{-1}\|_{L^2_{\xi_3}}=\Bigl(\int_{\R}\f{1}{|\xi_{\h}|^2+|\xi_3|^2}\,d\xi_3\Bigr)^{\f12}\lesssim|\xi_{\h}|^{-\f12},
\end{aligned}\eeno
so that we get, by using Young's inequality, that
\beq\label{A3}\begin{aligned}
\bigl\|\int_0^te^{- (t-\tau)|\xi_{\h}|^2}|\xi_{\h}||\xi|^{-1}\widehat{\vv v\otimes\vv v}(\tau,\xi)\bigr\|_{L^2_{\xi}}
\lesssim & \bigl\|\int_0^t(t-\tau)^{-\delta}\bigl|\xi_{\h}\bigr|^{\f12-2\delta}|\widehat{\vv v\otimes\vv v}(\tau,\xi)|\,d\tau\bigr\|_{L^2_{\xi_\h}(L^\infty_{\xi_3})}\\
\lesssim &\int_0^t(t-\tau)^{-\delta}\bigl\||D_{\h}|^{\f12-2\delta}(\vv v\otimes\vv v)(\tau,\cdot)\bigr\|_{L^1_\v(L^2_\h)}\,d\tau,
\end{aligned}\eeq
where $|D_h|^s$ is the Fourier multiplier with symbol $|\xi_\h|^s,$ that is 
\beno
|D_h|^sf\eqdefa\mathcal{F}^{-1}(|\xi_h|^s\hat{f}(\xi)),\quad\text{for } f\in\mathcal{S}'(\R^3).
\eeno
While it follows from the law of product in Sobolev spaces that for any fixed $\tau,x_3$ and for any $\d\in (0,3/4),$
\beno\begin{aligned}
\bigl\||D_{\h}|^{\f12-2\delta}(\vv v\otimes\vv v)(\tau,\cdot, x_3)\bigr\|_{L^2_\h}\lesssim \|\vv v(\tau,\cdot,x_3\|_{\dH^{\f34-\d}_\h}^2,
\end{aligned}\eeno
 so that
\beq\label{S4eq9}
\bigl\||D_{\h}|^{\f12-2\delta}(\vv v\otimes\vv v)(\tau,\cdot)\bigr\|_{L^1_\v(L^2_\h)}\lesssim \|\vv v(\tau,\cdot)\|_{\dH^{\f34-\d,0}}^2.
\eeq
By substituting \eqref{S4eq9} into \eqref{A3} and using \eqref{S4eq6} and \eqref{S4eq8}, we find
\beno\begin{aligned}
&\bigl\|\int_0^t e^{-(t-\tau)|\xi_{\h}|^2}|\xi_{\h}||\xi|^{-1}|\widehat{\vv v\otimes\vv v}(\tau,\xi)\,d\tau\bigr\|_{L^2_{\xi}}
\lesssim  \int_0^t(t-\tau)^{-\delta}\|\vv v(\tau)\|_{\dH^{\f34-\d,0}}^2\,d\tau\\
&\qquad\lesssim \int_0^t(t-\tau)^{-\delta}\|\vv v(\tau)\|_{L^2}^{\f12+2\d}\|\na_\h\vv v(\tau)\|_{L^2}^{\f32-2\d}\,d\tau
\leq CA_s\int_0^t(t-\tau)^{-\delta}\w{\tau}^{-s}\tau^{-\left(\f34-\d\right)}\,d\tau.
\end{aligned}\eeno
Notice that for $\delta\in(s-\f14,\f34)$, one has
\beno\begin{aligned}
\int_0^t(t-\tau)^{-\delta}\w{\tau}^{-s}\tau^{-\left(\f34-\d\right)}\,d\tau
\leq &\int_0^{\f{t}2}(t-\tau)^{-\delta}\w{\tau}^{-s}\tau^{-\left(\f34-\d\right)}\,d\tau\\
&+\int_{\f{t}2}^t(t-\tau)^{-\delta}\w{\tau}^{-s}\tau^{-\left(\f34-\d\right)}\,d\tau
\lesssim \w{t}^{-s}t^{\f14},
\end{aligned}\eeno
we find
\beno
\bigl\|\int_0^t e^{-(t-\tau)|\xi_{\h}|^2}|\xi_{\h}||\xi|^{-1}\widehat{\vv v\otimes\vv v}(\tau,\xi)\,d\tau\bigr\|_{L^2_{\xi}}
\leq CA_s\w{t}^{-s}t^{\f14},
\eeno
which together with \eqref{S4eq8} and \eqref{A2} ensures that
\beq\label{S4eq10}
\bigl|\bigl(\widehat{v^3_{N1}}(t,\xi)+\widehat{v^3_{N2}}(t,\xi)\,|\,\widehat{v}^3(t,\xi)\bigr)\bigr|
\leq CA_s^{\f32}\w{t}^{-\f32s}t^{-\f14}.
\eeq

By inserting the estimates \eqref{decay estimate for the linear part of v 3} and \eqref{S4eq10} into  \eqref{A1},  we achieve
\beq\label{A8}
\|v^3(t)\|_{L^2}^2\leq CB_s\w{t}^{-\f32s}t^{-\f14}\with B_s\eqdefa\|v_0^3\|_{L^2}^2+ \|\vv v_0\|_{\dH^{-s,-\f{s}{2}-\f14}}^2+A_s^{\f32},
\eeq
 for any $t\leq T^\star$.

\smallskip
\no
{\bf Step 5. The decay estimate of $\|\na_\h v^3(t)\|_{L^2}.$}
\smallskip

We first get, by a similar derivation of \eqref{A2}, that
\beq \label{S4eq11}
\f12\f{d}{dt}\|v^3(t)\|_{L^2}^2+\|\na_\h v^3\|_{L^2}^2\lesssim \bigl\||D_\h||D|^{-1}(\vv v\otimes\vv v)\bigr\|_{L^2}\|\na_\h\vv v\|_{L^2}.
\eeq
Yet along the same line to the proof of \eqref{S4eq10}, we find
\beno\begin{aligned}
&\bigl\||D_\h||D|^{-1}(\vv v\otimes\vv v)\bigr\|_{L^2}=\bigl\||\xi_\h||\xi|^{-1}\widehat{\vv v\otimes\vv v}\bigr\|_{L^2}
\leq \bigl\|\||\xi|^{-1}\|_{L^2_{\xi_3}}|\xi_\h|\|\widehat{\vv v\otimes\vv v}\|_{L^\infty_{\xi_3}}\bigr\|_{L^2_{\xi_\h}}\\
&\qquad\lesssim  \bigl\||\xi_\h|^{\f12}\widehat{\vv v\otimes\vv v}\bigr\|_{L^2_{\xi_\h}(L^\infty_{\xi_3})}
\lesssim \bigl\||D_\h|^{\f12}({\vv v\otimes\vv v})\bigr\|_{L^1_\v(L^2_\h)}
\lesssim \|\vv v\|_{\dH^{\f34,0}}^2\lesssim \|\vv v\|_{L^2}^{\f12}\|\na_\h\vv v\|_{L^2}^{\f32}.
\end{aligned}\eeno
By inserting the above estimate into \eqref{S4eq11} and integrating the resulting inequality over $[t/2,t],$ we achieve
\beno\begin{aligned}
\f12\|v^3(t)\|_{L^2}^2+\int_{\f{t}2}^t\|\na_\h v^3(t')\|_{L^2}^2\,dt'\leq &\f12\|v^3(t/2)\|_{L^2}^2
+C\int_{\f{t}2}^t\|\vv v(t')\|_{L^2}^{\f12}\|\na_\h\vv v(t')\|_{L^2}^{\f52}\,dt',
\end{aligned}\eeno
which together with \eqref{S4eq6}, \eqref{S4eq8} and \eqref{A8} implies
\beq \label{S4eq12}
\f12\|v^3(t)\|_{L^2}^2+\int_{\f{t}2}^t\|\na_\h v^3(t')\|_{L^2}^2\,dt'\leq B_s\w{t}^{-\f32s}t^{-\f14}\quad \forall\ t\leq T^\star.
\eeq

On the other hand, by taking $L^2$-inner product of the $v^3$ equation of \eqref{ANS} with $\D_\h v^3,$ we
deduce from a similar derivation of \eqref{A2} that
\beq
 \label{S4eq13}
\f12\f{d}{dt}\|\na_\h v^3(t)\|_{L^2}^2+\|\D_\h v^3\|_{L^2}^2\lesssim \bigl\||D_\h|^2|D|^{-1}(\vv v\otimes\vv v)\bigr\|_{L^2}\|\D_\h\vv v\|_{L^2}.
\eeq
We observe that
\beno\begin{aligned}
\bigl\||D_\h|^2|D|^{-1}(\vv v\otimes\vv v)\bigr\|_{L^2}=&\bigl\||\xi|^{-1}|\xi_\h|^2\widehat{\vv v\otimes\vv v}\bigr\|_{L^2}
\leq \bigl\|\||\xi|^{-1}\|_{L^2_{\xi_3}}|\xi_\h|^2\|\widehat{\vv v\otimes\vv v}\|_{L^\infty_{\xi_3}}\bigr\|_{L^2_{\xi_\h}}\\
\lesssim&  \bigl\||\xi_\h|^{\f32}\widehat{\vv v\otimes\vv v}\bigr\|_{L^2_{\xi_\h}(L^\infty_{\xi_3})}
\lesssim \bigl\||D_\h|^{\f12}({\vv v\otimes\na_\h \vv v})\bigr\|_{L^1_\v(L^2_\h)}\\
\lesssim &\|\vv v\|_{\dH^{\f34,0}}\|\na_\h\vv v\|_{\dH^{\f34,0}}\lesssim \|\vv v\|_{L^2}^{\f14}\|\na_\h\vv v\|_{L^2}\|\D_\h\vv v\|_{L^2}^{\f34}.
\end{aligned}\eeno
By inserting the above estimate into \eqref{S4eq13}, we find
\beno\begin{aligned}
\f12\f{d}{dt}\|\na_\h v^3(t)\|_{L^2}^2+\|\D_\h v^3\|_{L^2}^2\leq& C\|\vv v\|_{L^2}^{\f14}\|\na_\h\vv v\|_{L^2}\|\D_\h\vv v\|_{L^2}^{\f74}.
\end{aligned}\eeno
For any $0\leq t_0\leq t,$ by
multiplying the above inequality by $t-t_0$
and then integrating the resulting inequality over $[t_0,t],$ we obtain
\beno\begin{aligned}
&(t-t_0)\|\na_\h v^3(t)\|_{L^2}^2\leq \int_{t_0}^t\|\na_\h v^3(t')\|_{L^2}^2\,dt'
+C\int_{t_0}^t(t'-t_0)\|\vv v(t')\|_{L^2}^{\f14}\|\na_\h\vv v(t')\|_{L^2}\|\D_\h\vv v(t')\|_{L^2}^{\f74}\,dt'\\
&\qquad\lesssim \int_{t_0}^t\|\na_\h v^3(t')\|_{L^2}^2\,dt'+{\Bigl(\int_{t_0}^t(t'-t_0)\|\vv v(t')\|_{L^2}^2\|\na_\h\vv v(t')\|_{L^2}^8\,dt'\Bigr)^{\f18}
\Bigl(\int_{t_0}^t(t'-t_0)\|\D_\h\vv v(t')\|_{L^2}^2\,dt'\Bigr)^{\f78}.}
\end{aligned}\eeno
Taking $t_0=\f{t}2$  in the above inequality and using \eqref{S4eq6}, \eqref{S4eq8} and \eqref{S4eq12}, we find
\beq \label{S4eq14}
t\|\na_\h v^3(t)\|_{L^2}^2\leq CB_s\w{t}^{-\f32s}t^{-\f14} \quad \forall\ t\leq T^\star.
\eeq
 This together with \eqref{A8} ensures that  \eqref{S1eq4} holds for $t\leq T^\star.$

\smallskip
\no{\bf Step 6. Closing of the continuity argument.}
\smallskip

To close the continuity argument, we shall use \eqref{Estimate for partial 3 v 1} in  Lemma \ref{lemma for partial 3 v}.
We first observe that for any $s_1>2,$
\beno\begin{aligned}
\|\p_3^2\vv v\|_{L^2}\leq &\|\p_3\vv v\|_{L^2}^{\f{s_1-2}{s_1-1}}\|\vv v\|_{\dH^{0,s_1}}^{\f{1}{s_1-1}}
\leq\|\p_3\vv v\|_{\dH^{-\f12,0}}^{\f{s_1-2}{2(s_1-1)}}\|\p_3\vv v\|_{\dH^{\f12,0}}^{\f{s_1-2}{2(s_1-1)}}\|\vv v\|_{\dH^{0,s_1}}^{\f{1}{s_1-1}},
\end{aligned}\eeno
so that we get, by applying Young's inequality, that
\beno\begin{aligned}
\|\na_{\h}& v^3\|_{L^2}^{\f12}\|\dive_{\h}\vv v^\h\|_{L^2}^{\f12}\|\p_3^2\vv v\|_{L^2}\|\p_3\vv v\|_{\dH^{-\f12,0}}\\
\leq &\|\na_{\h} v^3\|_{L^2}^{\f12}\|\dive_{\h}\vv v^\h\|_{L^2}^{\f12}\|\p_3\vv v\|_{\dH^{-\f12,0}}^{\f{3s_1-4}{2(s_1-1)}}\|\vv v\|_{\dH^{0,s_1}}^{\f{1}{s_1-1}}\|\p_3\vv v\|_{\dH^{\f12,0}}^{\f{s_1-2}{2(s_1-1)}}\\
\leq &\f12 \|\p_3\vv v\|_{\dH^{\f12,0}}^2+C\bigl(\|\na_{\h} v^3\|_{L^2}\|\dive_{\h}\vv v^\h\|_{L^2}\bigr)^{\f{2(s_1-1)}{3s_1-2}}\bigl(\|\p_3\vv v\|_{\dH^{-\f12,0}}^{2}+\|\vv v\|_{\dH^{0,s_1}}^{2}\bigr).
\end{aligned}\eeno
Substituting the above inequality into \eqref{Estimate for partial 3 v 1} gives rise to
\beno\begin{aligned}
\f{d}{dt}\|\p_3\vv v\|_{\dot{H}^{-\f12,0}}^2+\|\p_3\vv v\|_{\dot{H}^{\f12,0}}^2
&\leq C\|\na_{\h}\vv v\|_{\cB^{0,\f12}}^2\|\p_3\vv v\|_{\dot{H}^{-\f12,0}}^2\\
&\quad
+C\bigl(\|\na_{\h} v^3\|_{L^2}\|\dive_{\h}\vv v^\h\|_{L^2}\bigr)^{\f{2(s_1-1)}{3s_1-2}}\bigl(\|\p_3\vv v\|_{\dH^{-\f12,0}}^{2}+\|\vv v\|_{\dH^{0,s_1}}^{2}\bigr).
\end{aligned}\eeno
Applying Gronwall's inequality leads to
\beq\label{S4eq15}
\begin{split}
\|\p_3\vv v&\|_{L^\infty_t(\dot{H}^{-\f12,0})}^2+\|\p_3\vv v\|_{L^2_t(\dot{H}^{\f12,0})}^2\\
\leq & C\Bigl(\|\p_3\vv v_0\|_{\dot{H}^{-\f12,0}}^2+\int_0^t\bigl(\|\na_{\h}\vv v^3\|_{L^2}\|\dive_{\h}\vv v^\h\|_{L^2}\bigr)^{\f{2(s_1-1)}{3s_1-2}}\,dt'\|\vv v\|_{L^\infty_t(\dH^{0,s_1})}^{2}\Bigr)\\
&\qquad\qquad\times\exp\Bigl(C\|\na_{\h}\vv v\|_{L^2_t(\cB^{0,\f12})}^2+C\int_0^t\bigl(\|\na_{\h} v^3\|_{L^2}\|\dive_{\h}\vv v^\h\|_{L^2}\bigr)^{\f{2(s_1-1)}{3s_1-2}}\,dt'\Bigr).
\end{split}
\eeq
Yet for $t\leq T^\star,$ it follows from \eqref{S4eq8} and \eqref{S4eq14} that
\beno\begin{aligned}
\int_0^t\bigl(\|\na_{\h} v^3\|_{L^2}\|\dive_{\h}\vv v^\h\|_{L^2}\bigr)^{\f{2(s_1-1)}{3s_1-2}}\,dt'
\leq & C\bigl(A_sB_s\bigr)^{\f{s_1-1}{3s_1-2}}\int_0^t\Bigl(\w{\tau}^{-\f54s}\tau^{-\f98}\Bigr)^{\f{2(s_1-1)}{3s_1-2}}\,d\tau
\leq C\bigl(A_s B_s\bigr)^{\f{s_1-1}{3s_1-2}},
\end{aligned}\eeno
if
\beno\begin{aligned}
\left(\f98+\f54s\right){\f{2(s_1-1)}{3s_1-2}}>1\Leftrightarrow s>\f{1+3s_1}{10(s_1-1)}.
\end{aligned}\eeno
Therefore thanks to \eqref{S4eq2a}, we deduce from \eqref{S4eq15} that for $t\leq T^\star$
\beq \label{S4eq20}
\begin{split}
\|\p_3\vv v&\|_{L^\infty_t(\dot{H}^{-\f12,0})}^2+\|\p_3\vv v\|_{L^2_t(\dot{H}^{\f12,0})}^2\\
\leq & C\Bigl(\|\p_3\vv v_0\|_{\dot{H}^{-\f12,0}}^2+\|\vv v_0\|_{\dH^{0,s_1}}^{2}\bigl(A_s B_s\bigr)^{\f{s_1-1}{3s_1-2}}\Bigr)
\exp\Bigl(Cc_0+C\bigl(A_s B_s\bigr)^{\f{s_1-1}{3s_1-2}}\Bigr).
\end{split} \eeq
In particular, under the assumption \eqref{initial ansatz a}, we infer
\beq\label{S4eq16}
\begin{split}
\|\p_3\vv v&\|_{L^\infty_t(\dot{H}^{-\f12,0})}^2+\|\p_3\vv v\|_{L^2_t(\dot{H}^{\f12,0})}^2
\leq \f\e2\quad\mbox{for} \ t\leq T^\star,
\end{split}
\eeq
which contradicts with \eqref{S4eq4}. This in turn shows that $T^\star=\infty.$ Furthermore, \eqref{S4eq2a}, \eqref{S4eq3} and \eqref{S4eq20}
ensures \eqref{S1eq2}, and there hold \eqref{S1eq3a} and \eqref{S1eq4}. To complete the proof of Theorem \ref{decay theorem},
it remains to prove \eqref{S1eq3b}.

\smallskip

\no{\bf Step 7. The decay estimate of $\|\p_3\vv v(t)\|_{L^2}$.}
\smallskip

We first deduce from \eqref{S3eq1} and \eqref{S3eq10a} that
\beq\label{S4eq17}
\f{d}{dt}\|\p_3\vv v(t)\|_{L^2}^2+ \|\na_\h \p_3\vv v\|_{L^2}^2\leq {C}\|\na_\h \vv v\|_{\cB^{0,\f12}}^2\|\p_3\vv v\|_{L^2}^2.
 \eeq

Let us denote
\beno\begin{aligned}
&X(t)\eqdefa e^{-C\int_0^t\|\na_{\h}\vv v(\tau)\|_{\cB^{0,\f12}}^2\,d\tau}\|\p_3\vv v(t)\|_{L^2}^2\andf D(t)\eqdefa e^{-C\int_0^t\|\na_{\h}\vv v\|_{\cB^{0,\f12}}^2\,d\tau}\|\na_{\h}\p_3\vv v(t)\|_{L^2}^2.
\end{aligned}\eeno
Then we deduce from \eqref{S4eq17} that
\beq\label{s12}
\f{d}{dt}X(t)+ D(t)\leq 0.
\eeq
It follows from \eqref{S3eq1} that
\beno
\int_0^t\|\na_{\h}\vv v(\tau)\|_{\cB^{0,\f12}}^2\,d\tau\leq C\|\vv v_0\|_{\cB^{0,\f12}}^2\leq Cc_0^2,
\eeno
so that there holds
\beno
e^{-C\int_0^t\|\na_{\h}\vv v(\tau)\|_{\cB^{0,\f12}}^2\,d\tau}\geq e^{-Cc_0^2}\geq e^{-1} \quad\mbox{if}\ \ Cc_0^2\leq 1,
\eeno
which implies
\beq\label{s13}
e^{-1}\|\p_3\vv v(t)\|_{L^2}^2\leq X(t)\leq\|\p_3\vv v(t)\|_{L^2}^2,\andf
e^{-1}\|\na_{\h}\p_3\vv v(t)\|_{L^2}^2\leq D(t)\leq\|\na_{\h}\p_3\vv v(t)\|_{L^2}^2.
\eeq

On the other hand, we observe that
\beno
\|\p_3\vv v(t)\|_{L^2}\leq\|\p_3\vv v(t)\|_{\dot{H}^{-\f12,0}}^{\f23}\|\na_{\h}\p_3\vv v(t)\|_{L^2}^{\f13},
\eeno
from which and \eqref{S1eq2c}, \eqref{s13}, we infer
\beno
D(t)\geq e^{-1}\|\na_{\h}\p_3\vv v(t)\|_{L^2}^2\geq e^{-1}\f{\|\p_3\vv v(t)\|_{L^2}^6}{\|\p_3\vv v(t)\|_{\dot{H}^{-\f12,0}}^4}
\geq e^{-1}\f{X^3(t)}{\|\p_3\vv v(t)\|_{\dot{H}^{-\f12,0}}^4}\geq (e\cE_0^2)^{-1}X^3(t).
\eeno
Then we deduce from \eqref{s12} that
\beno
\f{d}{dt}X(t)+(e\cE_0^2)^{-1}X^3(t)\leq0,
\eeno
which implies
\beno
X(t)\leq\Bigl(\f{1}{X(0)^{-2}+(e\cE_0^2)^{-1} t}\Bigr)^{\f12}=
\Bigl(\f{1}{\|\p_3\vv v_0\|_{L^2}^{-4}+(e\cE_0^2)^{-1} t}\Bigr)^{\f12}\leq\Bigl(\|\p_3\vv v_0\|_{L^2}^2+\sqrt{e\cE_0^2}
\Bigr)\w{t}^{-\f12},
\eeno
This together with  \eqref{s13} ensures \eqref{S1eq3b}.

\medskip

As a consequence, we complete the proof of Theorem \ref{decay theorem}.
\end{proof}

\appendix

\setcounter{equation}{0}
\section{Tool box on Littlewood-Paley theory}

For the convenience of readers, we collect some basic facts on anisotropic
Littlewood-Paley theory in this section.
We first observe from Definition \ref{defanisob} and \eqref{defparaproduct} that for any $s_1, s_2\in\R,$
\beno
\|f\|_{\dH^{s_1,s_2}}^2\sim\sum_{k,\ell\in\Z}2^{2ks_1}2^{2\ell s_2}\|\D_k^\h\D_\ell^\v f\|_{L^2}^2.
\eeno

 We  recall the
following anisotropic version of Bernstein type lemma from \cite{BCD, CZ1, Paicu}:

\begin{lemma}\label{L2} {\sl Let $\cB_{\h}$ (resp.~$\cB_{\v}$) be a ball
of~$\R^2$ (resp.~$\R$), and~$\cC_{\h}$ (resp.~$\cC_{\v}$) a ring
of~$\R^2$ (resp.~$\R$); let~$1\leq p_2\leq p_1\leq \infty$ and
~$1\leq q_2\leq q_1\leq \infty.$ Then there hold:
\smallbreak\noindent If the support of~$\wh a$ is included
in~$2^k\cB_{\h}$, then
\[
\|\partial_{\h}^\alpha a\|_{L^{p_1}_{\h}(L^{q_1}_{\v})} \lesssim
2^{k\left(|\al|+2\left(\frac1{p_2}-\frac1{p_1}\right)\right)}
\|a\|_{L^{p_2}_{\h}(L^{q_1}_{\v})},\quad\text{for}\quad \p_{\h}=(\p_1,\p_2).
\]
If the support of~$\wh a$ is included in~$2^\ell\cB_{\v}$, then
\[
\|\partial_3^\beta a\|_{L^{p_1}_{\h}(L^{q_1}_{\v})} \lesssim
2^{\ell\left(\beta+\left(\frac1{q_2}-\frac1{q_1}\right)\right)} \|
a\|_{L^{p_1}_{\h}(L^{q_2}_{\v})}.
\]
If the support of~$\wh a$ is included in~$2^k\cC_{\h}$, then
\[
\|a\|_{L^{p_1}_{\h}(L^{q_1}_{\v})} \lesssim 2^{-kN} \|\partial_{\h}^N
a\|_{L^{p_1}_{\h}(L^{q_1}_{\v})}.
\]
If the support of~$\wh a$ is included in~$2^\ell\cC_{\v}$, then
\[
\|a\|_{L^{p_1}_{\h}(L^{q_1}_{\v})} \lesssim 2^{-\ell N} \|\partial_3^N
a\|_{L^{p_1}_{\h}(L^{q_1}_{\v})}.
\]}
\end{lemma}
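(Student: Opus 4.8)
The plan is to prove all four assertions in the standard way: rescale a single fixed auxiliary function adapted to the relevant ball or ring, reduce the problem to a convolution identity, and conclude with Young's inequality for Banach-space valued convolutions. No deep input is needed — the statement is also contained in \cite{BCD, CZ1, Paicu} — so the work is entirely the bookkeeping of the dilation exponents.

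For the first inequality I would fix, once and for all, a function $\theta\in C_c^\infty(\R^2)$ which equals $1$ on $\cB_{\h}$, so that $\Supp\widehat a\subset 2^k\cB_{\h}$ forces $\widehat a(\xi)=\theta(2^{-k}\xi_{\h})\widehat a(\xi)$; then, with $g\eqdefa\cF^{-1}_{\h}\theta\in\mathcal{S}(\R^2)$, one has the horizontal convolution identity $\p_{\h}^\alpha a(\cdot,x_3)=\bigl(2^{2k+k|\alpha|}(\p^\alpha g)(2^k\cdot)\bigr)\ast_{\h}a(\cdot,x_3)$. Applying Young's inequality in the horizontal variables with $1+\tfrac1{p_1}=\tfrac1r+\tfrac1{p_2}$, with values in the inner space $L^{q_1}_{\v}$, and then changing variables in the kernel, the kernel norm becomes exactly $2^{k|\alpha|+2k(\frac1{p_2}-\frac1{p_1})}\|\p^\alpha g\|_{L^r}$, which is finite. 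The second inequality is the one-variable analogue in $x_3$: write $\widehat a=\tilde\theta(2^{-\ell}\xi_3)\widehat a$ with $\tilde\theta\in C_c^\infty(\R)$ equal to $1$ on $\cB_{\v}$, convolve in $x_3$ inside the inner $L^{q_1}_{\v}$ norm using the scalar Young inequality on $\R$ with $1+\tfrac1{q_1}=\tfrac1{\tilde r}+\tfrac1{q_2}$, and then take $L^{p_1}_{\h}$ norms; the kernel now contributes $2^{\ell\beta+\ell(\frac1{q_2}-\frac1{q_1})}$, the power $2$ being absent because the vertical variable is one-dimensional.

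For the third inequality I would use that $0\notin\cC_{\h}$. Choose $\chi\in C_c^\infty(\R^2\setminus\{0\})$ equal to $1$ on $\cC_{\h}$, and from the multinomial identity $(\xi_1^2+\xi_2^2)^N=\sum_{|\alpha|=N}\binom{N}{\alpha}\xi_{\h}^{2\alpha}$ build symbols $\psi_\alpha\eqdefa\binom{N}{\alpha}(-i\xi_{\h})^\alpha|\xi_{\h}|^{-2N}\chi(\xi_{\h})\in C_c^\infty(\R^2)$ satisfying $\sum_{|\alpha|=N}(i\xi_{\h})^\alpha\psi_\alpha\equiv1$ on $\cC_{\h}$. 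After the rescaling $\psi^{(k)}_\alpha(\xi_{\h})\eqdefa 2^{-kN}\psi_\alpha(2^{-k}\xi_{\h})$ one still has $\sum_{|\alpha|=N}(i\xi_{\h})^\alpha\psi^{(k)}_\alpha\equiv1$ on $2^k\cC_{\h}$, hence $a=\sum_{|\alpha|=N}\cF^{-1}_{\h}\psi^{(k)}_\alpha\ast_{\h}\p_{\h}^\alpha a$, and since $\|\cF^{-1}_{\h}\psi^{(k)}_\alpha\|_{L^1_{\h}}=2^{-kN}\|\cF^{-1}_{\h}\psi_\alpha\|_{L^1_{\h}}$ with $\cF^{-1}_{\h}\psi_\alpha\in\mathcal{S}$, Young's inequality $L^1_{\h}\ast L^{p_1}_{\h}\to L^{p_1}_{\h}$ (with values in $L^{q_1}_{\v}$) gives the $2^{-kN}$ gain. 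The fourth inequality is the vertical analogue: with $\chi_{\v}\in C_c^\infty(\R\setminus\{0\})$ equal to $1$ on $\cC_{\v}$, set $\phi\eqdefa(i\xi_3)^{-N}\chi_{\v}(\xi_3)$ and $\phi^{(\ell)}(\xi_3)\eqdefa 2^{-\ell N}\phi(2^{-\ell}\xi_3)$, so that $a=\cF^{-1}_{\v}\phi^{(\ell)}\ast_{\v}\p_3^N a$, and a scalar Young inequality in $x_3$ followed by the $L^{p_1}_{\h}$ norm yields the factor $2^{-\ell N}$.

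The only points where I would slow down are the exponent bookkeeping — making sure the powers of $2^k$ and $2^\ell$ emerge exactly as stated after each change of variables, and checking $r,\tilde r\in[1,\infty]$ so that Young applies — and the justification that Young's inequality is legitimate in the mixed-norm spaces $L^{p}_{\h}(L^{q}_{\v})$. The latter is not an obstacle: in each of the four convolutions one of the two factors is a scalar kernel on $\R^2$ (resp.\ $\R$), so the estimate reduces to the scalar Young inequality in the convolution variable combined with Minkowski's integral inequality in the remaining variable. There is no genuine difficulty here.
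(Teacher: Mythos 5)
Your proof is correct: the paper itself gives no proof of this lemma (it is recalled from \cite{BCD, CZ1, Paicu}), and your argument — cutoff/rescaled-kernel convolution identities plus Young's inequality in the convolution variable, with Minkowski handling the mixed norm, and the ring case via smooth symbols $\xi_{\h}^\alpha|\xi_{\h}|^{-2N}\chi$ resp. $(i\xi_3)^{-N}\chi_{\v}$ — is exactly the standard Bernstein-type proof used in those references, with the scaling exponents correctly accounting for the dimensions $2$ and $1$ of the horizontal and vertical variables.
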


To deal with the estimate of product of two distributions, we constantly
use the following para-differential decomposition from \cite{Bo81} in the horizontal variables: for any functions $f,g\in\mathcal{S}'(\R^3)$,
\beq \label{bony}
fg=T^\h_fg+{T}^\h_gf+R^\h(f,g),
\eeq
where
\beno\begin{aligned}
&T^\h_fg\eqdefa\sum_{k\in\Z}S^\h_{k-1}f\D^\h_k g,\quad R^\h(f,g)\eqdefa\sum_{k\in\Z}\Delta^\h_k f\tilde{\Delta}^\h_k g,\quad
\hbox{with} \quad
\wt{\Delta}^\h_k g\eqdefa\sum_{k'=k-1}^{k+1}\D^\h_{k'} g.
\end{aligned}\eeno
We also employ Bony's decomposition in the vertical variable.

\smallskip

The following technical lemmas are very useful in this context.
The first one is concerned with the commutator's estimates involving $\D^\h_k$ and $\D^v_\ell$.
Indeed it follows from
 the classic commutator's estimate from \cite{BCD} and H\"older's inequality that

\begin{lemma}\label{commutator lemma}
{\sl Let $p,q,r,s\in[1,\infty]$ which satisfy $\f{1}{p}=\f{1}{r}+\f{1}{s}$.
Then for any $f,g\in\mathcal{S}(\R^3)$, $j,k\in\Z$, there hold
\beq\label{commutator estimate}\begin{aligned}
&\|[\Delta_\ell^\v; f]g\|_{L^p_{\h}(L^q_{\v})}\lesssim 2^{-\ell}\|\na_{\h}f\|_{L^r_{\h}(L^\infty_{\v})}\|g\|_{L^s_{\h}(L^q_{\v})},\\
&\|[\Delta_k^h; f]g\|_{L^p_{\v}(L^q_{\h})}\lesssim 2^{-k}\|\na_{\h}f\|_{L^r_{\v}(L^\infty_{\h})}\|g\|_{L^s_{\v}(L^q_{\h})}.
\end{aligned}\eeq}
\end{lemma}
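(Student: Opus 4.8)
The plan is to prove Lemma~\ref{commutator lemma}, which asserts the two anisotropic commutator bounds
\beno
\|[\Delta_\ell^\v; f]g\|_{L^p_{\h}(L^q_{\v})}\lesssim 2^{-\ell}\|\na_{\h}f\|_{L^r_{\h}(L^\infty_{\v})}\|g\|_{L^s_{\h}(L^q_{\v})}
\eeno
(and its horizontal analogue), by reducing everything to the classical one-dimensional commutator estimate recorded in \cite{BCD}. The key observation is that $\Delta_\ell^\v$ acts only in the vertical variable $x_3$, so for fixed horizontal point $x_\h\in\R^2$ the operator is a convolution in $x_3$ with the rescaled kernel $h_\ell(x_3)=2^\ell h(2^\ell x_3)$, where $h=\cF^{-1}\varphi$. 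Writing
\beno
[\Delta_\ell^\v;f]g(x_\h,x_3)=\int_{\R} h_\ell(x_3-y_3)\bigl(f(x_\h,x_3)-f(x_\h,y_3)\bigr)g(x_\h,y_3)\,dy_3,
\eeno
the mean value theorem in the $x_3$ variable gives $f(x_\h,x_3)-f(x_\h,y_3)=(x_3-y_3)\int_0^1\partial_3 f(x_\h,y_3+\tau(x_3-y_3))\,d\tau$; note that the paper only needs $\na_\h f$ on the right-hand side, so in fact one should use the classical estimate that is phrased with the full gradient and then discard the vertical component — or, more precisely, observe that in the application ($f=S^\v_{\ell'-1}v^3$ with the divergence-free condition substituting $\partial_3 v^3=-\dive_\h\vv v^\h$) the relevant quantity is already horizontal. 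First I would simply quote the one-variable estimate $\|[\chi(2^{-\ell}D_3);f]g\|_{L^q(\R_{x_3})}\lesssim 2^{-\ell}\|\partial_3 f\|_{L^\infty(\R_{x_3})}\|g\|_{L^q(\R_{x_3})}$ from \cite{BCD}, applied pointwise in $x_\h$.

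The main step is then to take $L^p_\h$ norms and split using H\"older's inequality in the horizontal variable with the exponent relation $\f1p=\f1r+\f1s$. Concretely, from the pointwise-in-$x_\h$ bound
\beno
\bigl\|[\Delta_\ell^\v;f]g(x_\h,\cdot)\bigr\|_{L^q_\v}\lesssim 2^{-\ell}\|\partial_3 f(x_\h,\cdot)\|_{L^\infty_\v}\|g(x_\h,\cdot)\|_{L^q_\v},
\eeno
one applies H\"older in $x_\h$ to obtain
\beno
\|[\Delta_\ell^\v;f]g\|_{L^p_\h(L^q_\v)}\lesssim 2^{-\ell}\bigl\|\|\partial_3 f(x_\h,\cdot)\|_{L^\infty_\v}\bigr\|_{L^r_\h}\bigl\|\|g(x_\h,\cdot)\|_{L^q_\v}\bigr\|_{L^s_\h}=2^{-\ell}\|\partial_3 f\|_{L^r_\h(L^\infty_\v)}\|g\|_{L^s_\h(L^q_\v)}.
\eeno
To reach the stated form with $\na_\h f$ rather than $\partial_3 f$, one observes that in every invocation of this lemma in the body of the paper the function $f$ is a low-frequency vertical projection $S^\v_{\ell'-1}v^3$, and $\dive\vv v=0$ gives $\partial_3 v^3=-\dive_\h\vv v^\h$, so $\partial_3 S^\v_{\ell'-1}v^3=-S^\v_{\ell'-1}\dive_\h\vv v^\h$ is controlled by $\|\na_\h\vv v\|_{L^r_\h(L^\infty_\v)}$; alternatively one states the lemma with $\partial_3 f$ and notes the divergence-free substitution at the point of use. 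The vertical commutator estimate is obtained identically after exchanging the roles of the horizontal and vertical variables, using that $\Delta_k^\h$ is a convolution in $x_\h$ with kernel $2^{2k}\check\varphi(2^k\cdot)$ and applying the two-dimensional version of the classical commutator bound (again from \cite{BCD}) pointwise in $x_3$, followed by H\"older in $x_3$.

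The only point requiring a little care — and hence the part I would flag as the main obstacle, though it is minor — is making sure the one-variable (resp.\ two-variable) classical commutator estimate from \cite{BCD} is applied with the correct homogeneity so that the factor $2^{-\ell}$ (resp.\ $2^{-k}$) and the $L^\infty$ placement of the derivative come out exactly; this is a matter of tracking the rescaling of the Littlewood--Paley kernel and using $\int_\R |z|\,|h_\ell(z)|\,dz\lesssim 2^{-\ell}$. I would also remark that the measurability of $x_\h\mapsto \|\partial_3 f(x_\h,\cdot)\|_{L^\infty_\v}$ and of $x_\h\mapsto\|g(x_\h,\cdot)\|_{L^q_\v}$ is automatic since $f,g\in\mathcal{S}(\R^3)$, so the Fubini-type argument used to pass from the pointwise bound to the iterated-norm bound is justified; an approximation/density argument then extends the inequality to the generality needed in the applications. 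Once these bookkeeping points are settled, the proof is complete.
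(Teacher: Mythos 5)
Your proposal is correct and follows essentially the same route as the paper, which proves the lemma exactly as you do: the classical one-dimensional (resp.\ two-dimensional) commutator estimate of \cite{BCD}, applied via the kernel representation of $\D_\ell^\v$ (resp.\ $\D_k^\h$) pointwise in the transverse variable with the first-moment bound $\int_{\R}|z|\,|h_\ell(z)|\,dz\lesssim 2^{-\ell}$, followed by H\"older with $\f1p=\f1r+\f1s$ in the remaining variable. Your side remark is also accurate: the vertical estimate naturally comes out with $\|\p_3 f\|_{L^r_\h(L^\infty_\v)}$ rather than $\|\na_\h f\|_{L^r_\h(L^\infty_\v)}$ (as literally stated it would fail for $f$ depending only on $x_3$), and this is harmless because every invocation in the paper takes $f=S^\v_{\ell'-1}v^3$ and uses $\dive\vv v=0$ to replace $\p_3 v^3$ by $-\dive_\h\vv v^\h$, which is precisely how the bound is written in the body of the paper.
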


By virtue of Lemma \ref{L2} and the following 2D interpolation inequality
\beno
\|a\|_{L^4(\R^2)}\lesssim \|a\|_{L^2(\R^2)}^{\f12}\|\na_\h a\|_{L^2(\R^2)}^{\f12},
\eeno
we deduce that

\begin{lemma}\label{interpolation lemma}
{\sl For any $u\in\mathcal{S}(\R^3)$, there hold
\beq\label{interpolation inequality}\begin{aligned}
&\|u\|_{L^4_{\h}(L^\infty_{\v})}\lesssim\|u\|_{L^4_{\h}(\cB^{0,\f12})_{\v}}\lesssim
\|u\|_{\cB^{0,\f12}_{2,1}}^{\f12}\|\na_h u\|_{\cB^{0,\f12}_{2,1}}^{\f12},\\
&\|u\|_{L^4_{\h}(L^2_{\v})}\lesssim\|u\|_{L^2_{\v}(L^4_{\h})}\lesssim\|u\|_{L^2_{\v}(\dot{H}^{\f12}_{\h})}\lesssim\|u\|_{L^2}^{\f12}\|\na_{\h}u\|_{L^2}^{\f12},\\
&\|u\|_{L^\infty_{\v}(L^2_{\h})}\lesssim\|u\|_{L^2_{\h}(L^\infty_{\v})}
\lesssim\|u\|_{\cB^{0,\f12}_{2,1}}\lesssim\|u\|_{L^2}^{\f12}\|\p_3u\|_{L^2}^{\f12}.
\end{aligned}\eeq}
\end{lemma}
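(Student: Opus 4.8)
The plan is to prove the three displayed chains in \eqref{interpolation inequality} one at a time, using only the anisotropic Bernstein estimates of Lemma \ref{L2}, the stated two-dimensional Gagliardo--Nirenberg inequality $\|a\|_{L^4(\R^2)}\lesssim\|a\|_{L^2(\R^2)}^{\f12}\|\na_\h a\|_{L^2(\R^2)}^{\f12}$, Minkowski's inequality for mixed Lebesgue norms, and the Cauchy--Schwarz inequality on dyadic sums. The natural order is to treat the second chain first, because its conclusion, applied to each vertical dyadic block $\D_\ell^\v u$, is exactly what powers the second inequality of the first chain; the third chain is then essentially independent.

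For the second chain, the first inequality $\|u\|_{L^4_\h(L^2_\v)}\lesssim\|u\|_{L^2_\v(L^4_\h)}$ is Minkowski's inequality for mixed norms, valid because the outer exponent $4$ dominates the inner exponent $2$. For the remaining two inequalities I work at each fixed height $x_3$: the two-dimensional Gagliardo--Nirenberg inequality gives $\|u(\cdot,x_3)\|_{L^4_\h}\lesssim\|u(\cdot,x_3)\|_{L^2_\h}^{\f12}\|\na_\h u(\cdot,x_3)\|_{L^2_\h}^{\f12}$, and since this factors through the endpoint Sobolev embedding $\dH^{\f12}(\R^2)\hookrightarrow L^4(\R^2)$, the intermediate quantity $\|u\|_{L^2_\v(\dH^{\f12}_\h)}$ appears naturally. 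Taking the $L^2_\v$ norm and applying Cauchy--Schwarz in $x_3$ through $\int\|u(\cdot,x_3)\|_{L^2_\h}\|\na_\h u(\cdot,x_3)\|_{L^2_\h}\,dx_3\le\|u\|_{L^2}\|\na_\h u\|_{L^2}$ yields the final bound; equivalently $\|u\|_{L^2_\v(\dH^{\f12}_\h)}\lesssim\|u\|_{L^2}^{\f12}\|\na_\h u\|_{L^2}^{\f12}$ may be read off directly from $\|u\|_{L^2_\v(\dH^{\f12}_\h)}^2=\int_{\R^3}|\xi_\h|\,|\wh u(\xi)|^2\,d\xi$ by the Cauchy--Schwarz splitting $|\xi_\h|=1\cdot|\xi_\h|$ and Plancherel.

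For the first chain, the embedding $\|u\|_{L^4_\h(L^\infty_\v)}\lesssim\|u\|_{L^4_\h(\cB^{0,\f12})_\v}$ follows, at fixed $x_\h$, from $u=\sum_\ell\D_\ell^\v u$ and the vertical Bernstein inequality $\|\D_\ell^\v u\|_{L^\infty_\v}\lesssim2^{\f\ell2}\|\D_\ell^\v u\|_{L^2_\v}$ of Lemma \ref{L2}, after which one takes the $L^4_\h$ norm. For the second inequality I first use Minkowski's inequality to move the $L^4_\h$ norm inside the $\ell$-sum, reducing matters to $\sum_\ell2^{\f\ell2}\|\D_\ell^\v u\|_{L^4_\h(L^2_\v)}$; I then estimate each block by the second chain, namely $\|\D_\ell^\v u\|_{L^4_\h(L^2_\v)}\lesssim\|\D_\ell^\v u\|_{L^2}^{\f12}\|\na_\h\D_\ell^\v u\|_{L^2}^{\f12}$, and finally factor $2^{\f\ell2}=(2^{\f\ell2})^{\f12}(2^{\f\ell2})^{\f12}$ and apply Cauchy--Schwarz in $\ell$. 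Since $\na_\h$ commutes with $\D_\ell^\v$, the two resulting sums are precisely $\|u\|_{\cB^{0,\f12}}$ and $\|\na_\h u\|_{\cB^{0,\f12}}$, giving the claimed bound.

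For the third chain, $\|u\|_{L^\infty_\v(L^2_\h)}\lesssim\|u\|_{L^2_\h(L^\infty_\v)}$ is once more Minkowski's inequality, now with outer exponent $\infty\ge2$; the inequality $\|u\|_{L^2_\h(L^\infty_\v)}\lesssim\|u\|_{\cB^{0,\f12}}$ follows from the same vertical Bernstein estimate combined with Minkowski on the $\ell$-sum; and the final inequality $\|u\|_{\cB^{0,\f12}}\lesssim\|u\|_{L^2}^{\f12}\|\p_3u\|_{L^2}^{\f12}$ follows by writing $2^{\f\ell2}\|\D_\ell^\v u\|_{L^2}=\|\D_\ell^\v u\|_{L^2}^{\f12}\bigl(2^\ell\|\D_\ell^\v u\|_{L^2}\bigr)^{\f12}$, using $2^\ell\|\D_\ell^\v u\|_{L^2}\sim\|\D_\ell^\v\p_3u\|_{L^2}$ (vertical Bernstein), Cauchy--Schwarz in $\ell$, and the almost-orthogonality identities $\sum_\ell\|\D_\ell^\v u\|_{L^2}^2\sim\|u\|_{L^2}^2$ and $\sum_\ell\|\D_\ell^\v\p_3u\|_{L^2}^2\sim\|\p_3u\|_{L^2}^2$. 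I do not expect a genuine obstacle: the entire content is bookkeeping, and the only points demanding care are invoking Minkowski's inequality in the correct direction for each mixed norm (which fixes the order in which the chains are proved) and verifying that each dyadic block carries the Fourier support required by the relevant estimate in Lemma \ref{L2}.
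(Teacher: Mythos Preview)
Your approach is exactly what the paper intends: the paper merely asserts that the lemma follows from Lemma~\ref{L2} and the two-dimensional Gagliardo--Nirenberg inequality, and your proposal supplies those details. The proofs of the first two chains are correct.

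There is, however, a genuine gap in your argument for the last inequality of the third chain, $\|u\|_{\cB^{0,\f12}}\lesssim\|u\|_{L^2}^{\f12}\|\p_3u\|_{L^2}^{\f12}$. After writing $2^{\f\ell2}\|\D_\ell^\v u\|_{L^2}\sim\|\D_\ell^\v u\|_{L^2}^{\f12}\|\D_\ell^\v\p_3u\|_{L^2}^{\f12}$, Cauchy--Schwarz in $\ell$ gives
\[
\sum_\ell \|\D_\ell^\v u\|_{L^2}^{\f12}\|\D_\ell^\v\p_3u\|_{L^2}^{\f12}
\le\Bigl(\sum_\ell\|\D_\ell^\v u\|_{L^2}\Bigr)^{\f12}\Bigl(\sum_\ell\|\D_\ell^\v\p_3u\|_{L^2}\Bigr)^{\f12},
\]
which are $\ell^1$ sums, not the $\ell^2$ sums appearing in the almost-orthogonality identities you then invoke. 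What you actually obtain are the $B^{0}_{2,1}$ norms of $u$ and $\p_3u$, and these are \emph{not} controlled by their $L^2$ norms. The standard fix is to split the sum at a threshold $N$: for $\ell\le N$, Cauchy--Schwarz against the geometric weight $2^{\ell/2}$ gives $\sum_{\ell\le N}2^{\f\ell2}\|\D_\ell^\v u\|_{L^2}\lesssim 2^{\f N2}\|u\|_{L^2}$; for $\ell>N$, write $2^{\f\ell2}=2^{-\f\ell2}\cdot 2^\ell$ and apply Cauchy--Schwarz against $2^{-\ell/2}$ to get $\sum_{\ell>N}2^{\f\ell2}\|\D_\ell^\v u\|_{L^2}\lesssim 2^{-\f N2}\|\p_3u\|_{L^2}$. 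Optimising over $N$ then yields the claimed bound.
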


\vspace{0.5cm}
\noindent {\bf Acknowledgments.} Li Xu is supported by NSF of China under grant 11671383.  Ping Zhang is partially supported by K. C. Wong Education Foundation and NSF of China under Grants   11731007, 12031006 and 11688101.

\end{document}